\def\beq{\begin{equation}}
\def\eeq{\end{equation}}
\def\ba{\begin{array}}
\def\ea{\end{array}}
\newenvironment{abs}{\textbf{Abstract}\mbox{  }}{ }
\newenvironment{key words}{\textbf{Keywords}\mbox{  }}{ }
\newtheorem{thm}{Theorem}[section]
\newtheorem{lm}[thm]{Lemma}
\newtheorem{prop}[thm]{Proposition}
\newtheorem{crl}[thm]{Corollary}
\theoremstyle{definition}
\renewenvironment{proof}{\noindent{\textbf{Proof.}}}{\hfill$\Box$}
\newtheorem{rem}[thm]{Remark}
\theoremstyle{remark}
\numberwithin{equation}{section}
\begin{document}
\pagestyle{plain} 




\title{Sharp Hardy-Littlewood-Sobolev inequality on the upper half space}
\author  {Jingbo Dou and Meijun Zhu}
\address{Jingbo Dou, School of Statistics, Xi'an University of Finance and
Economics, Xi'an, Shaanxi, 710100, China and Department of Mathematics, The University of Oklahoma, Norman, OK 73019, USA}
\email {jbdou@xaufe.edu.cn}

\address{ Meijun Zhu, Department of Mathematics,
The University of Oklahoma, Norman, OK 73019, USA }
\email { mzhu@ou.edu}

\maketitle

\begin{abs}
There are at least two directions concerning the extension of classical sharp Hardy-Littlewood-Sobolev inequality: (1) Extending the sharp inequality on general manifolds; (2) Extending it for the negative exponent $\lambda=n-\alpha$ (that is for the case of $\alpha>n$).
In this paper we confirm the possibility for the extension along the first direction by establishing the sharp
Hardy-Littlewood-Sobolev inequality on the upper half space (which is conformally equivalent to a ball).   The existences of extremal
functions are obtained; And for certain range of the exponent, we classify all extremal functions
via the method of moving sphere.
\end{abs}

\smallskip

\begin{key words}Hardy-Littlewood-Sobolev inequality; Conformal Laplacian operator; Green's  function;  Sharp constant;
Moving sphere method
\end{key words}\\
\textit{\bf Mathematics Subject Classification(2010).}  	35A23, 42B37

\section{Introduction\label{Section 1}}
The classical sharp Hardy-Littlewood-Sobolev (HLS) inequality (\cite{HL1928, HL1930, So1963, Lieb1983}) states that
\begin{equation}\label{class-HLS}
|\int \int f(x)|x-y|^{-(n-\alpha)} g(y) dx dy|\le N(p,\alpha, n)||f||_p||g||_t
\end{equation}
for all $f\in L^p(\mathbb{R}^n), \, g\in L^t(\mathbb{R}^n),$ $1<p, \, t<\infty$, $0<\alpha <n$ and
$
1/p +1 /t+ (n-\alpha)/n=2.$
Lieb \cite{Lieb1983} proved the existence of the extremal function to the inequality with sharp constant, and computed the best constant in the case of $t=p$. The sharp HLS inequality implies Moser-Trudinger-Onofri and Beckner inequalities \cite{B1993}, as well as Gross's logarithmic Sobolev inequality \cite{Gr}. All these inequalities play significant roles in solving global geometric problems, such as Yamabe problem, Ricci flow problem, etc. Besides the recent extension of the sharp HLS on the Heisenberg group by Frank and Lieb \cite{FL2012},
there are at least two directions concerning the extension of the above sharp HLS inequality: (1) Extending the sharp inequality on general manifolds; (2) Extending it for the negative exponent $\lambda=n-\alpha$ (that is for the case of $\alpha>n$). In this paper, we study the extension of sharp HLS to the upper half space $\mathbb{R}^n.$

We start with manifolds with boundary. One of the simplest manifolds with boundary is the upper half space, or under the conformal equivalence, a ball in $\mathbb{R}^n$. By introducing an extension operator, we establish the sharp
HLS inequality on the upper half space, and prove the existences of extremal
functions; For certain exponent, we classify all extremal functions
via the method of moving sphere, which was introduced in early work of Li and Zhu \cite{LZ1995}. The current work builds a solid foundation for extending the classical sharp HLS on general manifolds. We shall outline (without proof) the general approach for such extensions later in this introduction.

\subsection{Singular integral operator on the upper half space}
Let $f(x)\in C^2_0(\mathbb{R}^n)$ for $n\ge 3$. The pointwise defined potential equation
$$
-\Delta u=f
$$
is equivalent to, up to a harmonic function,  the following (globally defined) integral equation:
$$
u(x)=\frac1{n(n-2)\omega_n}\int_{\mathbb{R}^n}\frac{f(y)}{|x-y|^{n-2}}dy,
$$
where, and throughout the paper, $\omega_n$ denotes the volume of the $n-$dimensional unit ball.   Generally, for all $\alpha>0$, a
positive solution $u(x)\in H^{\alpha/2}(\mathbb{R}^n)$ to
\begin{equation}\label{1-3}
(-\Delta)^{\frac \alpha 2} u=f, \ \ \ u\in H^{\alpha/2}(\mathbb{R}^n)
\end{equation}
in the distribution sense is given by
$$
\int_{\mathbb{R}^n}(-\Delta)^{\frac \alpha 4} u(x) (-\Delta)^{\frac \alpha 4} \phi(x) dx =\int_{\mathbb{R}^n} \phi(x) f(x) dx,
$$
 for all $\phi(x)\in C^\infty_0(\mathbb{R}^n)$, where $$\int_{\mathbb{R}^n}(-\Delta)^{\frac \alpha 4} u(x)
(-\Delta)^{\frac \alpha 4} \phi(x) dx =\int_{\mathbb{R}^n}|\xi|^{\alpha} \widehat{u}(\xi) \overline{\widehat{\phi}}(\xi)  d\xi.$$ If $\alpha$
is not an even number, equation (\ref{1-3}) is globally defined. It is also known that it
 is equivalent to the  integral equation (see, e.g., Stein \cite{Stein} on $P_{117}$)
\begin{equation}\label{1-4}
u(x)=\frac1{c(n,\alpha) }\int_{\mathbb{R}^n}\frac{f(y)}{|x-y|^{n-\alpha}}dy,
\end{equation} where $c(n,\alpha)=\pi^\frac n2 2^\alpha\Gamma(\frac \alpha2)/\Gamma(\frac{n-\alpha}2).$ See other related work in Chen and Li
\cite{CL2011}.

For $f=u^{\frac{n+\alpha}{n-\alpha}}$, equation (\ref{1-4}) is also  satisfied by the extremal functions to certain sharp
HLS inequality for the singular integral operator
$$ I_\alpha
f(x)=\frac1{c(n,\alpha)}\int_{\mathbb{R}^n}\frac{f(y)}{|x-y|^{n-\alpha}}dy.
$$
see, e.g., \cite{Lieb1983, CLO2006, Li2004}.



Parallel to the potential equation in the whole space, we consider the Laplacian equation on the upper half space with Neumann boundary
condition. Let $f(y)\in C_0^2(\mathbb{R}^{n-1})$ for $n\ge 3$.
The pointwise defined partial differential equation
$$
\begin{cases}
-\Delta u(x', x_n)=0, ~~~&\mbox{for\ }~ x_n>0 ~\mbox{and\ } x' \in \mathbb{R}^{n-1},\\
u_{x_n}(x', 0) =-f(x'), ~~ &\mbox{for \ }~ x' \in \mathbb{R}^{n-1}
\end{cases}
$$
is equivalent to, up to a harmonic function and a constant multiplier, the following  integral equation:
\begin{equation}\label{1-6}
u(x)=\int_{\partial \mathbb{R}^n_+}\frac{f(y)}{|x-y|^{n-2}}dy, \quad ~~~~~~~~~~\forall x=(x',x_n)\in\mathbb{R}^n_+,
\end{equation}
where and throughout the paper,  $|x-y|=\sqrt{|x'-y|^2+x_n^2}$ for $x=(x', x_n)\in \mathbb{R}^n_+$ and $y \in
\partial \mathbb{R}^n_+ =\mathbb{R}^{n-1}.$
Equation (\ref{1-6}) can also be viewed as another type of harmonic extensions of $f(y)$.

Generally, for $\alpha \in (1, n)$, we can introduce an extension operator for $f(y)\in C_0^\infty(\mathbb{R}^{n-1})$ as
\begin{equation}\label{1-7}
E_\alpha f(x)=\int_{\partial \mathbb{R}^n_+}\frac{f(y)}{|x-y|^{n-\alpha}}dy, \quad ~~~~~~~~~~\forall x=(x',x_n)\in\mathbb{R}^n_+.
\end{equation}
It will be clear that the above extension is in fact a Laplacian type extension operator for even number $\alpha$ (see Proposition
\ref{prop1-1} in Section \ref{Section 5}). Other properties about this extension and its relation to fractional Laplacian operator will be
discussed in Section \ref{Section 5}.

In the meantime, we  consider the dual operator $R_\alpha$ for $g(x)\in C^\infty(\mathbb{R}_+^{n})$ with compact set in $B_R(0)$ for
large $R$:
\begin{equation}\label{1-9}
R_\alpha g(y)=\int_{\mathbb{R}_+^{n}} \frac{g(x)}{|x-y|^{n-\alpha}}dx, \quad ~~~~~~~~~~\forall x=(x',x_n)\in\mathbb{R}^n_+, \,
y\in\partial\mathbb{R}^n_+.
\end{equation}
$R_\alpha$ can be viewed as a restriction operator.

The main goal of this paper is to establish the sharp HLS type inequalities for operators $E_\alpha$ and
$R_\alpha$.

\begin{thm}\label{HLSD-theo}
 For any  $1<\alpha<n, 1< p<\frac {n-1}{\alpha-1}$, and
 \begin{equation}\label{pq}
 \frac 1q=\frac{n-1}n(\frac1p-\frac{\alpha-1}{n-1}),
\end{equation}
there is a best constant $C_e(n,\alpha,p)>0$ depending on $n,  \ \alpha  $ and $p$, such that
\begin{equation}\label{HLSD-2}
\|E_\alpha f\|_{L^q( \mathbb{R}^n_+)}\le C_e(n,\alpha,p)\|f\|_{L^p(
\partial\mathbb{R}^n_+)},
\end{equation}
and the equality holds for certain extremal functions. Moreover, all extremal functions are radially symmetric (with respect to some
points).

\end{thm}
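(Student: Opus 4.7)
The plan is to establish the theorem in three steps: the boundedness of $E_\alpha$, the existence of an extremal, and the radial symmetry of any extremal.

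I would first pass to the dual bilinear formulation: by H\"older's inequality, (\ref{HLSD-2}) is equivalent to the boundedness of
\[
J(f,g):=\int_{\mathbb{R}^n_+}\!\!\int_{\partial\mathbb{R}^n_+}\frac{f(y)\,g(x)}{|x-y|^{n-\alpha}}\,dy\,dx
\]
on $L^p(\partial\mathbb{R}^n_+)\times L^{q'}(\mathbb{R}^n_+)$ with $1/q+1/q'=1$, and a direct dilation check confirms that (\ref{pq}) is exactly the scaling-invariance relation for $J$ under $(x,y)\mapsto(\lambda x,\lambda y)$. To prove the bilinear bound, I would emulate the proof of the classical Hardy--Littlewood--Sobolev inequality: decompose the mixed-domain kernel $|x-y|^{-(n-\alpha)}$ dyadically in $|x-y|$, combine a Hedberg-type pointwise bound by a maximal function with a weak-type endpoint estimate, and upgrade to strong type by Marcinkiewicz interpolation. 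Equivalently, one may view $f(y)\,dy$ as a Radon measure on $\mathbb{R}^n$ supported on the hyperplane $\partial\mathbb{R}^n_+$ and invoke the trace-capacity theory for Riesz potentials of Adams and Maz'ya. Either route gives $C_e(n,\alpha,p)<\infty$, so the sharp constant is well defined.

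Next I would produce a maximizer by concentration-compactness. The functional $f\mapsto\|E_\alpha f\|_q/\|f\|_p$ is invariant under boundary translations $f(y)\mapsto f(y-y_0)$ and dilations $f(y)\mapsto\lambda^{(n-1)/p}f(\lambda y)$, so compactness must be recovered modulo these two symmetries. Applying the Riesz--Sobolev rearrangement inequality in the boundary variable $y\in\mathbb{R}^{n-1}$, together with the slicewise rearrangement in $x'$ at each height $x_n>0$ --- permissible because $|x-y|^{-(n-\alpha)}$ is symmetric-decreasing in $x'-y$ for every fixed $x_n$ --- replaces any maximizing sequence $\{f_k\}$ by a radially symmetric-decreasing one without decreasing $\|E_\alpha f_k\|_q$, eliminating the translation symmetry. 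A subsequent dilation normalization, for instance fixing $\int_{\{|y|<1\}}f_k^p=1/2$, eliminates the scaling freedom. I expect the principal obstacle to be ruling out loss of mass at $0$ or at $\infty$ for this tuned radial sequence; I would handle this via a dichotomy argument, showing that any nontrivial split would contradict the scale/translation normalization imposed on $\{f_k\}$. The remaining subsequence then converges weakly in $L^p$ to a nonzero $f_\infty$, which attains the supremum by lower semicontinuity and the smoothing effect of $E_\alpha$ on radial sequences.

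Finally, for any extremal $f$, the associated optimal dual function $g=c\,(E_\alpha f)^{q-1}$ must force equality in the rearrangement step $J(f,g)\le J(f^*,g^*)$, where $f^*$ denotes the symmetric-decreasing rearrangement on $\partial\mathbb{R}^n_+$ and $g^*$ the slicewise symmetric-decreasing rearrangement on $\mathbb{R}^n_+$. Since $|x-y|^{-(n-\alpha)}$ is strictly radially decreasing, the equality-case theorem for the Riesz rearrangement inequality (Lieb, Burchard) implies that $f$ is a translate of $f^*$, i.e., $f$ is radially symmetric about some point of $\partial\mathbb{R}^n_+$, which is precisely the claim.
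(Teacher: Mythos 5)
Your three-step architecture (weak-type estimate plus interpolation, rearrangement plus dilation normalization plus a missing-mass argument, and the strict rearrangement equality case for symmetry) is exactly the paper's, and each step is sound in principle. Two comparative remarks. First, for boundedness the paper does not pass to the bilinear form or use a Hedberg-type maximal bound: it proves the weak-type estimate directly by splitting the kernel at a radius $r$, estimating the near part in $L^p$ via a Young-type inequality on the half space (Lemma \ref{Young ineq}) and the far part in $L^\infty$ via H\"older, then optimizing over $r$; your dyadic/Hedberg or Adams--Maz'ya routes would also work but are heavier machinery for the same weak-type endpoint. Second, and more importantly, the place where your sketch is thin is exactly the non-vanishing of the tuned radial maximizing sequence. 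Your normalization $\int_{\{|y|<1\}}f_k^p=1/2$ does not by itself forbid a dichotomy in which half the mass concentrates at the origin and half spreads to infinity (both compatible, for radial decreasing functions, with half the $L^p$ mass sitting in the unit ball), so your ``dichotomy argument'' would still need a strict-subadditivity lemma. The paper sidesteps this with a cleaner device: it upgrades the weak-type bound to the Lorentz estimate $\|E_\alpha f\|_{L^q}\le C\|f\|_{L^{p,q}}$, interpolates $\|f\|_{L^{p,q}}\le\|f\|_{L^{p,p}}^{\theta}\|f\|_{L^{p,\infty}}^{1-\theta}$ to get a positive lower bound on $\sup_{\lambda>0}\lambda^{-(n-1)/p}f_j(e_1'/\lambda)$ for any maximizing sequence, and then chooses the dilation so that $f_j(e_1')\ge c>0$; together with the pointwise bound $f_j(y)\le C|y|^{-(n-1)/p}$ this forces a nontrivial a.e.\ limit, after which the Brezis--Lieb lemma and $q>p$ finish the existence proof. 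If you flesh out your existence step, I would recommend adopting that Lorentz-refinement mechanism rather than a general concentration-compactness dichotomy. The symmetry step via the Lieb/Burchard equality case, applied slicewise in $x_n$ where the kernel is strictly symmetric decreasing in $x'-y$, matches the paper's use of Lieb's 1977 result.
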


Note that inequality (\ref{HLSD-2}) is equivalent to the following  HLS inequality.
\begin{thm}\label{HLSB}({\bf HLS inequality on the upper half space}).  For any $1<\alpha<n$, and $1<t, \, p<\infty$ satisfying
\begin{equation}\label{mj-7-13}
\frac{n-1}n \cdot \frac 1p +\frac 1t+\frac{n-\alpha+1}n=2,
\end{equation}
the following sharp inequality holds for all $f\in L^p(\partial \mathbb{R}_+^n), \, g\in L^t(\mathbb{R}_+^n):$
\begin{equation}\label{HLSD-O}
\int_{\mathbb{R}^n_+}\int_{\partial\mathbb{R}^n_+}\frac{g(x) f(y)}{|x-y|^{n-\alpha}}dydx\le C_e (n,\alpha,p)\|f\|_{L^p(\partial\mathbb{R}^n_+)}\|g\|_{L^t(\mathbb{R}^n_+)}.
\end{equation}
\end{thm}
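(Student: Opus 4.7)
The plan is to derive Theorem \ref{HLSB} from Theorem \ref{HLSD-theo} by a standard duality argument. First I would recognize that the bilinear expression on the left of (\ref{HLSD-O}) is nothing but the $L^2$ pairing of $g$ with $E_\alpha f$; indeed by Fubini and the definition (\ref{1-7}),
\begin{equation*}
\int_{\mathbb{R}^n_+}\int_{\partial\mathbb{R}^n_+}\frac{g(x) f(y)}{|x-y|^{n-\alpha}}\,dy\,dx
= \int_{\mathbb{R}^n_+} g(x)\, E_\alpha f(x)\, dx.
\end{equation*}
Applying H\"older's inequality to the right-hand side with conjugate exponents $q$ and $q'$ majorizes this by $\|g\|_{L^{q'}(\mathbb{R}^n_+)} \|E_\alpha f\|_{L^q(\mathbb{R}^n_+)}$, and then Theorem \ref{HLSD-theo} controls the second factor by $C_e(n,\alpha,p)\|f\|_{L^p(\partial\mathbb{R}^n_+)}$.

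The one piece of bookkeeping is to check that the exponent $t$ appearing in the scaling identity (\ref{mj-7-13}) is precisely the H\"older conjugate of the $q$ given by (\ref{pq}). A short computation shows
\begin{equation*}
\frac{1}{q} = \frac{n-1}{np} - \frac{\alpha-1}{n}, \qquad \frac{1}{t} = 2 - \frac{n-1}{np} - \frac{n-\alpha+1}{n} = \frac{n+\alpha-1}{n} - \frac{n-1}{np},
\end{equation*}
so $1/q + 1/t = 1$, as required. The constraint $1 < p < (n-1)/(\alpha-1)$ from Theorem \ref{HLSD-theo} forces $1 < q < \infty$, which is equivalent to $1 < t < \infty$, matching the hypothesis of Theorem \ref{HLSB}.

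To obtain sharpness of the constant, I would run the converse direction: for fixed $f \in L^p(\partial \mathbb{R}^n_+)$ with $E_\alpha f \not\equiv 0$, taking the supremum of the bilinear form over $g \in L^t(\mathbb{R}^n_+)$ with $\|g\|_{L^t} = 1$ recovers exactly $\|E_\alpha f\|_{L^q(\mathbb{R}^n_+)}$ by the $L^q$-$L^{q'}$ duality. Hence the best constant in (\ref{HLSD-O}) coincides with the best constant $C_e(n,\alpha,p)$ in (\ref{HLSD-2}), and an extremal pair $(f,g)$ for (\ref{HLSD-O}) corresponds to an extremal $f$ for (\ref{HLSD-2}) paired with $g = c \, (E_\alpha f)^{q-1}\, \operatorname{sign}(E_\alpha f)$ for a suitable normalization constant $c$.

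There is no real obstacle here: the equivalence of (\ref{HLSD-2}) and (\ref{HLSD-O}) is a formal consequence of H\"older duality once the exponent arithmetic is verified, and the substantive analytic content (existence of a sharp constant, positivity, extremals) is entirely packaged in Theorem \ref{HLSD-theo}.
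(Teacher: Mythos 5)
Your argument is correct and is exactly the duality the paper invokes: Theorem \ref{HLSB} is stated there as an immediate equivalence with Theorem \ref{HLSD-theo} via H\"older/$L^q$--$L^{q'}$ duality, with the same exponent bookkeeping showing $t=q'$ and that the hypothesis $1<t<\infty$ together with \eqref{mj-7-13} is equivalent to $1<p<(n-1)/(\alpha-1)$. Nothing further is needed.
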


This yields the sharp inequality for $R_\alpha$.

\begin{crl}\label{HLSD-theo-dual} For $1<\alpha<n$, $1<t<\frac {n}{\alpha}$, and
\begin{equation}\label{pq1}
\frac{1}q=\frac{n}{n-1}\big(\frac1t-\frac{\alpha}n\big),
\end{equation}
there is a best constant $C_r(n,\alpha,t)>0$ depending on $n,  \ \alpha  $ and $t$, such that
\begin{equation}\label{HLSD-3}
\|R_\alpha g\|_{L^{q}(\partial\mathbb{R}^n_+)}\le C_r(n,\alpha,t)\|g\|_{L^t(\mathbb{R}^n_+)},
\end{equation}
and the equality holds for certain extremal functions. Moreover,
$C_r(n,\alpha,t)=C_e(n,\alpha,p),$ where $p$ is given via \eqref{mj-7-13}.
\end{crl}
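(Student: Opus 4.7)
The plan is to deduce Corollary \ref{HLSD-theo-dual} from Theorem \ref{HLSB} by pure duality, exploiting that $E_\alpha$ and $R_\alpha$ are formal adjoints with respect to the bilinear form in \eqref{HLSD-O}. First I would record the adjoint identity: by Fubini,
\[
\int_{\mathbb{R}^n_+} g(x)\, E_\alpha f(x)\, dx \;=\; \int_{\partial\mathbb{R}^n_+} f(y)\, R_\alpha g(y)\, dy \;=\; \int_{\mathbb{R}^n_+}\!\int_{\partial\mathbb{R}^n_+} \frac{g(x)\, f(y)}{|x-y|^{n-\alpha}}\, dy\, dx
\]
for admissible $f,g$. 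Next I would verify the exponent arithmetic: given $t,q$ related by \eqref{pq1} and $p$ determined from $t$ via \eqref{mj-7-13}, a one-line computation yields $1/p+1/q=1$, so $p$ and $q$ are Hölder conjugate. Moreover, the hypothesis $1<t<n/\alpha$ corresponds exactly to $1<p<(n-1)/(\alpha-1)$, which is the admissible range in Theorem \ref{HLSB}.

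With these identifications, Riesz duality on $\partial\mathbb{R}^n_+$ gives
\[
\|R_\alpha g\|_{L^q(\partial\mathbb{R}^n_+)} \;=\; \sup_{\|f\|_{L^p(\partial\mathbb{R}^n_+)}\le 1} \int_{\partial\mathbb{R}^n_+} f(y)\, R_\alpha g(y)\, dy,
\]
and, after substituting the adjoint identity, Theorem \ref{HLSB} bounds the right-hand side by $C_e(n,\alpha,p)\|g\|_{L^t}$. This proves \eqref{HLSD-3} with constant at most $C_e$. The opposite inequality $C_r(n,\alpha,t)\ge C_e(n,\alpha,p)$ is obtained by running the same duality in reverse: starting from \eqref{HLSD-3} with constant $C_r$, Hölder on $\partial\mathbb{R}^n_+$ together with the adjoint identity recovers \eqref{HLSD-2} with constant $C_r$, and the sharpness asserted in Theorem \ref{HLSD-theo} forces $C_r\ge C_e$. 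Hence $C_r(n,\alpha,t)=C_e(n,\alpha,p)$.

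Finally, for the existence of an extremal of $R_\alpha$: the extremal pair $(f^*,g^*)$ supplied by Theorem \ref{HLSB} (equivalently by Theorem \ref{HLSD-theo}) also saturates the Hölder step in the duality above, so $g^*$ is itself an extremal for \eqref{HLSD-3}, and one can read off explicit extremizers of $R_\alpha$ from those of $E_\alpha$. There is no real analytic obstacle here — all substantive content is already carried by Theorem \ref{HLSB} — the work is essentially exponent bookkeeping together with a careful execution of the duality in both directions.
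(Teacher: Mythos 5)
Your proposal is correct and is exactly the route the paper takes implicitly: the paper derives Corollary \ref{HLSD-theo-dual} from Theorem \ref{HLSB} (itself equivalent to Theorem \ref{HLSD-theo}) by the same Fubini/duality identity, and your exponent checks ($1/p+1/q=1$, and $1<t<n/\alpha$ matching $1<p<(n-1)/(\alpha-1)$) are accurate. The only point worth spelling out slightly more is the extremal: one should take $g^*=c\,(E_\alpha f^*)^{Q-1}$ with $Q$ the conjugate of $t$, which saturates the Hölder step and hence extremizes \eqref{HLSD-3}, but this is the standard completion of the argument you sketch.
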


The best constants in (\ref{HLSD-2}) can be classified as
\[ C_e(n,\alpha,p)=\sup\{\|E_\alpha f\|_{L^q(\mathbb{R}^n_+)}\,:\, \,\,\|f\|_{L^p( \partial\mathbb{R}^n_+)}=1\}.
\]
The extremal functions to inequality \eqref{HLSD-2}, up to a positive constant multiplier, satisfy the following integral equation:
\begin{equation}\label{equ-1}
f^{p-1}(y)=\int_{\mathbb{R}^n_+} \frac{(E_\alpha f(x))^{q-1}}{|x-y|^{n-\alpha}}dx,\quad\quad \forall\, y\in\partial\mathbb{R}^n_+.
\end{equation}
Using the method of moving sphere, we are able to classify all positive solutions to the above equation for certain power $p$, and obtain the precise value for the best constant.

\begin{thm}\label{Classification}
For $1<\alpha<n$,  $ p=\frac{2(n-1)}{n+\alpha-2}$, and $q=\frac{2n}{n-\alpha}$, if $f\in
L^\frac{2(n-1)}{n+\alpha-2}_{loc}(\partial\mathbb{R}^n_+)$  is a positive solution to equation \eqref{equ-1}, then  $f\in
C^\infty(\partial\mathbb{R}^n_+),$ and must be the form of
\begin{equation*}
f(y)= c(n,\alpha)\big(\frac{1}{|y-y_0|^2+d^2}\big)^\frac{n+\alpha-2}2,\quad\quad \forall \, y\in\partial\mathbb{R}^n_+
\end{equation*}  for some constants $c(n,\alpha)$,  $d>0,$ and  $y_0\in\partial\mathbb{R}^n_+.$  Thus,
$
C_e(n,\alpha,p)
$ can be computed explicitly, in particular, for $\alpha=2$,
$$C_e(n,2,\frac{2(n-1)}{n})=n^{\frac{n-2}{2(n-1)}}\omega_{n}^{1-\frac1n-\frac{1}{2(n-1)}}.$$


\end{thm}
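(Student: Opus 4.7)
My strategy is to combine a bootstrap regularity argument with the method of moving spheres applied to the coupled system induced by \eqref{equ-1}. Setting $u(x) := E_\alpha f(x)$, equation \eqref{equ-1} decouples into the pair
\begin{equation*}
u(x) = \int_{\partial\mathbb{R}^n_+} \frac{f(y)}{|x-y|^{n-\alpha}}\,dy, \qquad f^{p-1}(y) = \int_{\mathbb{R}^n_+} \frac{u^{q-1}(x)}{|x-y|^{n-\alpha}}\,dx.
\end{equation*}
Starting from the sole assumption $f \in L^{p}_{loc}(\partial\mathbb{R}^n_+)$ with $p = 2(n-1)/(n+\alpha-2)$, a Riesz-potential bootstrap that alternates the two equations, using the smoothing of $I_{n-\alpha}$ together with the HLS inequalities of Theorem \ref{HLSD-theo} and Corollary \ref{HLSD-theo-dual} to upgrade integrability, should yield $f, u \in C^\infty$ on their respective domains.

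For the classification itself, fix $y_0 \in \partial\mathbb{R}^n_+$ and $\lambda > 0$, and introduce the Kelvin-type transforms
\begin{equation*}
f_{y_0,\lambda}(y) = \Bigl(\frac{\lambda}{|y-y_0|}\Bigr)^{n+\alpha-2} f(y^*), \qquad u_{y_0,\lambda}(x) = \Bigl(\frac{\lambda}{|x-y_0|}\Bigr)^{n-\alpha} u(x^*),
\end{equation*}
where $z^* := y_0 + \lambda^2(z-y_0)/|z-y_0|^2$; the exponents are dictated precisely by the critical scaling. Since $y_0$ lies on the boundary, inversion preserves both $\mathbb{R}^n_+$ and $\partial\mathbb{R}^n_+$, and a change of variables (together with $|x-y|^2 = \lambda^{-4}|x-y_0|^2|y-y_0|^2 |x^*-y^*|^2$) confirms that $(f_{y_0,\lambda}, u_{y_0,\lambda})$ satisfies the same coupled system. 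I then compare $f$ with $f_{y_0,\lambda}$ on $\{|y-y_0| \ge \lambda\}$ and $u$ with $u_{y_0,\lambda}$ on $\{|x-y_0| \ge \lambda\}$: subtracting the two copies of the system produces integral inequalities for the negative parts whose kernels are controlled by HLS-type bounds on the inverted region $\{|y-y_0| < \lambda\}$. For $\lambda$ small enough, the integrals vanish to leading order, so both comparisons $f \ge f_{y_0,\lambda}$ and $u \ge u_{y_0,\lambda}$ hold on their exterior regions. Enlarging $\lambda$ to the critical value $\lambda_0(y_0)$, a standard dichotomy shows that either $\lambda_0(y_0) = \infty$ for some $y_0$ (in which case the induced growth of $f$ contradicts $f \in L^{p}_{loc}$), or $\lambda_0(y_0) < \infty$ and $f \equiv f_{y_0,\lambda_0(y_0)}$.

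Once the latter identity holds for every $y_0 \in \partial\mathbb{R}^n_+$, the calculus lemma of Li-Zhu \cite{LZ1995} converts the assertion \emph{``$f$ equals its own Kelvin transform around every point''} into the explicit classification $f(y) = c(n,\alpha)(|y-y_0|^2 + d^2)^{-(n+\alpha-2)/2}$. The sharp constant $C_e(n,\alpha,p)$ is then recovered by substituting this extremal family back into \eqref{HLSD-2}; in the special case $\alpha=2$, the resulting Beta-function integrals collapse, via the volume of $\mathbb{S}^{n-1}$ and Lieb's original computation, to $n^{(n-2)/(2(n-1))}\omega_n^{1-1/n-1/(2(n-1))}$. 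The main obstacle I anticipate is the simultaneous propagation of the two comparisons during the moving-sphere step: the inequality for $(f-f_{y_0,\lambda})_-$ is driven by the difference $u^{q-1}-u_{y_0,\lambda}^{q-1}$ and the inequality for $(u-u_{y_0,\lambda})_-$ by $f^{p-1}-f_{y_0,\lambda}^{p-1}$, so one must close a coupled HLS-type integral estimate on the inverted region where every exponent is critical. In particular, the \emph{start} of the moving sphere (securing the comparisons unconditionally for all small $\lambda$), rather than the rigidity at $\lambda_0$, is where the coupling is most delicate.
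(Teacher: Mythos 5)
Your overall route --- bootstrap regularity for the coupled system, then the method of moving spheres applied simultaneously to $f$ and its extension, closed by the Li--Zhu / Li--Nirenberg calculus lemma --- is exactly the paper's (Sections \ref{Section 3} and \ref{Section 4}), down to the Kelvin exponents $n+\alpha-2$ for $f$ and $n-\alpha$ for $E_\alpha f$, and your identification of the \emph{start} of the sphere as the step requiring a coupled HLS estimate on the inverted region matches Lemma \ref{K-lm-1}. One remark on the constant: rather than substituting the extremal family directly into \eqref{HLSD-2} and evaluating Beta integrals, the paper conformally transplants the inequality to a ball, where the extremal becomes the constant function on the boundary sphere; for $\alpha=2$ its extension is harmonic with constant boundary data, hence identically $n\omega_n$, which yields \eqref{sharpC1} with no integral to evaluate. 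Your direct route is feasible for $\alpha=2$ (one must first verify that $E_2$ of a bubble is again a bubble), but it does not generalize, as the paper's $\alpha=4$ example shows.

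The one genuine flaw is your disposal of the degenerate alternative. If $\lambda_0(y_0)=\infty$ for some (hence, by the $\liminf$ propagation argument in the proof of Theorem \ref{sys-critical}, every) $y_0$, then the comparisons $f\ge f_{y_0,\lambda}$ for all $\lambda>0$ and all $y_0$ force, via Lemma \ref{Li-1}, that $f$ is a positive \emph{constant}. A positive constant is perfectly consistent with $f\in L^{p}_{loc}(\partial\mathbb{R}^n_+)$ --- the hypothesis is only local integrability --- so ``the induced growth of $f$ contradicts $f\in L^{p}_{loc}$'' is not a valid contradiction. The contradiction you actually need, and which the paper supplies, comes from the other half of the system: the comparison on $u=E_\alpha f$ forces $u$ to depend only on $x_n$ (Lemma \ref{LZ-2}), and for a positive constant $f$ one computes $E_\alpha f(0,t)=C\,t^{\alpha-1}\int_0^\infty \rho^{n-2}(1+\rho^2)^{-(n-\alpha)/2}\,d\rho=+\infty$ precisely because $\alpha>1$; that is, the extension of a positive constant diverges pointwise, so no such solution exists. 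Without this (or an equivalent) argument the first branch of your dichotomy is not excluded and the classification is incomplete.
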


We do not know whether similar inequalities hold for $\alpha=1$ or not. On the other hand, a limiting inequality for $\alpha=n$ can be obtained. See more remark late in this introduction and Section 5.2.

 It will be clear in classifying all positive solutions to \eqref{equ-1} via the method of moving sphere, that for $ p=\frac{2(n-1)}{n+\alpha-2}$ and $q=\frac{2n}{n-\alpha}$, inequality (\ref{HLSD-2}) is equivalent to an
integral inequality on a ball $B_r(0)$:
\begin{equation}\label{HLSD-2-ball}
\|\tilde E_\alpha f\|_{L^q( B_r(0)) }\le C_e(n,\alpha,p)\|f\|_{L^p(
\partial B_r(0))},
\end{equation}
where $$ \tilde E_\alpha f(x) =\int_{\partial B_r(0) }\frac{f(y)}{|x-y|^{n-\alpha}}dS_y, \quad ~~~~~~~~~~\forall \, x\in B_r(0),
$$
and one  extremal function is always constant on the boundary.  Moreover, for $\alpha=2$, we can show that $\tilde E_\alpha f(x)$ is a constant function if extremal function $f(x)$ is a constant on the boundary.  These  enable us  to obtain the best constant $C_e(n,\alpha,p)$  from inequality
(\ref{HLSD-2-ball}).

As a byproduct to our study, we also establish the following nonexistence result to \eqref{equ-1} with subcritical exponents.

\begin{thm}\label{Nonexistence}
Let $1<\alpha<n$,  $\frac{2(n-1)}{n+\alpha-2}\le p<2,$ and $2<q\le\frac{2n}{n-\alpha}$ satisfying
\[\frac{1}{p}-\frac{n}{q (n-1)}<\frac{\alpha-1}{n-1}.
\]
If  $f\in L^\frac{(n-1)(2-p)}{\alpha-1}_{loc}(\partial\mathbb{R}^n_+)$
is a nonnegative solution to equation \eqref{equ-1}, then $f(x)=0$.
\end{thm}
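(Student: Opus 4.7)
The plan is to run the method of moving spheres on the coupled system
\[ u(x) = \int_{\partial\mathbb{R}^n_+} f(y)|x-y|^{\alpha-n}\,dy,\quad f^{p-1}(y) = \int_{\mathbb{R}^n_+} u(x)^{q-1}|x-y|^{\alpha-n}\,dx, \]
with $u := E_\alpha f$, exploiting the fact that in the strictly subcritical range the system fails to be Kelvin-invariant and the failure has a definite sign, which drives the moving sphere all the way to infinity and forces $f \equiv 0$.

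For $y_0 \in \partial\mathbb{R}^n_+$ and $\lambda > 0$, let $z^* := y_0 + \lambda^2(z - y_0)/|z - y_0|^2$ and set
\[ f_\lambda(y) := \left(\frac{\lambda}{|y-y_0|}\right)^{n+\alpha-2} f(y^*),\quad u_\lambda(x) := \left(\frac{\lambda}{|x-y_0|}\right)^{n-\alpha} u(x^*). \]
The identity $|z^* - w^*| = \lambda^2|z-w|/(|z-y_0||w-y_0|)$ together with a direct change of variables gives $u_\lambda = E_\alpha f_\lambda$ on $\mathbb{R}^n_+$, while
\[ f_\lambda^{p-1}(y) = \int_{\mathbb{R}^n_+} u_\lambda(x)^{q-1} \left(\frac{\lambda}{|y-y_0|}\right)^{A}\left(\frac{\lambda}{|x-y_0|}\right)^{-B} |x-y|^{\alpha-n}\,dx, \]
with $A := (n+\alpha-2)(p-1) - (n-\alpha)$ and $B := (n-\alpha)(q-1) - (n+\alpha)$. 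The three hypotheses of the theorem translate exactly to $A \ge 0$, $B \le 0$, and $(A, B) \ne (0, 0)$, so the extra factor is a strict perturbation of the critical, Kelvin-invariant situation.

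Splitting $\mathbb{R}^n_+ = B^+_\lambda(y_0) \cup \Sigma_\lambda$ with $\Sigma_\lambda := \mathbb{R}^n_+ \setminus B^+_\lambda(y_0)$, applying the inversion $x \mapsto x^*$ to the inside piece, and subtracting from the original $f^{p-1}$ equation, I expect to obtain a comparison identity of the form
\[ f^{p-1}(y) - f_\lambda^{p-1}(y) = \int_{\Sigma_\lambda} K_\lambda(x, y)\bigl[u(x)^{q-1} - \Psi_\lambda(x, y)\,u_\lambda(x)^{q-1}\bigr] dx \]
valid for $y \in \partial\mathbb{R}^n_+ \setminus B_\lambda(y_0)$, with an explicit nonnegative kernel $K_\lambda$ and weight $0 \le \Psi_\lambda \le 1$; the subcriticality yields $\Psi_\lambda < 1$ quantitatively on a set of positive measure. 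A parallel identity holds for $u(x) - u_\lambda(x)$ on $\Sigma_\lambda$. Feeding both identities into the HLS inequality of Theorem \ref{HLSD-theo} on the exterior $\Sigma_\lambda$ and using the local integrability hypothesis $f \in L^{(n-1)(2-p)/(\alpha-1)}_{\mathrm{loc}}(\partial\mathbb{R}^n_+)$, a bootstrap on the negative parts $(f - f_\lambda)_-$ and $(u - u_\lambda)_-$ in suitable $L^s$ spaces would give: (i) for $\lambda$ sufficiently small, $f_\lambda \le f$ and $u_\lambda \le u$ on the respective exteriors; (ii) the set of such $\lambda$ is both open and closed in $(0, \infty)$, hence equal to $(0, \infty)$, the quantitative slack $\Psi_\lambda < 1$ precluding any finite ``stopping'' $\bar\lambda$ at which the differences could vanish identically on $\Sigma_{\bar\lambda}$. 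Finally, fixing $y \ne y_0$ and letting $\lambda \to \infty$, one has $y^* \to y_0$ and $(\lambda/|y-y_0|)^{n+\alpha-2} \to \infty$, so the standing inequality $f_\lambda(y) \le f(y)$ forces $f(y_0) = 0$. Since $y_0 \in \partial\mathbb{R}^n_+$ was arbitrary, $f \equiv 0$.

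The principal obstacle will be deriving a sufficiently strong quantitative form of the subcritical slack $1 - \Psi_\lambda$ in the comparison identity --- robust enough to survive the HLS bootstrap and to outweigh the possibly singular behavior of $u$ near $\partial\mathbb{R}^n_+$. The initial step at small $\lambda$ is delicate and is precisely where the integrability exponent $(n-1)(2-p)/(\alpha-1)$ enters, as it is tuned to close the HLS estimate on $\Sigma_\lambda$. A secondary subtlety is that the Kelvin inversion preserves $\mathbb{R}^n_+$ only when centered on $\partial\mathbb{R}^n_+$, which constrains the allowable centers and requires the geometric splitting of $\mathbb{R}^n_+$ by $\partial B_\lambda(y_0)$ to be handled consistently with the half-space boundary.
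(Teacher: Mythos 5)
Your setup through the ``never stopping'' step is essentially the paper's own argument (Section 5.4: Theorem \ref{sys-nonexistence} via Lemmas \ref{K-lm-no-1} and \ref{K-lm-no-2}): the same Kelvin identities with the subcritical weights $(\lambda/|y-y_0|)^{A}$ and $(\lambda/|x-y_0|)^{-B}$ (your $A$ and $-B$ are, up to the harmless factor $p-1$, the paper's $\tau_2$ and $\tau_1$), the same HLS--H\"older bootstrap on the negative parts to start the sphere at small $\lambda$, and the same use of strict subcriticality to rule out a finite stopping radius. The identity $u_\lambda=E_\alpha f_\lambda$ and your translation of the three hypotheses into $A\ge 0$, $B\le 0$, $(A,B)\ne(0,0)$ are all correct.

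The genuine gap is your final step. The inequality $f_\lambda(y)\le f(y)$ is only available for $y$ in the exterior $|y-y_0|\ge\lambda$, so you cannot fix $y\ne y_0$ and let $\lambda\to\infty$; moreover $y^*=y_0+\lambda^2(y-y_0)/|y-y_0|^2$ tends to infinity, not to $y_0$, as $\lambda\to\infty$. The usable consequence of never stopping is $\lambda^{n+\alpha-2}f(y_0)\le\liminf_{|y|\to\infty}|y|^{n+\alpha-2}f(y)$ for every $\lambda>0$, which forces $f(y_0)=0$ only if that liminf is finite --- and finiteness is not available a priori for a solution that is merely locally in $L^{(n-1)(2-p)/(\alpha-1)}$. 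The paper closes the argument differently: since $f_\lambda\le f$ on the exterior for every center and every $\lambda$, Lemma \ref{Li-1} forces $f^{p-1}$ to be constant on $\partial\mathbb{R}^n_+$, and Lemma \ref{LZ-2} forces $E_\alpha f$ to depend only on $x_n$; a positive constant is then impossible because $\int_{\partial\mathbb{R}^n_+}(|y|^2+t^2)^{(\alpha-n)/2}\,dy$ diverges for $\alpha>1$ (Case 2 of the proof of Theorem \ref{sys-critical}), so the constant is zero and $f\equiv 0$. You need this calculus-lemma step (or an equivalent substitute) to finish; the direct limit you propose does not go through.
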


\subsection {Sharp HLS on general manifolds} Our current work has a strong implication in the extension of sharp HLS on general manifolds.

For a smooth and bounded domain $\Omega \subset \mathbb{R}^n$, we certainly can introduce $$\tilde E_\alpha f(x) =\int_{\partial \Omega
}\frac{f(y)}{|x-y|^{n-\alpha}}dS_y, \quad ~~~~~~~~~~\forall \, x\in \Omega
$$
for smooth functions $f(y)$ defined on the boundary of $\Omega$. Using the tools built in this paper, one shall be able to show that
\begin{equation}
\|\tilde E_\alpha f\|_{L^q(\Omega) }\le C_\Omega (n,\alpha,p)\|f\|_{L^p(
\partial \Omega )},
\end{equation}
 with the sharp constant $C_\Omega (n,\alpha,p)$ that is related to the geometric property of the domain. Using certain standard bubbling sequence of functions, we can show that
$$ C_e (n,\alpha,p)\le C_\Omega (n,\alpha,p).$$
One may ask: for which type of domains will the strict inequality hold? Note that similar extension operators were studied by Hang, Wang and Yan
\cite{HWY2008}, where they used the classical Riesz potential (corresponding to partial differential equations with Dirichlet boundary condition); And similar question was discussed by them (Conjecture 1.1 in \cite{HWY2009}).

However, the danger for linking $C_\Omega (n,\alpha,p)$ to the the geometric property of the domain is that the geometric property of a general domain does not play essential role in the definition of the extension operator $\tilde E_\alpha$. Similar question can be asked for other kernels on manifolds (for example, Stein and Weiss type kernel \cite{SW1958}).  Broadly, we may ask, how to extend the sharp HLS on general manifolds.

 For $\alpha=2$, we observe: the common property for the kernel to our inequality \eqref{HLSD-O} and that to the classical HLS \eqref{class-HLS} is that both of them, up to a constant multiplier, are the Green's function of corresponding conformal Laplacian operators.

We thus propose (for simplicity we only consider $\alpha=2$ here)  to extend the classical Sharp HLS inequality  as follows. Let $(M^n, g)$ (for $n\ge 3$) be a  compact smooth Riemmanian manifold, and $R_g$ be its scalar curvature. Let $G_y(x)$ be the Green's function to the conformal Lapacian operator $L_g:=-\Delta_g+\frac{n-2}{4(n-1)}R$ with pole at $y\in M^n$, then

\noindent{\it Extension of the sharp HLS on compact Riemannian manifold}. There is a best constant $N(M^n, g, p, 2)>0$, such that
\begin{equation}\label{class-HLS_man}
|\int_{M^n} \int_{M^n} f(x)G_y(x) h(y) dV_x dV_y|\le N(M^n, g,  p, 2)||f||_p||h||_p
\end{equation}
holds for all $f\in L^p(M^n), \, h\in L^p(M^n),$ where $p=2n/(n+2)$. Moreover, $N(M^n, g,  p, 2)\ge N(\mathbb{S}^n, g_0,  p, 2)$ where $(\mathbb{S}^n, g_0)$ is the standard sphere with induced metric $g_0$; And for $n=3, 4, 5,$ the strict inequality holds if $G_y(x)$ has positive mass (or if $(M^n, g)$ is not conformally equivalent to $(\mathbb{S}^n, g_0)$ by the positive mass theorem of Schoen and Yau \cite{SY1988}). It turns out that \eqref{class-HLS_man} is equivalent to the sharp Sobolev inequality on general compact manifold (we thank X. Wang for verifying this fact with the second named author in \cite{WZ2012}). The general extension of sharp HLS inequality on compact manifolds, in particular for $\alpha \ne 2$, is not clear.

\smallskip

  Let $(M^n, \partial M, g)$ ($n\ge 3$) be a  Riemannian manifold with smooth boundary $\partial M$. For simplicity, we consider a bounded domain $\Omega\subset \mathbb{R}^n$ with smooth boundary $\partial \Omega$, whose mean curvature function is $h$.  Let $F_y(x)$ be the Green's function to the conformal Lapacian operator:
$$
\begin{cases}
-\Delta F_y(x)=0, ~~~ x \in \Omega,\\
\frac {\partial F_y}{\partial \nu}+\frac{n-2}2 h F_y =\delta_y, ~~~ x \in \partial \Omega
\end{cases}
$$
with pole $y$ on the boundary, then

\noindent{\it Extension of the sharp HLS on manifold with boundary}. There is a best constant $N_b(\Omega, p, 2)>0$, such that
\begin{equation}\label{class-HLS_man2}
|\int_{\Omega} \int_{\partial \Omega} f(y)F_y(x) g(x)  dS_ydV_x|\le N_b(\Omega,  p, 2)||f||_{L^p(\partial \Omega)}||g||_{L^t(\Omega)}
\end{equation}
holds for all $f\in L^p(\partial \Omega), \, g\in L^t(\Omega),$ where $p= 2(n-1)/n,$ and $t= 2n/(n+2).$
  Moreover, $N_b(\Omega,  p, 2)\ge N_b(B_1(0), p, 2)$ where $B_1(0)$ is the unit ball center at the origin; And for $n=3, 4, 5,$ the strict inequality holds if $\Omega$ is not conformal to $B_1(0). $

  \smallskip

  Now, for a bounded domain $\Omega\subset \mathbb{R}^n$ with smooth boundary $\partial \Omega$, let $K_y(x)=n(n-2)\omega_nF_y(x),$ and define the extension operator
 \begin{equation}\label{ext_op}   E_{2, \Omega} f(x) =\int_{\partial \Omega
}{f(y)}K_y(x)dS_y, \quad ~~~~~~~~~~\forall \, x\in \Omega.
\end{equation}
We shall be able to show that, there is a best constant $\hat C_\Omega (n, 2,p)$, such that
\begin{equation}\label{HLSD-2-domain}
\| E_{2, \Omega} f\|_{L^q(\Omega) }\le \hat C_\Omega (n,2,p)\|f\|_{L^p(
\partial \Omega )},
\end{equation}
and $C_e (n, 2,p)\le \hat C_\Omega (n, 2,p)$. Moreover, for $n=3, 4, 5$,  the equality holds if and only if  $\Omega$ is a ball. We believe that for $\alpha=2$, sharp inequality \eqref{class-HLS_man2} and \eqref{HLSD-2-domain} are equivalent to the sharp trace inequality proved in Escobar \cite{E1992-1}. The general extension of sharp HLS on compact manifolds with boundary for $\alpha\ne 2$ is not clear.


\medskip

The classification results in Theorem \ref{Classification} are obtained for continuous functions, based on some basic calculus
 lemmas initially used in Li and Zhu \cite{LZ1995} for $C^1$ functions.  Even though the extremal functions are weak solutions
to the integral equations, similar argument to the proof of Brezis and Kato's lemma  implies that all weak solutions are indeed
smooth. Details on the regularity of solutions are given in Section 4. Interestingly, the same calculus lemmas in Li
and Zhu \cite{LZ1995} for $C^1$ functions were later proved to be true for all continuous functions by Li and Nirenberg (see Appendix B in \cite{Li2004}), and
eventually it is proved to be true for all finite, non-negative measures in $\mathbb{R}^n$ by Frank and Lieb \cite{FL2010}. In \cite{FL2010}, the invariant property is used to derive the absolute continuity for the measure with respect to Lebesgue measure, which is different to our regularity argument.


For $\alpha =n$, the modern folklore inequality is obtained via taking the limit of the power, see Corollary {\ref{folklore}} below. The other case $\alpha>n$ will be addressed in our future paper \cite{DZ3}, after we explain how to extend the classical HLS inequality \eqref{class-HLS} for negative $\lambda=n-\alpha$ in \cite{DZ2}. As we pointed out at the beginning, this is another direction for extending the classical sharp HLS inequality. We need to point out that sharp Sobolev type inequalities with negative exponent on the standard sphere $\mathbb{S}^n$ did appear in, for example,   Ai, Chou and Wei \cite{ACW2000}, Yang and Zhu \cite{YZ2004}, Hang and Yang \cite{HY2004}, and Ni and Zhu \cite{NZ1}. See also our recent work \cite{DZ2012} for equations with negative exponent.

 Our results are closely related to the study of fractional Lapalacian operator in the whole space \cite{CS2007},  fractional Yamabe problem
\cite{CG2011,GM2011,GQ2011,HL1999,JX2011,Li2004},   fractional prescribing curvature problem on $\mathbb{S}^n$ \cite{JLX2011a,JLX2011b}, and to  the study of Yamabe and prescribing curvature problems with boundary, see, e. g. \cite{E1988, E1992-1,E1992, LZ1995, LZ1997, HL1999}.

Our proof of sharp integral inequalities is essentially along the line of the classical paper by Lieb
\cite{Lieb1983}. Our approach to the classification of extremal function is similar to that in \cite{LZ1995, CLO2006, HWY2008}  and
\cite{Li2004} in spirit, but quite different in details. For instance, we use the method of moving sphere for a system of equations
to classify the extremal functions.

We shall first prove Theorem \ref{HLSD-theo}  in Section 2. The inequalities are proved along the line of
the proof for the classical HLS inequality (see, e.g. Stein \cite{Stein}). The sharp inequalities and the existences
of extremal functions are obtained via the arguments based on symmetrization. The classification of extremal functions via the
method of moving sphere is given in Section 3. In Section 4, we establish the regularity properties for solutions to the integral
equations use the  argument similar to the proof of Brezis and Kato's lemma \cite{BK1979}. In the final section, we discuss the similar
inequalities in a ball, limit case, and the relation between our operators and fractional Lapalacian operators in the whole space. As a byproduct of our classification argument, we also obtain the nonexistence result (Theorem \ref{Nonexistence}) for a subcritical integral equation system, though it is still an open problem to find the extremal function for inequality \eqref{HLSD-2} if $p\ne 2(n-1)/(n+\alpha-2)$ (corresponding to the case $p\ne 2n/(n+\alpha)$ for HLS in whole space).

\section{\textbf{The sharp Hardy-Littlewood-Sobolev inequality on the upper half space} \label{Section 2}}
In this section, we shall establish the sharp  HLS inequalities on the upper half space. We first establish the
following Young inequality on the upper half space $\mathbb{R}^n_+$.

\begin{lm}\label{Young ineq}
 Suppose that  $p,q,r\in[1,\infty]$ are three parameters  satisfying $\frac1p+\frac1q=1+\frac1r.$
 For any  $h\in L^p(\partial\mathbb{R}^n_+),g\in L^q(\mathbb{R}^n_+),$ define
\[ g*h(x)=\int_{\partial\mathbb{R}^n_+}g(x-y)h(y) dy, ~~~~~~~~ \forall \, x \in \mathbb{R}^n_+.
 \]
Let  $\tilde{g}(x')=g(x', 0)$ be the restriction of $g(x)$ on the boundary of the upper half space $\mathbb{R}^n_+$. If
$|g(x',x_n)|\le|\tilde{g}(x')|$ for all  $x'\in  \partial \mathbb{R}^n_+$ and $x_n >0$,
  then
 \begin{equation}\label{Young-1}
 \|g*h\|_{L^r(\mathbb{R}^n_+)}\le\|h\|_{L^p(\partial\mathbb{R}^n_+)}
 \|g\|_{L^q(\mathbb{R}^n_+)}^{q/r}\|\tilde{g}\|^{1-q/r}_{L^q(\partial\mathbb{R}^n_+)}.
 \end{equation}
\end{lm}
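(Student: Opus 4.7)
The plan is to reduce to the classical Young convolution inequality in $\mathbb{R}^{n-1}$ by slicing in the $x_n$ variable. For each fixed $x_n > 0$, set $g_{x_n}(x') := g(x', x_n)$. Since $y$ ranges over $\partial \mathbb{R}^n_+ \simeq \mathbb{R}^{n-1}$, the defining formula rewrites as $(g*h)(x', x_n) = (g_{x_n} * h)(x')$, an honest $(n-1)$-dimensional convolution. Applying the classical Young inequality on $\mathbb{R}^{n-1}$ with the exponent relation $\frac{1}{p} + \frac{1}{q} = 1 + \frac{1}{r}$ gives
\begin{equation*}
\|(g*h)(\cdot, x_n)\|_{L^r(\mathbb{R}^{n-1})} \leq \|g_{x_n}\|_{L^q(\mathbb{R}^{n-1})}\, \|h\|_{L^p(\partial \mathbb{R}^n_+)}.
\end{equation*}

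Next I would raise this to the $r$-th power and integrate over $x_n > 0$, producing
\begin{equation*}
\|g*h\|_{L^r(\mathbb{R}^n_+)}^r \leq \|h\|_{L^p(\partial \mathbb{R}^n_+)}^r \int_0^\infty \|g_{x_n}\|_{L^q(\mathbb{R}^{n-1})}^r \, dx_n.
\end{equation*}
The right-hand side is a mixed norm of $g$, whereas the target involves the pure $L^q(\mathbb{R}^n_+)$ norm. This is exactly where the pointwise hypothesis $|g(x', x_n)| \leq |\tilde g(x')|$ enters. The exponent relation together with $p \geq 1$ forces $r \geq q$, so one may split $\|g_{x_n}\|_q^r = \|g_{x_n}\|_q^q \cdot \|g_{x_n}\|_q^{r-q}$ and bound the second factor uniformly by $\|\tilde g\|_{L^q(\partial \mathbb{R}^n_+)}^{r-q}$. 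Pulling this constant out of the integral and applying Fubini to identify $\int_0^\infty \|g_{x_n}\|_q^q \, dx_n$ with $\|g\|_{L^q(\mathbb{R}^n_+)}^q$, an $r$-th root then delivers \eqref{Young-1}.

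The main (and essentially only) subtlety is verifying $r \geq q$, without which the splitting $r = q + (r-q)$ would have the wrong sign and the pointwise bound on $g$ would point the wrong way. Beyond that the argument is just the standard slice-then-Fubini trick combined with classical Young. Endpoint cases ($r = \infty$, $p = 1$, or $q = 1$) can be handled by the same strategy with Hölder or Minkowski replacing Young as appropriate; the boundary hypothesis on $g$ is precisely the mechanism that interpolates between the natural mixed-norm bound produced by slicing and the cleaner $L^q(\mathbb{R}^n_+)$ norm appearing in the statement.
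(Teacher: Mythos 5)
Your proof is correct, and it takes a genuinely different route from the paper's. The paper adapts the standard proof of Young's inequality directly: it writes $|g(x-y)|\,|h(y)| = |g|^{1-a}|h|^{1-b}\cdot|g|^{a}|h|^{b}$, uses the hypothesis $|g(x'-y,x_n)|\le|\tilde g(x'-y)|$ to replace the factor $|g|^{a}$ by $|\tilde g|^{a}$, applies a three-factor H\"older inequality with exponents $r, p_1, p_2$ satisfying $\frac1{p_1}+\frac1{p_2}+\frac1r=1$, and then integrates the $r$-th power over $\mathbb{R}^n_+$ with Fubini, finally choosing $a=1-q/r$, $b=1-p/r$ to match the exponents (with $r=\infty$ handled separately by H\"older). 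You instead slice in $x_n$, invoke the classical $(n-1)$-dimensional Young inequality on each horizontal slice to get $\|(g*h)(\cdot,x_n)\|_{L^r(\mathbb{R}^{n-1})}\le \|g_{x_n}\|_{L^q}\|h\|_{L^p}$, and then handle the $x_n$-integration by the splitting $\|g_{x_n}\|_q^r=\|g_{x_n}\|_q^{q}\,\|g_{x_n}\|_q^{r-q}$, bounding the second factor by $\|\tilde g\|_q^{r-q}$ via the pointwise hypothesis and identifying $\int_0^\infty\|g_{x_n}\|_q^q\,dx_n=\|g\|_{L^q(\mathbb{R}^n_+)}^q$ by Fubini. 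The key verification $r\ge q$ is exactly right: $\frac1q-\frac1r=1-\frac1p\ge0$. Your argument is more modular (it black-boxes classical Young rather than reproving it) and makes transparent that the boundary-domination hypothesis serves precisely to interpolate between the natural mixed norm $L^r_{x_n}L^q_{x'}$ and the pure $L^q(\mathbb{R}^n_+)$ norm; the paper's argument is self-contained and produces both formulations of the exponent constraints in one pass. Both yield the identical constant.
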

\begin{proof}
If $r=\infty,$ then $\frac1p+\frac1q=1.$ By H\"{o}lder inequality and the translation
invariance of Lebesgue measure, we have
\begin{eqnarray*}
|\int_{\partial\mathbb{R}^n_+} g(x'-y,x_n)  h(y)dy|&\le&
\int_{\partial\mathbb{R}^n_+}|\tilde{g}(x'-y)| |h(y)|dy\\
&\le&\|\tilde{g}\|_{L^q(\partial\mathbb{R}^n_+)}\|h\|_{L^p(\partial\mathbb{R}^n_+)}.
\end{eqnarray*} Hence,
\[\|g*h\|_{L^\infty(\mathbb{R}^n_+)}\le\|\tilde{g}\|_{L^q(\partial\mathbb{R}^n_+)}
\|h\|_{L^p(\partial\mathbb{R}^n_+)}.
\]

Next, we consider the case $1\le r<\infty.$
 For any  $a,b\in[0,1]$, $p_1,p_2 \in[0,\infty]$ satisfying $\frac1{p_1}+\frac1{p_2}+\frac1r=1,$ and $x\in \mathbb{R}^n_+$, we have
 \begin{eqnarray*}
 |g*h(x)|&\le&\int_{\partial\mathbb{R}^n_+}|g(x-y)|^{1-a}|h(y)|^{1-b}|g(x'-y,x_n)|^{a}|h(y)|^{b} dy\\
&\le&\int_{\partial\mathbb{R}^n_+}|g(x-y)|^{1-a}|h(y)|^{1-b}|\tilde g(x'-y)|^{a}|h(y)|^{b} dy\\
 &\le&\big(\int_{\partial\mathbb{R}^n_+}|g(x-y)|^{(1-a)r}|h(y)|^{(1-b)r} dy\big)^\frac1r\|\tilde{g}\|^{a}_{L^{ap_1}
 (\partial\mathbb{R}^n_+)}\|h\|^{b}_{L^{bp_2}(\partial\mathbb{R}^n_+)}.\\
 \end{eqnarray*}
  Taking the $r^{th}$ power of the above inequality and integrating on $\mathbb{R}^n_+$, we have
\begin{eqnarray*}
 \int_{\mathbb{R}^n_+}|g*h(x)|^rdx
 &\le& \|\tilde{g}\|^{ar}_{L^{ap_1}
 (\partial\mathbb{R}^n_+)}\|h\|^{br}_{L^{bp_2}(\partial\mathbb{R}^n_+)} \cdot \int_{\mathbb{R}^n_+}
 \int_{\partial\mathbb{R}^n_+}|g(x-y)|^{(1-a)r}|h(y)|^{(1-b)r} dydx \\
 &=& \|\tilde{g}\|^{ar}_{L^{ap_1}
 (\partial\mathbb{R}^n_+)}\|h\|^{br}_{L^{bp_2}(\partial\mathbb{R}^n_+)}
 \cdot\int_{\partial\mathbb{R}^n_+}|h(y)|^{(1-b)r}\big(\int_{\mathbb{R}^n_+}|g(x-y)|^{(1-a)r} dx\big)dy \\
 &=&\|h\|^{(1-b)r}_{L^{(1-b)r}(\partial\mathbb{R}^n_+)}\|g\|^{(1-a)r}_{L^{(1-a)r}(\mathbb{R}^n_+)}
 \|\tilde{g}\|^{ar}_{L^{ap_1}(\partial\mathbb{R}^n_+)}\|h\|^{br}_{L^{bp_2}(\partial\mathbb{R}^n_+)}.
 \end{eqnarray*}
 Now, choosing $p=(1-b)r=bp_2,q=(1-a)r=ap_1$,  we have inequality \eqref{Young-1}.

 To complete the proof, we still need to show that $p$, $q$ and $r$ are arbitrary indices in $[1, \infty)$ satisfying $\frac1{p}+\frac1{q}=1+\frac1r.$
 Since $p=(1-b)r=bp_2,q=(1-a)r=ap_1$, we have $a=1-\frac{q}r,b=1-\frac pr,$ and $p_1=\frac q a, p_2=\frac p b$. We can directly
 verify that
\[1=\frac1{p_1}+\frac1{p_2}+\frac1r=\frac1q+\frac1p-\frac1r.
 \]

Conversely, if $p,q,r \in [1, \infty)$ are given and satisfy $\frac1{p}+\frac1{q}=1+\frac1r$, then $a=1-\frac{q}r \ge 0 $
and $b=1-\frac pr \ge 0$. It is obvious that both $a, b \le 1.$ Also, for $p_1=\frac q a, p_2=\frac p b$, it is easy to check that
\[\frac1{p_1}+\frac1{p_2}+\frac1r=1.
\]

\end{proof}

We are now ready to prove the inequality in Theorem \ref{HLSD-theo} with a constant (which may not be sharp).

\textbf{Proof of Theorem \ref{HLSD-theo}.}


  For $\alpha \in (1,n),$ $p\in [1, \frac {n-1}{\alpha-1})$ and $q$  given
 by  $\frac
1q=(\frac1p-\frac{\alpha-1}{n-1})\cdot \frac{n-1}n,$ we first prove
\begin{equation}\label{HD-1-1}
\|E_\alpha f\|_{L^q_W( \mathbb{R}^n_+)}\le C(n,\alpha,p)\|f\|_{L^p(
\partial\mathbb{R}^n_+)}
\end{equation}
for some constant $C(n,\alpha,p)$. That is, we need to show that there is a constant $C(n,\alpha,p)>0,$ such that
\begin{equation}\label{HD-1}
meas\{x:|E_\alpha f(x)|>\lambda\}\le\big(C(n,\alpha,p)\frac{\|f\|_{L^p(\partial\mathbb{R}^n_+)}}\lambda\big)^q, ~~~~~\forall f\in
L^p(\partial\mathbb{R}^n_+), ~\forall~ \lambda>0.
\end{equation}
Note that inequality (\ref{HD-1-1}) implies, via the Marcinkiewicz interpolation \cite{Stein,ONeil}, that
$$
\|E_\alpha f\|_{L^q( \mathbb{R}^n_+)}\le C_1(n,\alpha,p)\|f\|_{L^p(
\partial\mathbb{R}^n_+)},
$$
or even slight stronger inequality
\begin{equation}\label{HD-1-3}
\|E_\alpha f\|_{L^q( \mathbb{R}^n_+)}\le C_2(n,\alpha,p)\|f\|_{L^{p,q}(
\partial\mathbb{R}^n_+)}
\end{equation}
 for $p\in (1, \frac {n-1}{\alpha-1})$ and $q$  given by  $\frac
1q=(\frac1p-\frac{\alpha-1}{n-1})\cdot \frac{n-1}n.$

Let $X$ be either $\mathbb{R}^n_+$ or $\partial \mathbb{R}^n_+$.  Recall: for a given measurable function $f(x)$ on $X$ and $1\le
p<\infty$, the weak $L^p$ norm of $f(x)$  is defined by
\[\|f\|_{L^{p}_W(X)}=\inf \{A>0 \,|\,meas \{x \in X \,|\,f(x)>t\}\cdot t^p\le A^p\},
\]
and $L^{p}_W(X)=\{f\,|\, f ~\text{is a measurable function, and}~\|f\|_{L^{p}_W(X)}<\infty \}$.
 More generally, for $1<p<\infty,
1\le q<\infty$, the Lorentz $L^{p.q}(X) $ norm is given by
\[\|f\|_{L^{p,q}(X)}=\big[\int_0^\infty(x^\frac1pf^{**}(x))^q\frac{dx}x\big]^\frac1q.
\]
where $f^{**}=\frac1x\int_0^xf^*(t)dt$, $f^*$ is the nonnegative nonincreasing rearrangement of $|f|$ (see, O'Neail \cite{ONeil} $P_{136}$).  For $1\le
p\le\infty,q=\infty$,
\[\|f\|_{L^{p,\infty}(X)}=\sup_{x>0}x^\frac1pf^{**}(x),
\]
 and the Lorentz space $
L^{p,q}(X)=\{f\,|\,f ~\text{is a measurable function, and}~ \|f\|_{L^{p,q}(X)}<\infty\}.$ It is known that
$L^{p}_W(X)=L^{p,\infty}(X)$ is a special case of such space. See also, Stein \cite{Stein}.

 For any $r>0$, define
 \[E_{\alpha, r}^1 f(x)=\int_{\partial\mathbb{R}^n_+, |y-x|\le r}\frac{f(y)}{|x-y|^{n-\alpha}}dy,
  \]
  and
   \[E_{\alpha, r}^2 f(x)=\int_{\partial\mathbb{R}^n_+, |y-x|\ge  r}\frac{f(y)}{|x-y|^{n-\alpha}}dy.
  \]
Then for any $\lambda>0$,
\begin{equation}\label{HD-2}
meas\{x:|E_\alpha f(x)|>2\lambda\}\le meas\{x:|E_{\alpha,r}^1 f (x)|>\lambda\}+meas\{x:|E_{\alpha,r}^2f(x)|>\lambda\}.
\end{equation}

 It is enough to prove  inequality \eqref{HD-1} with $2\lambda$ in place of $\lambda$ in the left side of the
inequality. We can further assume $\|f\|_{L^p(\partial\mathbb{R}^n_+)}=1.$


%
%
%

Since $|x-y|\ge |x'-y|$ for any $x=(x',x_n)\in\mathbb{R}^n_+$ and $
 y\in\partial\mathbb{R}^n_+$, using Young inequality
\eqref{Young-1} (with $q=1$), we have
\begin{eqnarray*}
\|E_{\alpha, r}^1 f \|_{L^p(\mathbb{R}^n_+)}&\le&\big(\int_{\mathbb{R}^n_+}
\frac{\chi_r(|x-y|)}{|x-y|^{n-\alpha}}dx\big)^{\frac{1}p}\big(\int_{\partial\mathbb{R}^n_+}
\frac{\chi_r(|x'-y|)}{|x'-y|^{n-\alpha}}dy\big)^{\frac{p-1}p}\|f\|_{L^p(\partial\mathbb{R}^n_+)}\\
&=:&D_1D_2,
\end{eqnarray*}
where $\chi_r(x)=1$ for $|x|\le r$ and $\chi_r(x)=0$ for $|x|>r$, and
\begin{eqnarray*}
D_1&=&\big(\int_{B^+_r(y)}
\frac{1}{|x-y|^{n-\alpha}}dx\big)^{\frac{1}p}=C_1(n,\alpha,p)r^\frac{\alpha}p,\\
D_2&=&\big(\int_{B^{n-1}_r(x')} \frac{1}{|x'-y|^{n-\alpha}}dy\big)^{\frac{p-1}p}=C_2(n,\alpha,p)r^\frac{(\alpha-1)(p-1)}p.
\end{eqnarray*}
We use the fact that $\alpha>1$ in the computation for $D_2$.
It follows that
\begin{eqnarray}\label{HD-3}
meas\{x:|E_{\alpha,r}^1 f|>\lambda\}\le\frac{\|E_{\alpha,r}^1 f \|_{L^p(\mathbb{R}^n_+)}^p}{\lambda^p}\le \frac{C_3(n,\alpha,p)r^{\alpha
p-(p-1)}}{\lambda^p}.
\end{eqnarray}
On the other hand,
\begin{eqnarray*}
\|E_{\alpha, r}^2 f\|_{L^\infty(\mathbb{R}^n_+)}
&\le&\int_{\partial\mathbb{R}^n_+}\frac{1-\chi_r(|x'-y|)}{|x'-y|^{n-\alpha}}|f(y)|dy\nonumber\\
&\le&\big(\int_{\partial\mathbb{R}^n_+}\big(\frac{1-\chi_r(|x'-y|)}{|x'-y|^{n-\alpha}}\big)^{p'}dy\big)^\frac1{p'}\|f\|_{L^p(\partial\mathbb{R}^n_+)}\nonumber\\
&=:&D_3,
\end{eqnarray*} where $\frac1p+\frac1{p'}=1.$ It is easy to see (note: $p<(n-1)/(\alpha-1)$) that
\begin{eqnarray*}
D_3&=&\big(\int_{\partial\mathbb{R}^n_+\backslash B^{n-1}_r(x')}
\big(\frac{1}{|x'-y|^{n-\alpha}}\big)^{p'}dy\big)^{\frac{1}{p'}}=C_4(n,\alpha,p)r^{\frac{(n-1)-(n-\alpha)p'}{p'}}.
\end{eqnarray*}
If we choose $r=(C_4^{-1}(n, \alpha,p)  \lambda)^{\frac{p'}{(n-1)-(n-\alpha)p'}}$, thus
$\lambda=C_4(n,\alpha,p)r^{\frac{(n-1)-(n-\alpha)p'}{p'}}$. It follows that
$
\|E_{\alpha, r}^2f\|_{L^\infty(\mathbb{R}^n_+)}\le\lambda,
$
thus
\[meas\{x:|E_{\alpha, r}^2 f |>\lambda\}=0.
\]
Bringing the above and \eqref{HD-3} into \eqref{HD-2}, we have
\begin{eqnarray*}
meas\{x:|E_\alpha f|>2\lambda\}&\le& meas\{x:|E_{\alpha, r}^1 f |>\lambda\}\le \frac{C_3(n, \alpha) r^{\alpha p-(p-1)}}{\lambda^p}\\
&=&C_5(n, \alpha)\lambda^{\frac{p'[\alpha p-(p-1)]}{(n-1)-(n-\alpha)p'}-p},
\end{eqnarray*} where
\begin{eqnarray*}
\frac{p'[\alpha
p-(p-1)]}{(n-1)-(n-\alpha)p'}-p&
=&-\frac{np}{(n-1)-(\alpha-1)p}.
\end{eqnarray*}
Let
\[q=\frac{np}{(n-1)-(\alpha-1)p},
\] that is,
\[\frac 1q=\frac{n-1}{n}\big(\frac1p-\frac{\alpha-1}{n-1}\big),
\]
we then obtain (\ref{HD-1}).

%
%
%
%
%
%
%
%


 The sharp constant to inequality (\ref{HLSD-2}) is classified by
\[ C_e({n,\alpha,p})=\sup\{\|E_\alpha f\|_{L^q(\mathbb{R}^n_+)}\,:\, f\in L^p{(\partial\mathbb{R}^n_+)},\,\|f\|_{L^p( \partial\mathbb{R}^n_+)}=1\}.
\]
Using  symmetrization argument,  we will show that the above supreme is attained by a radially symmetric (with respect to some
point) function. Moreover, we will show that all extremal functions for inequality (\ref{HLSD-2}) are radially symmetric with respect
to some point.


For any measurable function $u(x)$ on $\mathbb{R}^n$ vanishing at infinity, we can define its radially symmetric, non-increasing rearrangement
function $u^*$.
 $u^*(x)$ is a nonnegative lower-semicontinuous
 function and has the same distribution as $|u|$.  Moreover, for $1\le p\le \infty$, the following inequality holds
\begin{equation}\label{cov_prop1}
\|u*v\|_{L^p(\mathbb{R}^n)}\le\|u^**v^*\|_{L^p(\mathbb{R}^n)},
\end{equation}
where $u*v(x)=\int_{\mathbb{R}^n}u(y)v(x-y)dy$ is the convolution product. See, for example,  Brascamp, Lieb and Luttinger \cite{BLL1974}.

 Let $\{f_j\}_{j=1}^\infty \in C_0^\infty (\partial \mathbb{R}^n_+) $ be a nonnegative maximizing sequence
with $\|f_j\|_{L^p(\partial\mathbb{R}^{n}_+)}=1$,  $f_j^*$ be the rearrangement of $f_j.$

 Since
\[\|f_j^*\|_{L^p(\partial \mathbb{R}_+^{n})}=\|f_j\|_{L^p(\partial \mathbb{R}_+^{n})}=1,
\]
and by (\ref{cov_prop1}),
\begin{eqnarray*}
\|E_\alpha
(f_j)\|^q_{L^q(\mathbb{R}^n_+)}&=&\int_0^\infty\int_{\partial\mathbb{R}^{n}_+}
\big(\int_{\partial\mathbb{R}^{n}_+}\frac{f_j(y)}{(|x'-y|^2+x_n^2)^{\frac{n-\alpha}2}}dy\big)^qdx'dx_n\\
&\le&\int_0^\infty\int_{\partial\mathbb{R}^{n}_+}
\big(\int_{\partial\mathbb{R}^{n}_+}\frac{f_j^*(y)}{(|x'-y|^2+x_n^2)^{\frac{n-\alpha}2}}dy\big)^qdx'dx_n ~ \ \  \ \ \\
&=&\|E_\alpha (f^*_j)\|^q_{L^q(\mathbb{R}^n_+)},
\end{eqnarray*}
we know that $\{f_j^*\}_{j=1}^\infty$ is also a maximizing sequence.  Without loss of generality, we can assume that this is a sequence of
nonnegative radially symmetric and  non-increasing functions. To avoid that  $f_j$ may converge to a trivial function, we need to modify
the sequence further.

For convenience, denote $e_1=(e'_1,0)=(1,0,\cdots,0,0)\in\mathbb{R}^n$, and
\begin{equation*}
a_j:=sup_{\lambda>0} \lambda^{-\frac{n-1}p}f_j(\frac {e'_1}\lambda).
\end{equation*}
Note that  for $y\in \partial \mathbb{R}_+^{n}\setminus \{0\}$,
\[0\le f_j(y)=f_j(|y|e'_1)=|y|^{-\frac{n-1}p}|y|^{\frac{n-1}p}f_j(|y|e'_1)\le a_j|y|^{-\frac{n-1}p},
\]
and
\[\|f_j\|_{L^{p,\infty}(\partial \mathbb{R}_+^{n})}\le C a_j.
\]
Thus, for $\frac 1q=(\frac1p-\frac{\alpha-1}{n-1})\cdot \frac{n-1}n$, by \eqref{HD-1-3}  we have
\begin{eqnarray*}
\|E_\alpha (f_j)\|_{L^q(\mathbb{R}^n_+)}&\le& C(n,\alpha,p)\|f_j\|_{L^{p,q}(\partial\mathbb{R}^{n}_+)}\\
&\le&
C(n,\alpha,p)\|f_j\|_{L^{p,p}(\partial\mathbb{R}^{n}_+)}^{\frac{n-1-(\alpha-1)p}n}
\|f_j\|_{L^{p,\infty}(\partial\mathbb{R}^{n}_+)}^{\frac{1+(\alpha-1)p}n}\\
&\le& C(n,\alpha,p)a_j^{\frac{1+(\alpha-1)p}n}.
\end{eqnarray*}
This implies $a_j\ge C(n,\alpha,p)>0.$  Define $ f^{\lambda}_j(x) = {\lambda}^{-\frac{n-1}p}f(\frac {x}\lambda)$.  Thus, we may choose $\lambda_j$ such that
$$f^{\lambda_j}_j(e'_1)={\lambda_j}^{-\frac{n-1}p}f(\frac {e'_1}{\lambda_j})  \ge C(n,\alpha,p)>0.$$
Also, it is easy to verify that
\[\|f_j^{\lambda_j}\|_{L^p(\partial \mathbb{R}_+^{n})}=\|f_j\|_{L^p(\partial \mathbb{R}_+^{n})},
\quad \text{and}\quad \|E_\alpha (f_j^{\lambda_j})\|_{L^q(\mathbb{R}^n_+)}=\|E_\alpha (f_j)\|_{L^q(\mathbb{R}^n_+)}.
\]
Thus, $\{f_j^{\lambda_j}\}_{j=1}^\infty$ is also a maximizing sequence.  Therefore, we can further assume that the nonnegative and
radially non-increasing maximizing sequence $\{f_j\}_{j=1}^\infty$ with $\|f_j\|_{L^p(\partial \mathbb{R}_+^{n})}=1$ also satisfies
$f _j(e'_1)\ge C(n,\alpha,p)>0$ for all $j\ge 1$. It follows that
\begin{equation}|f_j(y)|\le \omega_{n-1}^{-\frac1p}|y|^{-\frac{n-1}p}, \ \ \ \ \ \ \ \forall y \in \partial \mathbb{R}_+^{n}.
\label{conv-9-3}
\end{equation}
Up to a subsequence, we can find a nonnegative, radially nonincreasing function $f$ such that $f_j\to f$ a.e..  Hence we conclude
that $f(y)\ge C(n,\alpha,p)>0$ for $|y|\le1,f_j\rightharpoonup f$ in $L^p(\partial \mathbb{R}_+^{n})$, and $\|f\|_{L^p(\partial
\mathbb{R}_+^{n})}\le1$. From Brezis and Lieb's Lemma \cite{BL1983},  we see
\begin{eqnarray*}
\int_{\partial \mathbb{R}_+^{n}}\big||f_j|^p-|f|^p -|f_j-f|^p\big|dy \to 0.
\end{eqnarray*} So
\begin{eqnarray}\label{Ex-1}
\|f_j-f\|^p_{L^p(\partial \mathbb{R}_+^{n})}&=&\|f_j\|^p_{L^p(\partial \mathbb{R}_+^{n})}-\|f\|^p_{L^p(\partial \mathbb{R}_+^{n})}
+o(1)\nonumber\\
&=&1-\|f\|^p_{L^p(\partial \mathbb{R}_+^{n})}+o(1).
\end{eqnarray}
 Note $\alpha\in (1, n)$.  Also from \eqref{conv-9-3} we have: for any fixed $x\ne 0$, and $|y|>2|x|$,
 $$|\frac{f_j(y)}{|x-y|^{n-\alpha}}-\frac{f(y)}{|x-y|^{n-\alpha}}|\le \frac C{|y|^{n-\alpha+(n-1)/p}}.$$
   Thus   $(E_\alpha f_j)(x)\to(E_\alpha f)(x)$ for almost all  $x\in\mathbb{R}^{n}_+$ by dominated convergence theory.  It follows that
\begin{eqnarray*}
C_e(n,\alpha,p)+o(1) &=&\|E_\alpha f_j\|^q_{L^q(\mathbb{R}^{n}_+)}\\
&=&\|E_\alpha f\|^q_{L^q(\mathbb{R}^{n}_+)}+\|E_\alpha (f_j-f)\|^q_{L^q(\mathbb{R}^{n}_+)}+o(1)\\
&\le&C_e^q({n,\alpha,p})\|f\|^q_{L^p(\partial \mathbb{R}_+^{n})}+C_e^q({n,\alpha,p})\|f_j-f\|^q_{L^p(\partial \mathbb{R}_+^{n})}+o(1).
\end{eqnarray*} So
\begin{eqnarray}\label{Ex-2}
1&\le&\|f\|^q_{L^p(\partial \mathbb{R}_+^{n})}+\|f_j-f\|^q_{L^p(\partial \mathbb{R}_+^{n})}+o(1).
\end{eqnarray}
Combining \eqref{Ex-1} and \eqref{Ex-2} and letting $j\to \infty$, we obtain
\begin{eqnarray*}
1&\le&\|f\|^q_{L^p(\partial \mathbb{R}_+^{n})}+ \big(1-\|f\|^p_{L^p(\partial \mathbb{R}_+^{n})}\big)^\frac qp.
\end{eqnarray*}
From the fact that $q>p$ and $f\not\equiv 0,$ we have $\|f\|_{L^p(\partial \mathbb{R}_+^{n})}=1$.  Hence $f_j\to f$ in $L^p(\partial
\mathbb{R}_+^{n})$ and $f$ is a maximizer. This shows the existence of an extremal function to the sharp inequality \eqref{HLSD-2}.

Next we show the radial symmetry and  monotonicity hold for all extremal functions.  Assume that $f\in L^p(\partial \mathbb{R}_+^{n})$
is a maximizer,  so is $|f|$. Thus  $\|E_\alpha f\|_{L^q(\mathbb{R}^{n}_+)}=\|E_\alpha |f|\|_{L^q(\mathbb{R}^{n}_+)}$,
which implies $f\ge0$ or $f\le0$. Without loss of generality, we only consider the case of $f\ge0$. Then the Euler-Lagrange equation
for $f(x)$ is, up to a constant multiplier, given by equation \eqref{equ-1}
\begin{equation*}
f^{p-1}(y)=\int_{\mathbb{R}^n_+} \frac{(E_\alpha f(x))^{q-1}}{|x-y|^{n-\alpha}}dx,\quad\quad \forall\, y\in\partial\mathbb{R}^n_+.
\end{equation*}
On the other hand, (see, e.g. \cite{Lieb1977} $P_{103}$), if $u$ is nonnegative, radially symmetric, and strictly decreasing in the radial direction, $v$ is nonnegative,
$1<p<\infty$ and
\begin{equation*}\label{cov_prop2}
\|u*v\|_{L^p(\mathbb{R}^n)}=\|u*v^*\|_{L^p(\mathbb{R}^n)}<\infty,
\end{equation*} then $v(x)=v^*(x-x_0)$ for some $x_0\in\mathbb{R}^n.$
It follows from this fact and the Euler-Lagrange equation satisfied by $f(y)$, that $f(y)=f^*(y-y_0)=f^*(|y-y_0|)$ for
some $y_0 \in
\partial\mathbb{R}^{n}_+,$ where $f^*(r)$ is decreasing in $r$.

%
%
%
%
%
%

\section{\textbf{Classification of positive solutions for integral equations } \label{Section 3}}
 In this section, we classify all nonnegative solutions to  integral equation \eqref{equ-1}
 for $p=2(n-1)/(n+\alpha-2)$ and $q=2n/(n-\alpha).$


Let $f$ be a nonnegative function satisfying \eqref{equ-1}. Define $u(y)=f^{p-1}(y),v(x)=E_\alpha f(x)$, $\theta=\frac1{p-1},$ and
$\kappa=q-1.$ Then the single equation \eqref{equ-1} can be rewritten as an integral system
\begin{equation}\label{sys-1}
\begin{cases}
u(y)=\int_{\mathbb{R}^{n}_+}\frac{v^{\kappa}(x)}{|x-y|^{n-\alpha}}dx,&\quad y\in\partial\mathbb{R}^{n}_+,\\
v(x)=\int_{\partial\mathbb{R}^{n}_+}\frac{u^{\theta}(y)}{|x-y|^{n-\alpha}}dy,&\quad x\in\mathbb{R}_+^{n}
\end{cases}
\end{equation}with $\frac1{\kappa+1}=\frac{n-1}{n}\big( \frac{n-\alpha}{n-1}-\frac1{\theta+1}\big).$
Note: if $f\in L^p_{loc}(\partial\mathbb{R}^{n}_+)$, then $u\in L^{\theta+1}_{loc}(\partial\mathbb{R}^{n}_+).$ In next section, we
will show that if $(u, v)$ is a pair of nonnegative solutions to system \eqref{sys-1} with $u\in
L^{\theta+1}_{loc}(\partial\mathbb{R}^{n}_+),$ then $u\in C^\infty (\partial \mathbb{R}^{n}_+)$ and $v\in
C^\infty(\overline{\mathbb{R}^{n}_+}).$ From now on in this section, we assume both $u,\,  v$ are smooth functions.
Theorem \ref{Classification} follows from  the following theorem.

\begin{thm}\label{sys-critical}
Let $(u,v)$ be a pair of positive smooth solutions to system \eqref{sys-1}. For $1<\alpha<n,$ if
$$
\theta=\frac{n+\alpha-2}{n-\alpha}, \kappa=\frac{n+\alpha}{n-\alpha},
$$
 then $u,v$ must be the following forms on $\partial\mathbb{R}^{n}_+$:
\begin{eqnarray*}
u(\xi)=c_1\big(\frac{1}{|\xi-\xi_0|^2+d^2}\big)^\frac{n-\alpha}2,\\
v(\xi,0)=c_2\big(\frac{1}{|\xi-\xi_0|^2+d^2}\big)^\frac{n-\alpha}2,
\end{eqnarray*} where $c_1, \, c_2, \, d>0, \xi_0\in\partial \mathbb{R}^n_+.$

\end{thm}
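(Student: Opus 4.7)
The plan is to classify $(u,v)$ by the method of moving spheres, adapted from Li--Zhu and Chen--Li--Ou to the coupled system \eqref{sys-1} on the half-space geometry. For each boundary center $\xi_0\in\partial\mathbb{R}^n_+$ and radius $\lambda>0$ I would introduce the Kelvin transforms
\[ u_{\xi_0,\lambda}(y)=\Bigl(\tfrac{\lambda}{|y-\xi_0|}\Bigr)^{n-\alpha}u(y^\ast),\qquad v_{\xi_0,\lambda}(x)=\Bigl(\tfrac{\lambda}{|x-\xi_0|}\Bigr)^{n-\alpha}v(x^\ast), \]
where $y^\ast=\xi_0+\lambda^2(y-\xi_0)/|y-\xi_0|^2$ and $x^\ast$ is defined analogously. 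Since $\xi_0$ lies on the boundary, the inversion preserves both $\mathbb{R}^n_+$ and $\partial\mathbb{R}^n_+$. A direct change of variables, in which the critical exponents $\theta=(n+\alpha-2)/(n-\alpha)$ and $\kappa=(n+\alpha)/(n-\alpha)$ are precisely those for which the bulk Jacobian $(\lambda/|x^\ast-\xi_0|)^{2n}$, the boundary Jacobian $(\lambda/|y^\ast-\xi_0|)^{2(n-1)}$, and the kernel factor $(|x-\xi_0||y-\xi_0|/\lambda^2)^{n-\alpha}$ cancel exactly, shows that $(u_{\xi_0,\lambda},v_{\xi_0,\lambda})$ satisfies the same integral system \eqref{sys-1}.

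The next step is to start the moving spheres: using positivity and smoothness of $u,v$ near $\xi_0$ (to dominate the transformed functions on a small annulus) together with their behavior at infinity (to dominate them far from $\xi_0$), show that for every $\xi_0$ there exists $\lambda_1(\xi_0)>0$ with
\[ u_{\xi_0,\lambda}(y)\le u(y)\ \text{on}\ \partial\mathbb{R}^n_+\setminus B_\lambda(\xi_0),\quad v_{\xi_0,\lambda}(x)\le v(x)\ \text{on}\ \mathbb{R}^n_+\setminus B_\lambda(\xi_0), \]
for all $0<\lambda<\lambda_1(\xi_0)$. The main engine is the elementary kernel inequality
\[ \frac{1}{|x-y|^{n-\alpha}}-\Bigl(\tfrac{\lambda}{|y-\xi_0|}\Bigr)^{n-\alpha}\frac{1}{|x-y^\ast|^{n-\alpha}}\ge 0 \quad\text{whenever}\quad |x-\xi_0|,|y-\xi_0|\ge\lambda, \]
which, fed into \eqref{sys-1}, converts each outside comparison into an integral inequality whose sign is controlled by the inside comparison of the opposite variable, thereby coupling the two inequalities to each other.

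Set $\bar\lambda(\xi_0)=\sup\{\mu>0:u_{\xi_0,\lambda}\le u\text{ and }v_{\xi_0,\lambda}\le v\text{ outside }B_\lambda(\xi_0)\text{ for all }\lambda\in(0,\mu)\}$. A bootstrap based on the strict form of the kernel inequality together with the cross-coupling just described upgrades ``$\le$'' to ``$\equiv$'' at $\bar\lambda(\xi_0)$ whenever this critical value is finite; the alternative $\bar\lambda(\xi_0)=\infty$ is ruled out by a standard asymptotic argument, since a uniform bound $u_{\xi_0,\lambda}\le u$ for every $\lambda$ forces decay of $u$ at infinity incompatible with the positivity and \eqref{sys-1}. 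Consequently $(u,v)$ is Kelvin-invariant at every boundary point, and the calculus lemma of Li--Zhu, extended to continuous functions by Li--Nirenberg, then forces
\[ u(y)=c_1\bigl(d^2+|y-y_0|^2\bigr)^{-(n-\alpha)/2} \]
for some constants $c_1,d>0$ and $y_0\in\partial\mathbb{R}^n_+$. Plugging this profile back into the second equation of \eqref{sys-1} pins down $v(\cdot,0)$ with the same shape and yields the stated theorem.

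The main obstacle I anticipate is orchestrating the two comparisons simultaneously: because the two equations live on different spaces (bulk and boundary) and feed into one another, a single-equation moving-sphere argument does not transfer verbatim. The bootstrap that upgrades ``$\le$'' to equality at the critical radius has to route a strict gain obtained for one variable through the opposite integral kernel to produce a strict gain for the other, and then back again; the correct splitting of ``inside'' versus ``outside'' contributions must be maintained in each of the two integrals throughout, and this coupled-monotonicity step is where the bulk of the technical work will sit.
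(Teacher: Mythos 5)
Your overall strategy coincides with the paper's: Kelvin transforms centered on the boundary, conformal invariance of the system at the critical exponents, a moving-sphere dichotomy, and the Li--Zhu/Li--Nirenberg calculus lemma to extract the bubble profile. Two points differ, one harmlessly and one not. The harmless one: you propose to start the spheres by pointwise comparison (dominating $u_{\xi_0,\lambda}$ on a small annulus via smoothness and at infinity via ``behavior at infinity''), whereas the paper starts them by an integral-norm contraction, estimating $\|u_{x,\lambda}-u\|_{L^k(\Sigma^u_{x,\lambda})}$ and $\|v_{x,\lambda}-v\|_{L^s(\Sigma^v_{x,\lambda})}$ through the HLS inequalities \eqref{HLSD-2}--\eqref{HLSD-3} and absorbing, so that the ``bad sets'' have measure zero once $\|v\|_{L^{2n/(n-\alpha)}(B^+_\lambda)}\|u\|_{L^{2(n-1)/(n-\alpha)}(B^{n-1}_\lambda)}$ is small. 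Your pointwise start can be made to work for smooth positive solutions, but the domination far from $\xi_0$ is not automatic: you need the lower bound $u(y)\ge c|y|^{-(n-\alpha)}$ (and its analogue for $v$), which must be extracted from the integral system itself; this should be stated, not assumed.

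The genuine gap is in your disposal of $\bar\lambda(\xi_0)=\infty$. The claim that $u_{\xi_0,\lambda}\le u$ for all $\lambda$ ``forces decay of $u$ at infinity incompatible with positivity'' is backwards: it forces $\liminf_{|\xi|\to\infty}|\xi|^{n-\alpha}u(\xi)\ge\lambda^{n-\alpha}u(\xi_0)$ for every $\lambda$, i.e.\ $u$ decays \emph{strictly slower} than $|\xi|^{-(n-\alpha)}$, and this is not by itself contradictory at a single center (it only says $v^\kappa\notin L^1$). The correct logic, as in the paper, is global: if $\bar\lambda(x_0)<\infty$ for one $x_0$, then comparing $\liminf_{|\xi|\to\infty}|\xi|^{n-\alpha}u(\xi)$ computed from the equality at $x_0$ with the inequality at any other $x$ shows $\bar\lambda(x)<\infty$ for all $x$; in the remaining case $\bar\lambda\equiv\infty$ one invokes the first calculus lemma to conclude $u$ is \emph{constant} on $\partial\mathbb{R}^n_+$ and the half-space lemma to conclude $v=v(x_n)$, and then the representation $v(0,t)=C_0^\gamma\int_{\partial\mathbb{R}^n_+}(|y|^2+t^2)^{-(n-\alpha)/2}dy$ diverges because $n-\alpha<n-1$. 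Without this constancy-plus-divergence step (and the ``finite somewhere implies finite everywhere'' step), the case analysis in your proposal does not close.
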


We prove the above theorem via the method of moving sphere, which is introduced by Li and Zhu in \cite{LZ1995}.

For $R>0,$ denote
$$B_R(x)=\{y\in\mathbb{R}^n\,: \,|y-x|<R, x\in\mathbb{R}^n\}, \, B_R^{n-1}(x)=\{y\in\partial\mathbb{R}^{n}_+\,: \,|y-x|<R, x\in\partial\mathbb{R}^{n}_+\},$$
$$B_R^+(x)=\{y=(y_1,y_2,\cdots,y_n)\in B_R(x)\,: \, y_n>0,x\in\partial\mathbb{R}^{n}_+\},$$
$$\Sigma_{x,R}^n=\mathbb{R}^n_+\backslash\overline{B_R^+(x)}, \,~
\Sigma_{x,R}^{n-1}=\partial\mathbb{R}^{n}_+\backslash\overline{B_R^{n-1}(x)}.$$
For $x\in\partial \mathbb{R}^n_+$ and $\lambda>0,$ we define the following transform:
\[
\omega_{x,\lambda}(\xi)=\big(\frac\lambda{|\xi-x|}\big)^{n-\alpha}\omega(\xi^{x,\lambda}),\quad\xi\in
\overline{\mathbb{R}^n_+}\,\backslash\{x\},
\]
where
$$\xi^{x,\lambda}=x+\frac{\lambda^2(\xi-x)}{|\xi-x|^2}$$
is the Kelvin transformation of $\xi$ with respect to $B_\lambda^+ (x)$. Also we write $\omega^k_{x,\lambda}(\xi):=
(\omega_{x,\lambda}(\xi))^k$.

First we have the following lemma.
\begin{lm}\label{Kelvin formula}
Let $1<\alpha<n, 0<\theta,\kappa<\infty$ and $(u,v)$ be a pair of positive solutions to system \eqref{sys-1}. Then, for any $x\in
\partial\mathbb{R}^{n}_+$,
\begin{eqnarray}
u_{x,\lambda}(\xi)&=&\int_{\mathbb{R}^{n}_+}\frac{v_{x,\lambda}^{\kappa}(\eta)}
{|\xi-\eta|^{n-\alpha}}\big(\frac\lambda{|\eta-x|}\big)^{\tau_1}d\eta,\quad \forall \, \xi\in\partial\mathbb{R}^{n}_+,\label{K-1}\\
v_{x,\lambda}(\eta)&=&\int_{\partial\mathbb{R}^{n}_+}\frac{u_{x,\lambda}^\theta(\xi)}{|\xi-\eta|^{n-\alpha}}
\big(\frac\lambda{|\xi-x|}\big)^{\tau_2}d\xi,\quad \forall \, \eta\in\mathbb{R}_+^{n},\label{K-2}
\end{eqnarray} where $\tau_1= n+\alpha-\kappa(n-\alpha),\tau_2=n+\alpha-2-\theta(n-\alpha).$
Moreover,
\begin{eqnarray}
u_{x,\lambda}(\xi)-u(\xi)&=&\int_{\Sigma_{x,\lambda}^n}P(x,\lambda;\xi,\eta)
\big[\big(\frac{\lambda}{|\eta-x|}\big)^{\tau_1} v_{x,\lambda}^{\kappa}(\eta)-v^{\kappa}(\eta)\big]d\eta,~~~~~~~~~~~~~~~~~\label{K-3}\\
v_{x,\lambda}(\eta)-v(\eta)&=&\int_{\Sigma_{x,\lambda}^{n-1}}P(x,\lambda;\eta,\xi)
\big[\big(\frac{\lambda}{|\xi-x|}\big)^{\tau_2}u_{x,\lambda}^\theta (\xi)-u^\theta(\xi)\big]d\xi,~~~~~~~~~~~~~~~~~~~~~~~~\label{K-4}
\end{eqnarray} where
\[
P(x,\lambda;\xi,\eta)=\frac1{|\xi-\eta|^{n-\alpha}}
-\big(\frac\lambda{|\xi-x|}\big)^{n-\alpha}\frac1{|\xi^{x,\lambda}-\eta|^{n-\alpha}},
\]and
\[
P(x,\lambda;\xi,\eta)>0,~~~~~~~~~~~~\text{for}~\forall \,\xi\in\Sigma_{x,\lambda}^{n-1},\, \eta\in\Sigma_{x,\lambda}^n,\lambda>0.
\]
\end{lm}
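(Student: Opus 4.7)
The strategy is to push the Kelvin inversion $\xi\mapsto\xi^{x,\lambda}$ through each of the two equations in \eqref{sys-1}. The two identities that drive every computation are
\[
|\xi^{x,\lambda}-\eta^{x,\lambda}|=\frac{\lambda^{2}|\xi-\eta|}{|\xi-x|\,|\eta-x|},\qquad |\xi^{x,\lambda}-x|=\frac{\lambda^{2}}{|\xi-x|},
\]
together with the Jacobians of the inversion, which equal $(\lambda/|\eta-x|)^{2n}$ on $\mathbb{R}^n_+$ and $(\lambda/|\xi-x|)^{2(n-1)}$ on $\partial\mathbb{R}^n_+$ (the pole lies on the boundary, so the inversion preserves both the half-space and its boundary).

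For \eqref{K-1}, evaluate the first equation of \eqref{sys-1} at $\xi^{x,\lambda}$ and perform the change of variable $y=\eta^{x,\lambda}$, using $v(\eta^{x,\lambda})=(|\eta-x|/\lambda)^{n-\alpha}v_{x,\lambda}(\eta)$ to re-expand $v^\kappa$. Collecting the powers of $|\eta-x|$ and $\lambda$ produces the weight $(\lambda/|\eta-x|)^{n+\alpha-\kappa(n-\alpha)}$ and an overall prefactor $(|\xi-x|/\lambda)^{n-\alpha}$; multiplying both sides by $(\lambda/|\xi-x|)^{n-\alpha}$ converts $u(\xi^{x,\lambda})$ into $u_{x,\lambda}(\xi)$ and yields \eqref{K-1} with $\tau_{1}=n+\alpha-\kappa(n-\alpha)$. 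Identity \eqref{K-2} is proved by the same procedure applied to the second equation of \eqref{sys-1}; the only difference is that the boundary Jacobian contributes $(\lambda/|\xi-x|)^{2(n-1)}$ rather than $(\lambda/|\eta-x|)^{2n}$, which is precisely why the exponent shifts to $\tau_{2}=n+\alpha-2-\theta(n-\alpha)$.

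To obtain \eqref{K-3}, split each of the representations for $u(\xi)$ (from \eqref{sys-1}) and for $u_{x,\lambda}(\xi)$ (from \eqref{K-1}) into integrals over $B^{+}_{\lambda}(x)$ and $\Sigma^{n}_{x,\lambda}$, and apply the Kelvin change of variable $\zeta\mapsto\zeta^{x,\lambda}$ to the two interior pieces so that everything is an integral over $\Sigma^{n}_{x,\lambda}$. Using the consequence $|\xi-\zeta^{x,\lambda}|=|\xi^{x,\lambda}-\zeta|\cdot|\xi-x|/|\zeta-x|$ of the distance identity, the interior part of $\int v^{\kappa}/|\xi-\eta|^{n-\alpha}$ becomes $(\lambda/|\xi-x|)^{n-\alpha}\int_{\Sigma^{n}_{x,\lambda}}(\lambda/|\eta-x|)^{\tau_{1}}v_{x,\lambda}^{\kappa}(\eta)|\xi^{x,\lambda}-\eta|^{\alpha-n}\,d\eta$, while the interior part of the weighted Kelvin integral becomes the same expression with $v^{\kappa}$ in place of $(\lambda/|\eta-x|)^{\tau_{1}}v_{x,\lambda}^{\kappa}$. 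Subtracting and regrouping produces exactly the kernel $P(x,\lambda;\xi,\eta)$ acting on the difference $(\lambda/|\eta-x|)^{\tau_{1}}v_{x,\lambda}^{\kappa}(\eta)-v^{\kappa}(\eta)$. Identity \eqref{K-4} is obtained in the same way from \eqref{K-2}.

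The positivity of $P$ reduces to the algebraic identity
\[
|\xi-x|^{2}\,|\xi^{x,\lambda}-\eta|^{2}-\lambda^{2}|\xi-\eta|^{2}=\bigl(|\xi-x|^{2}-\lambda^{2}\bigr)\bigl(|\eta-x|^{2}-\lambda^{2}\bigr),
\]
proved by direct expansion; on $\Sigma^{n-1}_{x,\lambda}\times\Sigma^{n}_{x,\lambda}$ both factors on the right are strictly positive, hence $|\xi-x|\,|\xi^{x,\lambda}-\eta|>\lambda\,|\xi-\eta|$, and raising to the $(n-\alpha)$-th power gives $P>0$. The main obstacle I expect is bookkeeping in the second step: because we work off the critical exponent, the weights $(\lambda/|\eta-x|)^{\tau_{1}}$ and $(\lambda/|\xi-x|)^{\tau_{2}}$ do not disappear (as they would in the conformal case), and every $\lambda$-power surviving the Kelvin pull-back must be tracked through both the distance identity and the Jacobian for the final kernel to combine exactly into $P$ times the expected bracket.
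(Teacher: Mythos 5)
Your proposal is correct and follows essentially the same route as the paper: Kelvin inversion pushed through both equations of \eqref{sys-1} using the distance identity $|\xi^{x,\lambda}-\eta^{x,\lambda}|=\lambda^{2}|\xi-\eta|/(|\xi-x|\,|\eta-x|)$ and the Jacobians $(\lambda/|\eta-x|)^{2n}$ and $(\lambda/|\xi-x|)^{2(n-1)}$, followed by the interior/exterior split over $B^{+}_{\lambda}(x)$ and $\Sigma^{n}_{x,\lambda}$ and the regrouping that assembles the kernel $P$; your exponent bookkeeping for $\tau_{1}$ and $\tau_{2}$ checks out. The one place you diverge is the positivity of $P$, where your factorization $|\xi-x|^{2}|\xi^{x,\lambda}-\eta|^{2}-\lambda^{2}|\xi-\eta|^{2}=(|\xi-x|^{2}-\lambda^{2})(|\eta-x|^{2}-\lambda^{2})$ is correct and in fact gives a cleaner and more transparently airtight argument than the paper's chain of term-by-term comparisons.
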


The proof of Lemma \ref{Kelvin formula} is similar to that in \cite{Li2004}.  Similar computation will also be used to derive an  analog inequality (\ref{HLSD-2-ball}) on a ball from inequality (\ref{HLSD-2}) in Section \ref{subsect 5.1}.

\begin{proof}
For any $x \in\partial\mathbb{R}^n_+,\eta\in\mathbb{R}^n_+$ and $\lambda>0$, let
$$y=\eta^{x,\lambda}=x+\frac{\lambda^2(\eta-x)}{|\eta-x|^2}.$$
The $n$ dimensional volume  forms in $y$ variable and $\eta$ variable are related by
\[dy=\big(\frac{\lambda}{|\eta-x|}\big)^{2n}d\eta. \]
To simplify the calculations, we write
\begin{eqnarray*}
A^+(\xi^{x,\lambda})&=&\int_{\Sigma^{n}_{x,\lambda}}\frac{v^\kappa(y)}{|\xi^{x,\lambda}-y|^{n-\alpha}}dy ,\quad
A^-(\xi^{x,\lambda})=\int_{B^+_\lambda(x)}\frac{v^\kappa(y)}{|\xi^{x,\lambda}-y|^{n-\alpha}}dy.
\end{eqnarray*}
Thus, from \eqref{sys-1} we can rewrite $u$ as follows:
\[
 u(\xi^{x,\lambda})=A^+(\xi^{x,\lambda})+A^-(\xi^{x,\lambda})
\]
for $x,\xi\in \partial \mathbb{R}_+^n$. Direct
calculation yields
\begin{eqnarray*}
A^+(\xi^{x,\lambda})&=&\int_{\Sigma^{n}_{x,\lambda}}\frac{v^\kappa(y)}{|\xi^{x,\lambda}-y|^{n-\alpha}}dy
=\int_{B^+_\lambda(x)}\frac{v^\kappa(\eta^{x,\lambda})}{|\xi^{x,\lambda}-\eta^{x,\lambda}|^{n-\alpha}}
\big(\frac{\lambda}{|\eta-x|}\big)^{2n}d\eta\\
&=&\int_{B^+_\lambda(x)}\frac{v^\kappa_{x,\lambda}(\eta)}{|\xi^{x,\lambda}-\eta^{x,\lambda}|^{n-\alpha}}
\big(\frac{\lambda}{|\eta-x|}\big)^{2n-\kappa(n-\alpha)}d\eta.
\end{eqnarray*}
Note that:
\[\frac{|\eta-x|}\lambda\frac{|\xi-x|}\lambda|\xi^{x,\lambda}-\eta^{x,\lambda}|=|\xi-\eta|.
\]
Thus,
\begin{eqnarray*}
A^+_{x,\lambda}(\xi)&:=&\big(\frac\lambda{|\xi-x|}\big)^{n-\alpha}A^+(\xi^{x,\lambda})\\
&=&\big(\frac\lambda{|\xi-x|}\big)^{n-\alpha}
\int_{B^+_\lambda(x)}\frac{v^\kappa_{x,\lambda}(\eta)}{|\xi^{x,\lambda}-\eta^{x,\lambda}|^{n-\alpha}}
\big(\frac{\lambda}{|\eta-x|}\big)^{2n-\kappa(n-\alpha)}d\eta\\
&=&\int_{B^+_\lambda(x)}\frac{v^\kappa_{x,\lambda}(\eta)}{|\xi-\eta|^{n-\alpha}}\big(\frac{\lambda}{|\eta-x|}\big)^{\tau_1}
d\eta.
\end{eqnarray*}
Similarly, we have
\begin{eqnarray*}
A^-_{x,\lambda}(\xi)&:=&\big(\frac\lambda{|\xi-x|}\big)^{n-\alpha}A^-(\xi^{x,\lambda})
=\int_{\Sigma^{n}_{x,\lambda}}\frac{v^\kappa_{x,\lambda}(\eta)}{|\xi-\eta|^{n-\alpha}}\big(\frac{\lambda}{|\eta-x|}\big)^{\tau_1}
d\eta.
\end{eqnarray*}
Hence,
\begin{eqnarray*}
u_{x,\lambda}(\xi) &=& A^+_{x,\lambda}(\xi)+A^-_{x,\lambda}(\xi)
= \int_{\mathbb{R}^n_+}\frac{v^\kappa_{x,\lambda}(\eta)}{|\xi-\eta|^{n-\alpha}}\big(\frac{\lambda}{|\eta-x|}\big)^{\tau_1}
d\eta.
\end{eqnarray*}  Identity \eqref{K-1} is established.

It also follows that
\begin{eqnarray*}
u_{x,\lambda}(\xi)-u(\xi)
&=&A^+_{x,\lambda}(\xi)+A^-_{x,\lambda}(\xi)
-\big(A^+(\xi)+A^-(\xi)\big)\\
&=& (A^-_{x,\lambda}(\xi)- A^+(\xi) )+(A^+_{x,\lambda}(\xi)-A^-(\xi))\\
&=&\int_{\Sigma^{n}_{x,\lambda}} \frac1{|\xi-\eta|^{n-\alpha}} \big[\big(\frac{\lambda}{|\eta-x|}\big)^{\tau_1} v^\kappa_{x,\lambda}(\eta)-v^\kappa(\eta)\big]d\eta\\
& &+(A^+_{x,\lambda}(\xi)-A^-(\xi)).
\end{eqnarray*}
On the other hand,
\begin{eqnarray*}
A^+_{x,\lambda}(\xi) &=&\big(\frac\lambda{|\xi-x|}\big)^{n-\alpha}A^+(\xi^{x,\lambda})\\
&=&\big(\frac\lambda{|\xi-x|}\big)^{n-\alpha}
\int_{\Sigma^{n}_{x,\lambda}}\frac{v^\kappa(y)}{|\xi^{x,\lambda}-y|^{n-\alpha}}dy,
\end{eqnarray*} and
\begin{eqnarray*}
A^-(\xi)&=&A^-((\xi^{x,\lambda})^{x,\lambda})=\big(\frac\lambda{|\xi^{x,\lambda}-x|}\big)^{\alpha-n}A^-_{x,\lambda}(\xi^{x,\lambda})\\
&=&\big(\frac\lambda{|\xi^{x,\lambda}-x|}\big)^{\alpha-n}
\int_{\Sigma^{n}_{x,\lambda}}\frac{v^\kappa_{x,\lambda}(\eta)}{|\xi^{x,\lambda}-\eta|^{n-\alpha}}\big(\frac{\lambda}{|\eta-x|}\big)^{\tau_1}
d\eta\\
&=&\big(\frac\lambda{|\xi-x|}\big)^{n-\alpha}
\int_{\Sigma^{n}_{x,\lambda}}\frac{v^\kappa_{x,\lambda}(\eta)}{|\xi^{x,\lambda}-\eta|^{n-\alpha}}\big(\frac{\lambda}{|\eta-x|}\big)^{\tau_1}
d\eta.
\end{eqnarray*}
Combining the above computations, we have identity \eqref{K-3}.
\eqref{K-2} and \eqref{K-4} can be obtained in the same way.

 Now, for $\xi\in
\Sigma^{n-1}_{x,\lambda},\eta\in
\Sigma^{n}_{x,\lambda} $  and $\lambda>0$, we have
\begin{eqnarray*}
P(x,\lambda;\xi,\eta)&= &|\xi-\eta|^{\alpha-n}-\big(\frac{|\xi-x|}\lambda\big)^{\alpha-n}|\xi^{x,\lambda}-\eta|^{\alpha-n}\\
&=&|\xi-\eta|^{\alpha-n}-\frac1{\lambda^{\alpha-n}}\big(\big|\frac{\lambda^2(\xi-x)}{|\xi-x|}+|\xi-x|(x-\eta)\big|^2\big)^{\frac {\alpha-n}2}\\
&=&|\xi-\eta|^{\alpha-n}-\frac1{\lambda^{\alpha-n}}\big[ \lambda^4 +2\lambda^2 (\xi-x)(x-\eta)+(\xi-x)^2(x-\eta)^2\big]^{\frac {\alpha-n}2}\\
&>&|\xi-\eta|^{\alpha-n}-\frac1{\lambda^{\alpha-n}}\big[ (x-\eta)^4 +2(x-\eta)^2 (\xi-x)(x-\eta)+(\xi-x)^2(x-\eta)^2\big]^{\frac {\alpha-n}2}\\
&=&|\xi-\eta|^{\alpha-n}-\big(\frac{|x-\eta|}{\lambda}\big)^{\alpha-n}\big[ (x-\eta+\xi-x)^2\big]^{\frac {\alpha-n}2}\\
&=&|\xi-\eta|^{\alpha-n}\big(1-\big(\frac{|x-\eta|}{\lambda}\big)^{{\alpha-n}}\big)>0.
\end{eqnarray*}
Lemma \ref{Kelvin formula} is proved.
\end{proof}

Let $$\gamma:=\frac{n+\alpha-2}{n-\alpha}, \, \, \, \beta:=\frac{n+\alpha}{n-\alpha}.$$ It is clear in Lemma \ref{Kelvin formula}
that $\tau_1=\tau_2=0$ if and only if $\theta=\gamma$ and $\kappa=\beta.$ From now on in this section, we assume that $\theta=\gamma$
and $\kappa=\beta.$

%
Define
\[\Sigma_{x,\lambda}^u=\{\xi\in\Sigma_{x,\lambda}^{n-1}\,|\,u(\xi)<u_{x,\lambda}(\xi)\},\quad\text{and}\quad
\Sigma_{x,\lambda}^v=\{\eta\in\Sigma_{x,\lambda}^{n}\,|\,v(\eta)<v_{x,\lambda}(\eta)\}.
\]
\begin{lm}\label{K-lm-1}
Assume the same conditions on $n, \alpha, \theta,$ and $\kappa$ as those in Theorem \ref{sys-critical}. Then for any $x\in\partial\mathbb{R}^{n}_+$,
there exists $\lambda_0(x)>0$ such that: $\forall \ 0<\lambda<\lambda_0(x),$
\begin{eqnarray*}
u_{x,\lambda}(\xi)&\le& u(\xi),\quad  a.e. \, \mbox{in} \, \, \Sigma_{x,\lambda}^{n-1},\\
v_{x,\lambda}(\eta)&\le& v(\eta),\quad  a.e. \, \mbox{in} \, \, \Sigma_{x,\lambda}^n.
\end{eqnarray*}
\end{lm}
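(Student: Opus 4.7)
The plan is to extract from the identities \eqref{K-3}--\eqref{K-4} of Lemma \ref{Kelvin formula} (in the critical case $\tau_1=\tau_2=0$) a coupled pair of integral inequalities for the positive parts $(u_{x,\lambda}-u)_+$ and $(v_{x,\lambda}-v)_+$, then close the loop through the sharp HLS inequalities of Theorem \ref{HLSD-theo} and Corollary \ref{HLSD-theo-dual}, producing a self-improving bound whose coefficient vanishes as $\lambda\to 0^+$. On $\Sigma_{x,\lambda}^v$ the bracket $v_{x,\lambda}^\kappa-v^\kappa$ is positive, so for $\xi\in \Sigma_{x,\lambda}^u$ I would restrict the integral in \eqref{K-3} to $\Sigma_{x,\lambda}^v$, use $P(x,\lambda;\xi,\eta)\le|\xi-\eta|^{\alpha-n}$, and apply the elementary inequality $v_{x,\lambda}^\kappa-v^\kappa\le\kappa v_{x,\lambda}^{\kappa-1}(v_{x,\lambda}-v)$ (valid since $\kappa\ge 1$) to obtain
\[
0<(u_{x,\lambda}-u)_+(\xi)\le \kappa\int_{\mathbb{R}^n_+}|\xi-\eta|^{\alpha-n}\,v_{x,\lambda}^{\kappa-1}(\eta)\,(v_{x,\lambda}-v)_+(\eta)\,d\eta,
\]
and symmetrically for $(v_{x,\lambda}-v)_+$ via \eqref{K-4}.

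Next I would apply Corollary \ref{HLSD-theo-dual} (with $t=(\kappa+1)/\kappa$, $q=\theta+1$) to the first bound and Theorem \ref{HLSD-theo} (with $p=(\theta+1)/\theta$, $q=\kappa+1$) to the second, and then H\"older each integrand with the split $\tfrac{\kappa-1}{\kappa+1}+\tfrac{1}{\kappa+1}=\tfrac{\kappa}{\kappa+1}$ (respectively for $\theta$). This pulls out factors of $\|v_{x,\lambda}\|_{L^{\kappa+1}(\Sigma_{x,\lambda}^n)}^{\kappa-1}$ and $\|u_{x,\lambda}\|_{L^{\theta+1}(\Sigma_{x,\lambda}^{n-1})}^{\theta-1}$. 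Combining the two resulting inequalities gives
\[
\|(u_{x,\lambda}-u)_+\|_{L^{\theta+1}(\partial\mathbb{R}^n_+)}\le C\,\|v_{x,\lambda}\|_{L^{\kappa+1}(\Sigma_{x,\lambda}^n)}^{\kappa-1}\,\|u_{x,\lambda}\|_{L^{\theta+1}(\Sigma_{x,\lambda}^{n-1})}^{\theta-1}\,\|(u_{x,\lambda}-u)_+\|_{L^{\theta+1}(\partial\mathbb{R}^n_+)},
\]
and analogously with $u,v$ interchanged. If $(u_{x,\lambda}-u)_+\not\equiv 0$, dividing reduces the problem to showing the prefactor can be made strictly less than one.

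For this final step I would invoke the conformal invariance of the critical $L^p$ norms under the Kelvin transform: since $(n-\alpha)(\kappa+1)=2n$ and $(n-\alpha)(\theta+1)=2(n-1)$, the change of variables $y=\eta^{x,\lambda}$ (for the first) respectively on $\partial\mathbb{R}^n_+$ (for the second) gives the exact identities
\[
\|v_{x,\lambda}\|_{L^{\kappa+1}(\Sigma_{x,\lambda}^n)}=\|v\|_{L^{\kappa+1}(B_\lambda^+(x))},\qquad \|u_{x,\lambda}\|_{L^{\theta+1}(\Sigma_{x,\lambda}^{n-1})}=\|u\|_{L^{\theta+1}(B_\lambda^{n-1}(x))}.
\]
Smoothness of $u$ and $v$ forces both right-hand sides to $0$ as $\lambda\to 0^+$. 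Thus for $\lambda<\lambda_0(x)$ sufficiently small the prefactor is below $1$, forcing $(u_{x,\lambda}-u)_+\equiv 0$ a.e.\ on $\Sigma_{x,\lambda}^{n-1}$; plugging this back into the companion estimate yields $(v_{x,\lambda}-v)_+\equiv 0$ a.e.\ on $\Sigma_{x,\lambda}^n$.

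The main obstacle is bookkeeping for the finiteness of all norms involved, since we are only given $u\in L^{\theta+1}_{\mathrm{loc}}$ and have no a priori global integrability of $u$ or $v$ at the critical exponents. This is exactly what the conformal invariance identities handle: they convert each norm over the unbounded exterior $\Sigma_{x,\lambda}^{n},\Sigma_{x,\lambda}^{n-1}$ into a norm over the bounded ball $B_\lambda^+(x)$ or $B_\lambda^{n-1}(x)$, where smoothness of $u$ and $v$ gives both finiteness and the crucial smallness as $\lambda\to 0^+$.
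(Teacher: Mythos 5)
Your proposal is correct and follows essentially the same route as the paper's proof: the convexity/mean-value bound on $v_{x,\lambda}^{\kappa}-v^{\kappa}$ combined with $P\le |\xi-\eta|^{\alpha-n}$, the two HLS inequalities plus H\"older to close a self-improving estimate, the conformal-invariance identities converting exterior critical norms into $\|v\|_{L^{\kappa+1}(B_\lambda^+(x))}$ and $\|u\|_{L^{\theta+1}(B_\lambda^{n-1}(x))}$, and smallness of these for small $\lambda$. The only cosmetic difference is that you fix the particular exponent pair $t=(\kappa+1)/\kappa$ (so $k=\theta+1$, $s=\kappa+1$), which is one admissible instance of the one-parameter family $t\in(1,n/\alpha)$ the paper works with.
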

\begin{proof}
For  $\xi\in\Sigma_{x,\lambda}^{u}$, we have, via  \eqref{K-3} and  mean value theorem, that
\begin{eqnarray*}
0\le u_{x,\lambda}(\xi)-u(\xi)&=&\int_{\Sigma_{x,\lambda}^n}P(x,\lambda;\xi,\eta)
\big[v_{x,\lambda}^{\beta}(\eta)-v^{\beta}(\eta)\big]d\eta,\\
&\le &\int_{\Sigma_{x,\lambda}^v }P(x,\lambda;\xi,\eta)
\big[v_{x,\lambda}^{\beta}(\eta)-v^{\beta}(\eta)\big]d\eta,\\
&\le&\int_{\Sigma_{x,\lambda}^v }\frac{v_{x,\lambda}^{\beta}(\eta)-v^{\beta}(\eta)}
{|\xi-\eta|^{n-\alpha}}d\eta,\\
&=&\beta\int_{\Sigma_{x,\lambda}^v }\frac{\phi_\lambda(v)^{\beta-1}(v_{x,\lambda}(\eta)-v(\eta))}
{|\xi-\eta|^{n-\alpha}}d\eta,\\
&\le&\beta\int_{\Sigma_{x,\lambda}^v}\frac{v_{x,\lambda}^{\beta-1}(\eta)(v_{x,\lambda}(\eta)-v(\eta)) }{|\xi-\eta|^{n-\alpha}}d\eta,
\end{eqnarray*} where $v(\eta)\le\phi_\lambda(v)\le v_{x,\lambda}(\eta)$ on $\Sigma_{x,\lambda}^v$.

For $t \in( 1, \frac n\alpha),$  take $k=\frac{(n-1)t}{n-\alpha t}, s=\frac{nt}{n-\alpha t}$.  Using inequality \eqref{HLSD-3}, the above inequality yields
\begin{eqnarray*}
\|(u_{x,\lambda}-u)_+\|_{L^k(\partial\mathbb{R}^{n}_+)}
&\le&c\|v_{x,\lambda}^{\beta-1}(v_{x,\lambda}-v)_+\|_{L^t(\mathbb{R}^{n}_+)},
\end{eqnarray*}
where $f_+(x)=\max(f(x), 0)$.  Note $\beta-1= 2\alpha/(n-\alpha)$. By H\"{o}lder inequality, we have
\begin{eqnarray*}
\|v_{x,\lambda}^{\beta-1}(v_{x,\lambda}-v)\|_{L^t(\Sigma_{x,\lambda}^v)}
\le\|v_{x,\lambda}\|^{\beta-1}_{L^{\frac{2n}{n-\alpha}}(\Sigma_{x,\lambda}^v)} \|v_{x,\lambda}-v\|_{L^s(\Sigma_{x,\lambda}^v)}.
\end{eqnarray*}
Thus
\begin{eqnarray}\label{K-5}
\|u_{x,\lambda}-u\|_{L^k(\Sigma_{x,\lambda}^{u})} \le
c\|v_{x,\lambda}\|^{\beta-1}_{L^{\frac{2n}{n-\alpha}}(\Sigma_{x,\lambda}^v)} \|v_{x,\lambda}-v\|_{L^s(\Sigma_{x,\lambda}^v)}.
\end{eqnarray}
On the other hand, for any $\eta\in\Sigma_{x,\lambda}^{v}$, we know from \eqref{K-4} that
\begin{eqnarray*}
v_{x,\lambda}(\eta)-v(\eta) &\le&\gamma\int_{\Sigma_{x,\lambda}^u}\frac{u_{x,\lambda}^{\gamma-1}(\xi)(u_{x,\lambda}(\xi)-u(\xi))
}{|\eta-\xi|^{n-\alpha}}d\xi,
\end{eqnarray*}
Note for $1<t<\frac n\alpha$, $s=\frac {nt}{n-\alpha t}>\frac{n}{n-\alpha}$ and $\frac{(n-1)s}{n+(\alpha-1)s}=\frac{(n-1)t}{n-t}$.
Similarly, using HLS inequality \eqref{HLSD-2} and H\"{o}lder inequality, we have
 \begin{eqnarray}\label{K-6}
\|v_{x,\lambda}-v\|_{L^s(\Sigma_{x,\lambda}^v)}
&\le&c\|u_{x,\lambda}^{\gamma-1}(u_{x,\lambda}-u)\|_{L^{\frac{(n-1)s}{n+(\alpha-1)s}}
(\Sigma_{x,\lambda}^u)}\nonumber\\
&\le&c\|u_{x,\lambda}\|^{\gamma-1}_{L^{\frac{2(n-1)}{n-\alpha}} (\Sigma_{x,\lambda}^u)}\|u_{x,\lambda}-u\|_{L^{k}
(\Sigma_{x,\lambda}^u)}.
\end{eqnarray}
 Combining \eqref{K-5} with \eqref{K-6}, we obtain
\begin{eqnarray}
& & \|u_{x,\lambda}-u\|_{L^k(\Sigma_{x,\lambda}^{u})}\nonumber\\
&\le&c\|v_{x,\lambda}\|^{\beta-1}_{L^{\frac{2n}{n-\alpha}}(\Sigma_{x,\lambda}^v)}
 \|u_{x,\lambda}\|^{\gamma-1}_{L^{\frac{2(n-1)}{n-\alpha}}
(\Sigma_{x,\lambda}^u)}\|u_{x,\lambda}-u\|_{L^{k}
(\Sigma_{x,\lambda}^u)}\nonumber\\
&\le&c\|v_{x,\lambda}\|^{\beta-1}_{L^{\frac{2n}{n-\alpha}}(\Sigma_{x,\lambda}^n)}
 \|u_{x,\lambda}\|^{\gamma-1}_{L^{\frac{2(n-1)}{n-\alpha}}
(\Sigma_{x,\lambda}^{n-1})}\|u_{x,\lambda}-u\|_{L^{k}
(\Sigma_{x,\lambda}^u)}\nonumber\\
&=&c\|v\|^{\beta-1}_{L^{\frac{2n}{n-\alpha}}(B^+_{\lambda}(x))}
 \|u\|^{\gamma-1}_{L^{\frac{2(n-1)}{n-\alpha}}
(B_\lambda^{n-1}(x))}\|u_{x,\lambda}-u\|_{L^{k} (\Sigma_{x,\lambda}^u)}.~~~~~~~~~~~  ~~~~~~~~~~~~\label{K-7}
\end{eqnarray}

Similarly, we have
\begin{eqnarray}
\|v_{x,\lambda}-v\|_{L^s(\Sigma_{x,\lambda}^v)} &\le&c\|v\|^{\beta-1}_{L^{\frac{2n}{n-\alpha}}(B^+_{\lambda}(x))}
 \|u\|^{\gamma-1}_{L^{\frac{2(n-1)}{n-\alpha}}
(B_\lambda^{n-1}(x))}\|v_{x,\lambda}-v\|_{L^s(\Sigma_{x,\lambda}^v)}.\nonumber\\~~~~~~~~~~~ ~~~~~~~~~~~~\label{K-8}
\end{eqnarray}
Since $u\in L^{\frac{2(n-1)}{n-\alpha}}_{loc}(\partial\mathbb{R}^{n}_+)$ and $v\in L^{\frac{2n}{n-\alpha}}_{loc}
(\mathbb{R}^{n}_+),$ we can choose $\lambda_0$ small enough such that for $0<\lambda<\lambda_0$, we have
\[c\|v\|^{\beta-1}_{L^{\frac{2n}{n-\alpha}}(B^+_{\lambda}(x))}
 \|u\|^{\gamma-1}_{L^{\frac{2(n-1)}{n-\alpha}}
(B_\lambda^{n-1}(x))}\le\frac12.
\]
 Combining the above with \eqref{K-7} and \eqref{K-8}, we get
\begin{eqnarray*}
\|u_{x,\lambda}-u\|_{L^k(\Sigma_{x,\lambda}^{u})}&\le&\frac12\|u_{x,\lambda}-u\|_{L^k(\Sigma_{x,\lambda}^{u})},\\
\|v_{x,\lambda}-v\|_{L^s(\Sigma_{x,\lambda}^v)} &\le&\frac12\|v_{x,\lambda}-v\|_{L^s(\Sigma_{x,\lambda}^v)},
\end{eqnarray*}
which imply that $\|u_{x,\lambda}-u\|_{L^k(\Sigma_{x,\lambda}^{u})}
=\|v_{x,\lambda}-v\|_{L^s(\Sigma_{x,\lambda}^v)}=0$.
That is, the measures for both $\Sigma_{x,\lambda}^{u}$ and $\Sigma_{x,\lambda}^v$ are zero. We complete the proof of the lemma.
\end{proof}

\medskip

Define, module sets with zero measure,
\[\bar{\lambda}(x)=sup\{\mu>0\,|\,u_{x,\lambda}(\xi)\le u (\xi), \mbox{and} \ v_{x,\lambda}(\eta)\le v
(\eta), \  \forall \lambda\in (0, \mu), \forall \xi\in\Sigma_{x,\lambda}^{n-1}, \forall \eta\in \Sigma_{x,\lambda}^{n}\}.
\]
We will show: if the sphere stops, then we have symmetric properties for solutions.

\begin{lm}\label{K-lm-2}For some $x_0\in
\partial\mathbb{R}^{n}_+$,
if $\bar{\lambda}(x_0)<\infty$, then
\begin{eqnarray*}
u_{x_0,\bar{\lambda}(x_0)}&=&u\quad \text{on}~\partial\mathbb{R}^{n}_+,\\
v_{x_0,\bar{\lambda}(x_0)}&=&v\quad \text{on}~\mathbb{R}^{n}_+.
\end{eqnarray*}
\end{lm}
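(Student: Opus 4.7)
The plan is a contradiction argument. Suppose $\bar{\lambda}:=\bar{\lambda}(x_0)<\infty$ but either $u_{x_0,\bar{\lambda}}\not\equiv u$ on $\partial\mathbb{R}^n_+$ or $v_{x_0,\bar{\lambda}}\not\equiv v$ on $\mathbb{R}^n_+$. Since the weak inequalities $u_{x_0,\lambda}\le u$ and $v_{x_0,\lambda}\le v$ persist in the limit $\lambda\uparrow\bar{\lambda}$ by continuity and the definition of $\bar{\lambda}$, at least one of the comparisons is strict on a set of positive measure.

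First, I would promote these almost-everywhere weak inequalities to pointwise strict inequalities using the identities \eqref{K-3}--\eqref{K-4}, which at the critical exponent ($\tau_1=\tau_2=0$) read
\begin{equation*}
u_{x_0,\bar{\lambda}}(\xi)-u(\xi)=\int_{\Sigma^n_{x_0,\bar{\lambda}}}P(x_0,\bar{\lambda};\xi,\eta)\bigl[v^\beta_{x_0,\bar{\lambda}}(\eta)-v^\beta(\eta)\bigr]d\eta,
\end{equation*}
and symmetrically with $u$ and $v$ interchanged. Because $P>0$ on the relevant domain by Lemma \ref{Kelvin formula}, a nonzero piece of negativity in $v^\beta_{x_0,\bar{\lambda}}-v^\beta$ immediately propagates to make $u_{x_0,\bar{\lambda}}(\xi)<u(\xi)$ at \emph{every} point of $\Sigma^{n-1}_{x_0,\bar{\lambda}}$; feeding this back into the dual identity then yields $v_{x_0,\bar{\lambda}}(\eta)<v(\eta)$ pointwise on $\Sigma^n_{x_0,\bar{\lambda}}$. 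Smoothness of $u$ and $v$ (established in Section 4) makes this pointwise conclusion meaningful.

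Next, I would push the moving sphere past $\bar{\lambda}$. For small $\varepsilon>0$ set $\lambda=\bar{\lambda}+\varepsilon$ and re-run the Hardy--Littlewood--Sobolev and H\"older chain used in the proof of Lemma \ref{K-lm-1}, but with the roles of $B^+_\lambda(x_0)$, $B^{n-1}_\lambda(x_0)$ in \eqref{K-7}--\eqref{K-8} replaced by the violation sets $\Sigma^v_{x_0,\lambda}$ and $\Sigma^u_{x_0,\lambda}$. The resulting self-improvement inequalities
\begin{equation*}
\|u_{x_0,\lambda}-u\|_{L^k(\Sigma^u_{x_0,\lambda})}\le c\,\|v\|^{\beta-1}_{L^{2n/(n-\alpha)}(\Sigma^v_{x_0,\lambda})}\|u\|^{\gamma-1}_{L^{2(n-1)/(n-\alpha)}(\Sigma^u_{x_0,\lambda})}\|u_{x_0,\lambda}-u\|_{L^k(\Sigma^u_{x_0,\lambda})},
\end{equation*}
and its analog for $v$, will close the argument once the coefficient is shown to be smaller than $1/2$ for $\varepsilon$ small. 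This is where the strict pointwise inequalities from the previous step pay off: on any fixed compact subset of $\Sigma^{n-1}_{x_0,\bar{\lambda}}$ (resp.\ $\Sigma^n_{x_0,\bar{\lambda}}$), continuity in $\lambda$ shows the inequality persists for $\lambda$ close to $\bar{\lambda}$, so $\Sigma^u_{x_0,\lambda}$ and $\Sigma^v_{x_0,\lambda}$ are forced into a thin annular layer near $\partial B_{\bar{\lambda}}(x_0)$ plus a far-field tail of vanishing measure. The thin-annular contribution to the relevant $L^{2(n-1)/(n-\alpha)}$- and $L^{2n/(n-\alpha)}$-norms is small by absolute continuity of the integral.

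The main obstacle is precisely handling the far-field tail: one must know enough decay or global integrability of $u,v$ at infinity to guarantee that the norms $\|u\|_{L^{2(n-1)/(n-\alpha)}(\Sigma^u_{x_0,\lambda})}$ and $\|v\|_{L^{2n/(n-\alpha)}(\Sigma^v_{x_0,\lambda})}$ tend to $0$ as $\varepsilon\to 0^+$. This is where the conformal invariance produced by the critical exponents $\theta=\gamma$, $\kappa=\beta$ (equivalently $\tau_1=\tau_2=0$) is essential; it supplies, via the Kelvin transform comparison, the Riesz-potential decay $u(\xi)=O(|\xi|^{-(n-\alpha)})$ and $v(\eta)=O(|\eta|^{-(n-\alpha)})$ needed to control the tail. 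With that in hand, the coefficient above becomes $<1/2$, forcing the violation sets to have measure zero and extending the moving-sphere range past $\bar{\lambda}$, contradicting the maximality of $\bar{\lambda}(x_0)$.
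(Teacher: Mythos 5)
Your overall strategy matches the paper's: argue by contradiction, upgrade the weak inequalities at $\bar{\lambda}$ to strict pointwise inequalities via \eqref{K-3}--\eqref{K-4} and the positivity of $P$, confine the violation sets for $\lambda$ slightly beyond $\bar{\lambda}$ to a thin annulus plus a far-field region, and close with the HLS--H\"older self-improvement inequality. The thin-annulus part of your argument is fine. But your treatment of the far-field tail has a genuine gap. You claim that conformal invariance ``supplies the Riesz-potential decay $u(\xi)=O(|\xi|^{-(n-\alpha)})$'' and hence that $\|u\|_{L^{2(n-1)/(n-\alpha)}}$ and $\|v\|_{L^{2n/(n-\alpha)}}$ over the far field tend to $0$. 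No such upper bound is available at this point: the moving-sphere inequality $u_{x,\lambda}\le u$ only gives the \emph{lower} bound $u(\xi)\ge(\lambda/|\xi-x|)^{n-\alpha}u(\xi^{x,\lambda})$, and the matching upper bound (equivalently $\liminf|\xi|^{n-\alpha}u(\xi)<\infty$) is derived in the paper \emph{from} Lemma \ref{K-lm-2} in the proof of Theorem \ref{sys-critical}, so invoking it here is circular. Moreover only $u\in L^{2(n-1)/(n-\alpha)}_{loc}$ and $v\in L^{2n/(n-\alpha)}_{loc}$ are known, so global smallness of tails of these norms cannot be asserted.

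The correct resolution --- and the point of the paper's proof --- is that no decay at infinity is needed. In the self-improvement inequality the weights produced by the mean value theorem are $v_{x_0,\lambda}^{\beta-1}$ and $u_{x_0,\lambda}^{\gamma-1}$ (not $v^{\beta-1}$, $u^{\gamma-1}$, as you wrote), and the critical Lebesgue norms of $u_{x_0,\lambda}$, $v_{x_0,\lambda}$ over the violation sets equal, by conformal invariance, the norms of $u$, $v$ over the \emph{Kelvin images} of those sets. The image of the far-field region $\mathbb{R}^n_+\setminus B^+_R(x_0)$ is the tiny half-ball $B^+_{\lambda/R}(x_0)$, and the image of the thin annulus $\Sigma^{n}_{x_0,\lambda}\setminus\Sigma^{n}_{x_0,\bar{\lambda}+\delta}$ is $B^+_{\lambda}(x_0)\setminus B^+_{\lambda^2/(\bar{\lambda}+\delta)}(x_0)$; both are bounded sets of small measure, so their contribution is small by local integrability and absolute continuity alone. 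With the norms placed on that side of the Kelvin transform your argument closes; as written, it does not.
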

\begin{proof} \ Denote $\bar{\lambda}=\bar{\lambda}(x_0).$
We only need to show
\begin{eqnarray*}
u_{x_0,\bar{\lambda}}(\xi)&=&u(\xi)\quad
\text{for \ all }~\xi\in\Sigma_{x_0,\bar{\lambda}}^{n-1},\\
v_{x_0,\bar{\lambda}}(\eta)&=&v(\eta)\quad \text{for \ all}~\eta\in \Sigma_{x_0,\bar{\lambda}}^{n}.
\end{eqnarray*}
By the definition of $\bar{\lambda}$, we have
\begin{eqnarray*}
u_{x_0,\bar{\lambda}}(\xi)&\le&u(\xi)\quad
\text{for \ all}~\xi\in\Sigma_{x_0,\bar{\lambda}}^{n-1},\\
v_{x_0,\bar{\lambda}}(\eta)&\le&v(\eta)\quad \text{for \ all}~\eta\in \Sigma_{x_0,\bar{\lambda}}^{n}.
\end{eqnarray*}
If $u_{x_0,\bar{\lambda}}(\xi)\not\equiv u(\xi)$ or $v_{x_0,\bar{\lambda}}(\eta)\not\equiv v(\eta)$, we know from \eqref{K-3} and
\eqref{K-4} (using $x_0,\bar{\lambda}$ to replace $x, \lambda$), that
\begin{eqnarray*}
u_{x_0,\bar{\lambda}}(\xi)&<&u(\xi)\quad
\text{for \ all}~\xi\in\Sigma_{x_0,\bar{\lambda}}^{n-1},\\
v_{x_0,\bar{\lambda}}(\eta)&<&v(\eta)\quad \text{for \ all}~\eta\in \Sigma_{x_0,\bar{\lambda}}^{n}.
\end{eqnarray*}
Thus, for a given large $R$, $\varepsilon_0$ and any $\delta>0$ there exist $c_1,c_2$ such that
 \begin{eqnarray*}
u(\xi)-u_{x_0,\bar{\lambda}}(\xi)&>&c_1\quad
\text{for}~\xi\in\Sigma_{x_0,\bar{\lambda}+\delta}^{n-1}\cap B^{n-1}_R(x_0),\\
v(\eta)-v_{x_0,\bar{\lambda}}(\eta)&>&c_2\quad \text{for}~\eta\in \Sigma_{x_0,\bar{\lambda}+\delta}^{n}\cap B^{+}_R(x_0).
\end{eqnarray*}
By (\ref{K-3}) and (\ref{K-4}), we know that we can choose $\varepsilon<\delta$ sufficiently small so
that for $\lambda\in[\bar{\lambda},\bar{\lambda}+\varepsilon)$,

\begin{eqnarray*}
u(\xi)&\ge& u_{x_0,\lambda}(\xi)\quad
\text{for}~\xi\in\Sigma_{x_0,\bar{\lambda}+\delta}^{n-1}\cap B^{n-1}_R(x_0),\\
v(\eta)&\ge& v_{x_0,\lambda}(\eta)\quad \text{for}~\eta\in \Sigma_{x_0,\bar{\lambda}+\delta}^{n}\cap B^{+}_R(x_0).
\end{eqnarray*}
 These imply that  $\Sigma_{x_0,\lambda}^u$ and $\Sigma_{x_0,\lambda}^v$ have no
intersection with
\[\Sigma_{x_0,\bar{\lambda}+\delta}^{n-1}\cap B^{n-1}_R(x_0) \quad\text{and}\quad
\Sigma_{x_0,\bar{\lambda}+\delta}^{n}\cap B^{+}_R(x_0),
\]respectively.
Thus, $\Sigma_{x_0,\lambda}^u$ is contained in the union of
\[\partial\mathbb{R}^n_+\setminus B^{n-1}_R(x_0)\quad
\text{and}\quad\Sigma_{x_0,\lambda}^{n-1}\backslash\Sigma_{x_0,\bar{\lambda}+\delta}^{n-1};
\] And
$\Sigma_{x_0,\lambda}^v$ is contained in the union of
\[\mathbb{R}^n_+\setminus B^{+}_R(x_0)\quad \text{and}\ \Sigma_{x_0,\lambda}^{n}\backslash\Sigma_{x_0,\bar{\lambda}+\delta}^{n}.
\]

For simplicity, we write $\Omega_{\lambda,R}^{n-1}=(\partial\mathbb{R}^n_+\setminus
B^{n-1}_R(x_0))\cup(\Sigma_{x_0,\lambda}^{n-1}\backslash\Sigma_{x_0,\bar{\lambda}+\delta}^{n-1}),
\Omega_{\lambda,R}^{n}=(\mathbb{R}^n_+\setminus
B^{+}_R(x_0))\cup(\Sigma_{x_0,\lambda}^{n}\backslash\Sigma_{x_0,\bar{\lambda}+\delta}^{n}).$ Moreover, denote
$(\Omega_{\lambda,R}^{n})^*$ and $(\Omega_{\lambda,R}^{n-1})^*$ as the reflection of $\Omega_{\lambda,R}^{n}$ and
$\Omega_{\lambda,R}^{n-1}$ under the Kelvin transformation with respect to  the sphere $\{x: |x-x_0|=\lambda\},$ respectively. That is,
$(\Omega_{\lambda,R}^{n})^*=B_{\varepsilon_1}^+(x_0)\cup(B_{\lambda}^+(x_0)\backslash B_{\lambda^2/(\bar \lambda +\delta)}^+(x_0)),$  and
$(\Omega_{\lambda,R}^{n-1})^*=B_{\varepsilon_1}^{n-1}(x_0)\cup(B_{\lambda}^{n-1}(x_0)\backslash B_{\lambda^2/(\bar \lambda +\delta)}^{n-1}(x_0))$,
where $\varepsilon_1=\lambda/R$ is small as $R\to \infty.$

Similar to \eqref{K-7} and \eqref{K-8}, for $\lambda\in[\bar{\lambda},\bar{\lambda}+\varepsilon),$ we have
\begin{eqnarray}
\|u_{x,\lambda}-u\|_{L^k(\Sigma_{x_0,\lambda}^{u})}
&\le&c\|v_{x,\lambda}\|^{\beta-1}_{L^{\frac{2n}{n-\alpha}}(\Sigma_{x_0,\lambda}^v)}
 \|u_{x,\lambda}\|^{\gamma-1}_{L^{\frac{2(n-1)}{n-\alpha}}
(\Sigma_{x_0,\lambda}^{u})}\nonumber\\
& &\times\|u_{x,\lambda}-u\|_{L^{k} (\Sigma_{x_0,\lambda}^u)},\nonumber\\
&\le&c\|v\|^{\beta-1}_{L^{\frac{2n}{n-\alpha}}((\Omega_{\lambda,R}^{n})^*)}
 \|u\|^{\gamma-1}_{L^{\frac{2(n-1)}{n-\alpha}}
((\Omega_{\lambda,R}^{n-1})^*)}\nonumber\\
& &\times\|u_{x,\lambda}-u\|_{L^{k} (\Sigma_{x_0,\lambda}^u)},
~~~~~~~~~~~  ~~~~~~~~~~~~\label{K-9}\\
\|v_{x,\lambda}-v\|_{L^s(\Sigma_{x_0,\lambda}^v)} &\le&c\|v\|^{\beta-1}_{L^{\frac{2n}{n-\alpha}}((\Omega_{\lambda,R}^{n})^*)}
 \|u\|^{\gamma-1}_{L^{\frac{2(n-1)}{n-\alpha}}
((\Omega_{\lambda,R}^{n-1})^*)}
\nonumber\\
& &\times\|v_{x,\lambda}-v\|_{L^s(\Sigma_{x_0,\lambda}^v)}, ~~~~~~~~~~~ ~~~~~~~~~~~~\label{K-10}
\end{eqnarray}
Since $u\in
L^{\frac{2(n-1)}{n-\alpha}}_{loc}(\partial\mathbb{R}^{n}_+)$ and $v\in L^{\frac{2n}{n-\alpha}}_{loc} (\mathbb{R}^{n}_+),$
we have
\begin{eqnarray*}
\int_{(\Omega_{\lambda,R}^{n-1})^*}u^\frac{2(n-1)}{n-\alpha}(\xi)d\xi<\varepsilon_0 (\varepsilon, \delta)\,\quad
\int_{(\Omega_{\lambda,R}^{n})^*}v^\frac{2n}{n-\alpha}(\eta)d\eta<\varepsilon_0(\varepsilon, \delta).
\end{eqnarray*}
Choose $\varepsilon_0$ small enough (via choosing $\varepsilon, \, \delta$ small enough) such that for $
\lambda\in[\bar{\lambda},\bar{\lambda}+\varepsilon) $,
\[c\|v\|^{\beta-1}_{L^{\frac{2n}{n-\alpha}}((\Omega_{\lambda,R}^{n-1})^*)}
 \|u\|^{\gamma-1}_{L^{\frac{2(n-1)}{n-\alpha}}
((\Omega_{\lambda,R}^{n-1})^*)}<\frac12.
\]
Substituting the above into \eqref{K-9} and \eqref{K-10}, we obtain
\[\|u_{x,\lambda}-u\|_{L^k(\Sigma_{x_0,\lambda}^{u})}=\|v_{x,\lambda}-v\|_{L^s(\Sigma_{x_0,\lambda}^v)}=0.
\]
 Thus we conclude that
 \begin{eqnarray*}
u_{x_0, {\lambda}}(\xi)&\le&u(\xi)\quad
\text{for \ all}~\xi\in\Sigma_{x_0, {\lambda}}^{n-1},\\
v_{x_0, {\lambda}}(\eta)&\le&v(\eta)\quad \text{for \ all}~\eta\in \Sigma_{x_0, {\lambda}}^{n}
\end{eqnarray*}for  $
\lambda\in[\bar{\lambda},\bar{\lambda}+\varepsilon) $,  which contradicts the definition of $\bar{\lambda}$.
\end{proof}

\medskip

The following three calculus key lemmas are needed for carrying out moving sphere procedure. Under  stronger assumptions ($f\in
C^1(\mathbb{R}^n_+)$), these lemmas were early proved by Li and Zhu \cite{LZ1995}, and  Li and Zhang \cite{LZ2003}. The first two lemmas, due to Li and Nirenberg,  are adopted from Li \cite{Li2004}.
\begin{lm}\label{Li-1} (Lemma~$5.7$~ in~\cite{Li2004}) For $n\ge 1$ and $\mu \in \mathbb{R}$,
if $f$ is a function defined on $\mathbb{R}^n$ and valued in $(-\infty,+\infty)$ satisfying
\[
\big(\frac\lambda{|y-x|}\big)^\mu f\big(x+\frac{\lambda^2(y-x)}{|y-x|^2}\big)\le f(y),\quad\forall\lambda>0, ~
|y-x|\ge\lambda,x,y\in\mathbb{R}^n,
\]
then $f(x)=$ constant.
\end{lm}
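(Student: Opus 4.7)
The plan is to show that $f(x) = f(y)$ for arbitrary distinct $x, y \in \mathbb{R}^n$. After a translation and rotation of coordinates, I may assume $x = 0$ and $y = e_1$. The key geometric idea is to apply the hypothesis to spheres whose Kelvin inversion interchanges $x$ and $y$; since no regularity of $f$ is assumed, all limiting behavior must come from the explicit scalar prefactor in the hypothesis.

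For a parameter $\tau > 1$, I would set the center $X = \tau e_1$ and radius $\lambda = \sqrt{\tau(\tau-1)}$. A direct check shows $|X-x|\,|X-y| = \tau(\tau-1) = \lambda^2$ with $x, y$ on the same ray from $X$, so $x^{X,\lambda} = y$; moreover $|x - X| = \tau \ge \lambda$, so the hypothesis applies with $X$ in the role of the center and $x$ in the role of the outer point. It yields
\[
\left(\frac{\lambda}{|x-X|}\right)^{\mu} f(y) \;=\; \left(1 - \frac{1}{\tau}\right)^{\mu/2} f(y) \;\le\; f(x).
\]
Letting $\tau \to \infty$ the scalar coefficient tends to $1$, and since $f(y)$ is a fixed real number the inequality passes to the limit: $f(y) \le f(x)$.

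For the reverse inequality I would run the same argument with the center placed on the opposite side of the segment $[x,y]$: take $X' = -s\, e_1$ for $s > 0$ and $\lambda' = \sqrt{s(s+1)}$, so that $y^{X',\lambda'} = x$ and $|y - X'| = s+1 \ge \lambda'$. The hypothesis then gives $(s/(s+1))^{\mu/2} f(x) \le f(y)$, and sending $s \to \infty$ yields $f(x) \le f(y)$. Combining the two bounds gives $f(x) = f(y)$; since $x, y$ were arbitrary, $f$ is constant.

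The main point to get right is the choice of sphere: it must be placed so that the outer point lies on or outside the sphere (to validate the hypothesis $|y-x|\ge \lambda$) while its Kelvin image is the prescribed target point. This forces the center to sit on the line through $x$ and $y$, strictly outside the closed segment, and the argument works in both directions by swapping which endpoint is taken to be the outer one. Because $f$ is only assumed to be real-valued, no limit of $f$ itself can be invoked, but none is needed: the limit is taken solely of the explicit scalar factor $(1 - 1/\tau)^{\mu/2}$, which converges to $1$ unconditionally in $\mu$.
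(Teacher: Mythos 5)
Your proof is correct, and it is essentially the standard argument for this result: the paper itself states the lemma without proof, citing Lemma 5.7 of Li \cite{Li2004}, whose proof uses exactly this device of inverting about spheres centered on the line through the two points, beyond each endpoint in turn, and letting the center run to infinity so that the scalar factor $(\lambda/|y-x|)^{\mu}$ tends to $1$. All the details you check (the product of distances equaling $\lambda^2$, the condition $|y-x|\ge\lambda$, and the fact that only the explicit scalar prefactor is passed to the limit since $f$ has no assumed regularity) are the right ones.
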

\begin{lm}\label{Li-2} (Lemma~$5.8$~ in~\cite{Li2004}) Let $n\ge 1$ and $\mu \in \mathbb{R}$,
 and $f\in C^0(\mathbb{R}^n).$  Suppose that for every $x\in\mathbb{R}^n$, there exists $\lambda>0$ such that
\[
\big(\frac\lambda{|y-x|}\big)^\mu f\big(x+\frac{\lambda^2(y-x)}{|y-x|^2}\big)= f(y), \ \ \quad ~ \forall
y\in\mathbb{R}^n\setminus\{x\}.
\] Then there are $a\ge0,d>0$ and $\bar{x}\in\mathbb{R}^n$, such that
\[
f(x)\equiv\pm a\big(\frac1 {d+|x-\bar{x}|^2}\big)^\frac{\mu}2.
\]
\end{lm}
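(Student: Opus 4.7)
The plan is to normalize to $f>0$, substitute $g=f^{-2/\mu}$ to linearize the multiplicative functional equation, and identify $g$ as the quadratic polynomial $d+|y-\bar x|^{2}$ via an asymptotic analysis followed by a rigidity argument. If $f\equiv 0$ take $a=0$; if $\mu=0$ the hypothesis forces $f$ constant; otherwise zeros of $f$ propagate under the Kelvin reflection at every center, so by continuity either $f\equiv 0$ or $f$ has no zeros, and after a sign flip absorbed into the $\pm$ I may assume $f>0$. Setting $g=f^{-2/\mu}\in C^{0}(\mathbb{R}^{n})$, the hypothesis rewrites as
\[
 g\bigl(\psi_{x,\lambda(x)}(y)\bigr)\,|y-x|^{2}=\lambda(x)^{2}\,g(y),\qquad y\ne x,
\]
where $\psi_{x,\lambda}(y)=x+\lambda^{2}(y-x)/|y-x|^{2}$.

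Sending $y\to\infty$ with $x$ fixed yields $\psi_{x,\lambda(x)}(y)\to x$, so by continuity $g(y)/|y-x|^{2}\to g(x)/\lambda(x)^{2}$; the left-hand limit is manifestly independent of $x$, so $g(x)/\lambda(x)^{2}$ equals a positive constant $c$, and after rescaling $g\mapsto g/c$ I may take $\lambda(x)^{2}=g(x)$ and $g(y)=|y|^{2}+o(|y|^{2})$ at infinity. Writing $g(y)=|y|^{2}+h(y)$ and using the Kelvin identity $|y-x|\,|\psi_{x,\lambda(x)}(y)-x|=g(x)$, the $g$-equation collapses after cancellation to
\[
 \frac{h(\psi_{x,\lambda(x)}(y))-h(x)}{|\psi_{x,\lambda(x)}(y)-x|}=\frac{h(y)-h(x)}{|y-x|},
\]
i.e.\ the radial difference quotient of $h$ from any base point $x$ is invariant under the Kelvin pairing of that ray. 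I would then combine this symmetry at several base points, together with the antisymmetry $A_{y}(-\omega)=-A_{x}(\omega)$ obtained by swapping $x\leftrightarrow y$, to force $h$ to be affine, $h(y)=D+v\cdot y$. Completing the square yields $g(y)=|y-\bar x|^{2}+d$ with $\bar x=-v/2$ and $d>0$ (strict positivity since $g>0$ attains its minimum $d$ at $\bar x$), so $f(y)=a(d+|y-\bar x|^{2})^{-\mu/2}$ after reabsorbing the rescaling constant as $a$.

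The main obstacle is the rigidity step: deducing the affine form of $h$ from its radial-slope symmetry in the merely-continuous setting, where one cannot just differentiate. I would either bootstrap smoothness of $h$ first (its values on any bounded set can be read off from values on the Kelvin image of that set, and iterating this should transfer regularity from the clean asymptotic expansion at infinity down to every compact piece, after which one differentiates the radial identity to read off an affine function), or work directly at the continuous level via the three-center compatibility relation obtained from composing two Kelvin reflections $\psi_{x_{1},\lambda(x_{1})}\circ\psi_{x_{2},\lambda(x_{2})}$, which should over-determine $h$ and force linearity. Everything else in the argument reduces to elementary algebra and the asymptotic expansion at infinity.
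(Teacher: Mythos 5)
Your opening reductions are correct and I have checked the algebra: the dichotomy ``$f\equiv 0$ or $f$ never vanishes,'' the normalization $\lambda(x)^2=g(x)$ for $g=f^{-2/\mu}$ obtained by sending $y\to\infty$, and the identity
\[
\frac{h(y^{x})-h(x)}{|y^{x}-x|}=\frac{h(y)-h(x)}{|y-x|},\qquad y^{x}=x+\frac{g(x)(y-x)}{|y-x|^{2}},\quad h(y)=g(y)-|y|^{2},
\]
all hold. (For the record, the paper does not prove this lemma at all: it is imported verbatim from Appendix B of \cite{Li2004}, where it is due to Li and Nirenberg, so there is no in-paper proof to compare against.) But your argument stops exactly where the lemma becomes nontrivial. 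For $f\in C^{1}$ --- the setting of the original version in \cite{LZ1995} --- one differentiates the radial identity at the fixed point $r=\sqrt{g(x)}$ of the involution $r\mapsto g(x)/r$ and obtains a first-order relation forcing $g$ to be a quadratic polynomial. The entire point of the $C^{0}$ version is that this differentiation is unavailable, and the rigidity step ``radial difference-quotient symmetry for a continuous $h$ implies $h$ affine'' is the theorem, not a routine finish.

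Neither of your two fallback strategies closes this gap as described. The smoothness bootstrap has no starting point: the Kelvin relation transfers whatever regularity $g$ has on a region to the inverted image of that region, but $g$ is only known to be continuous everywhere, and $h=o(|y|^{2})$ at infinity is a growth statement, not a regularity statement, so there is no seed region from which smoothness can propagate. The limit $r\to\infty$ in your identity only equates the upper and lower one-sided Dini derivatives of $h$ at $x$ in direction $\omega$ with $\limsup/\liminf_{r\to\infty}h(x+r\omega)/r$; for a merely continuous $h$ these need not exist, be finite, or be independent of the base point (independence would require controlling $h(x+r\omega)-h(x'+r\omega)$ uniformly as $r\to\infty$, which continuity does not give). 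The antisymmetry under $x\leftrightarrow y$ that you invoke is satisfied by every function and carries no information. Your ``three-center compatibility'' idea is the right direction in spirit --- one must exploit compositions of the inversions --- but that is precisely the delicate argument of Li--Nirenberg, and ``should over-determine $h$'' is an assertion, not a proof. An alternative genuine route at this regularity level is the inversion-positivity argument of Frank and Lieb \cite{FL2010}, which the authors themselves point to in Remark \ref{Re-R}. As it stands, the proposal sets up the problem correctly and then leaves its essential step unproved.
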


\begin{lm}\label{LZ-2} For $n\ge 1$ and $\mu \in \mathbb{R}$,
if $f$ is a function defined on $\mathbb{R}^n_+$ and valued in $(-\infty,+\infty)$ satisfying
\[
\big(\frac\lambda{|y-x|}\big)^\mu f\big(x+\frac{\lambda^2(y-x)}{|y-x|^2}\big)\le f(y),\quad\forall\lambda>0, ~
|y-x|\ge\lambda,y\in\mathbb{R}^n_+,x\in\partial\mathbb{R}^n_+,
\]
then
\[f(z)=f(z',t)=f(0,t),  ~~~~~~~~~~~~~~~~~~~~~\forall\,z=(z',t)\in\mathbb{R}^n_+.
\]
\end{lm}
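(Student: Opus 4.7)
The plan is to convert the Kelvin-inversion hypothesis into a translation-invariant comparison of values of $f$ between two arbitrary horizontal slices $\mathbb{R}^{n-1}\times\{t\}$ and $\mathbb{R}^{n-1}\times\{t_0\}$, and then use continuity in the height variable to force $f$ to be constant along each slice.

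Unpacking the Kelvin transform when the center lies on the boundary: take $y_0=(z_0',t_0)\in\mathbb{R}^n_+$ and $x=(x',0)\in\partial\mathbb{R}^n_+$, and set $h:=\lambda^2/|y_0-x|^2$. Then $y_0^{x,\lambda}=(1-h)x+h y_0=((1-h)x'+hz_0',\,ht_0)$ has height $ht_0$ and horizontal part $(1-h)x'+hz_0'$, the prefactor equals $(\lambda/|y_0-x|)^\mu=h^{\mu/2}$, and the admissibility $\lambda\le|y_0-x|$ is exactly $h\in(0,1]$. Given any target $(z',t)$ with $0<t<t_0$ and any $z_0'\in\mathbb{R}^{n-1}$, the assignment $h=t/t_0$ with $x'=(t_0z'-tz_0')/(t_0-t)$ solves $(1-h)x'+hz_0'=z'$; hence the hypothesis collapses to
\[
(t/t_0)^{\mu/2}\,f(z',t)\;\le\;f(z_0',t_0),\qquad\forall\,z',z_0'\in\mathbb{R}^{n-1},\ 0<t<t_0,\qquad(\ast)
\]
an inequality whose left side no longer depends on $z_0'$ and whose right side no longer depends on $z'$.

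Set $M(t):=\sup_{z'}f(z',t)$ and $m(t):=\inf_{z'}f(z',t)$. Taking the sup in $z'$ on the left and the inf in $z_0'$ on the right of $(\ast)$ gives $(t/t_0)^{\mu/2}M(t)\le m(t_0)\le f(0,t_0)$, so $M(t)\le(t_0/t)^{\mu/2}f(0,t_0)$; letting $t_0\downarrow t$ and invoking continuity of $f$ in $t$ yields $M(t)\le f(0,t)$, hence $M(t)=f(0,t)$ for every $t>0$. Applying $(\ast)$ with heights $(t',t)$ playing the role of $(t,t_0)$ and taking the sup on the left produces $(t'/t)^{\mu/2}M(t')\le f(z',t)$ for every $z'$ and every $t'<t$; substituting $M(t')=f(0,t')$ and letting $t'\uparrow t$ gives $f(0,t)\le f(z',t)$. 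Combined with $f(z',t)\le M(t)=f(0,t)$, this yields $f(z',t)=f(0,t)$ for every $(z',t)\in\mathbb{R}^n_+$, which is the claim.

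The main content is the reduction $(\ast)$: the geometric point is that every boundary-centered Kelvin inversion reparameterizes to the affine map $(z_0',t_0)\mapsto((1-h)x'+hz_0',\,ht_0)$, and for each fixed contraction ratio $h\in(0,1)$ the horizontal component sweeps all of $\mathbb{R}^{n-1}$ as $x'$ varies, which is precisely what decouples $z'$ from $z_0'$ in $(\ast)$. The only additional hypothesis I use is continuity of $f$ in the height variable, which is automatic in the paper's application (where $f$ plays the role of the smooth function $v$ from system \eqref{sys-1}). Without any regularity the derivation still shows that the two non-decreasing functions $t\mapsto t^{\mu/2}M(t)$ and $t\mapsto t^{\mu/2}m(t)$ coincide at each of their common continuity points, so $f$ is constant on almost every slice; continuity of $f$ then upgrades this to every $t>0$.
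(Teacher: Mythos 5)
Your proof is correct, and it reaches the conclusion by a cleaner, more quantitative route than the paper's. The paper fixes a target point $z=(z',z_n)$ and an arbitrary horizontal direction $y'$, then builds a sequence of inversions whose centers $x^i$ escape to infinity along $\partial\mathbb{R}^n_+$ (collinear with $z$ and $y^i=(y',y^i_n)$, $y^i_n\downarrow z_n$), with radii $\lambda_i=\sqrt{|z-x^i||y^i-x^i|}$ chosen so that $y^i$ inverts exactly to $z$; passing to the limit, the prefactor $(\lambda_i/|y^i-x^i|)^\mu\to 1$ and $f(y^i)\to f(y',z_n)$ give $f(z',z_n)\le f(y',z_n)$, and symmetry in $z'$, $y'$ finishes. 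Your argument packages the same geometric mechanism --- a boundary-centered inversion is the affine contraction $y\mapsto(1-h)x+hy$ with $h=\lambda^2/|y-x|^2\in(0,1]$, so it maps the slice at height $t_0$ onto the slice at height $ht_0$ and sweeps all horizontal positions as $x'$ varies --- into the single two-slice inequality $(\ast)$, and then extracts the conclusion by a sup/inf squeeze with $t_0\downarrow t$ and $t'\uparrow t$. What this buys is a global, explicitly decoupled inequality rather than a per-pair limiting construction, plus the bonus observation about monotonicity of $t\mapsto t^{\mu/2}M(t)$ and $t^{\mu/2}m(t)$. Both proofs use continuity of $f$ in the vertical variable even though the lemma as stated assumes none: the paper's limit $f(y^i)\to f(y',z_n)$ needs it just as your limits $t_0\downarrow t$, $t'\uparrow t$ do, and it is harmless in the application since there $f$ is the smooth function $v$. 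So you have not introduced any gap beyond what the paper's own argument already relies on.
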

\begin{proof} The proof of this lemma is similar to that of Lemma 7.1 in \cite{Li2004}.
For any $z=(z',z_n)\in \mathbb{R}^n_+$, choose $y^i=(y',y^i_n)$ with $y'\ne z'$,  $y^i_n>z_n$ and $y^i_n \to z_n$ as $i\to \infty$.
Choose $b^i>1,$ so that
\[x^i:=(x'^i,0)=x^i(b^i)=y^i+b^i(z-y^i)\in\partial\mathbb{R}^n_+.\]
Also define
\[\lambda_i:=\lambda_i(b^i)=\sqrt{|z-x^i||y^i-x^i|}.
\]
Then,
\[z=x^i+\frac{\lambda_i^2(y^i-x^i)}{|y^i-x^i|^2},
\] and by the assumption of Lemma \ref{LZ-2},
\[
\big(\frac{\lambda_i}{|y^i-x^i|}\big)^\mu f(z)\le f(y^i).
\]
Since
\begin{eqnarray*}
\lim_{i\to\infty}\frac{\lambda_i}{|y^i-x^i|}=\lim_{i\to\infty}\sqrt{\frac{|z-x^i|}{|y^i-x^i|}}=1, \ \ \mbox{and} \ \
\lim_{i\to\infty}y^i=(y', z_n),
\end{eqnarray*} we obtain $f(z',z_n)\le f(y', z_n)$. Since $y'$ and $z'$ are arbitrary, we have the lemma.

\end{proof}

\medskip

\textbf{Proof of Theorem \ref{sys-critical}.}

{\bf Case 1.} If there exists some $x_0\in \partial\mathbb{R}^{n}_+$ such that $\bar{\lambda}(x_0)<\infty,$ then
$\bar{\lambda}(x)<\infty$ for all $x\in\partial\mathbb{R}^{n}_+$.

For any $x \in\partial\mathbb{R}^{n}_+,$ from the definition of $\bar{\lambda}(x)$, we know $\forall \, \lambda\in (0, \bar{\lambda}(x)),$
 \[u_{x,\lambda}(\xi)\le u(\xi),\quad\quad \forall \xi\in\Sigma_{x, {\lambda}}^{n-1}.
 \]
It implies
\begin{equation}\label{K-11}
a:=\liminf_{|\xi|\to\infty}\big(|\xi|^{n-\alpha}u(\xi)\big)
\ge\liminf_{|\xi|\to\infty}\big(|\xi|^{n-\alpha}u_{x,\lambda}(\xi)\big)=\lambda^{n-\alpha}u(x),~~~~~~~~~~~\forall\,
\lambda\in(0, \bar{\lambda}(x)).
\end{equation}
On the other hand, since $\bar{\lambda}(x_0)<\infty$, using Lemma \ref{K-lm-2} we have
\begin{equation}\label{K-12}
a=\liminf_{|\xi|\to\infty}\big(|\xi|^{n-\alpha}u(\xi)\big)
=\liminf_{|\xi|\to\infty}\big(|\xi|^{n-\alpha}u_{x_0,\bar{\lambda}}(\xi)\big)=\bar{\lambda}^{n-\alpha}u(x_0)<\infty.
\end{equation}
 Combining \eqref{K-11} with \eqref{K-12} we obtain $\bar{\lambda}(x)<\infty$ for all $x\in\partial\mathbb{R}^{n}_+$.
 Applying Lemma \ref{K-lm-2} again, we know
 \[u_{x,\bar{\lambda}}(\xi)=u(\xi),~~~~~~~~~~~ ~~~~~~~~~~~\forall\, x,\xi\in \partial\mathbb{R}^{n}_+.
 \]
 By Lemma \ref{Li-2}, we have: for all $x \in\partial\mathbb{R}^{n}_+$,
 \begin{equation*}
 u(\xi)=c_1\big(\frac{1}{|\xi-\xi_0|^2+d^2}\big)^\frac{n-\alpha}2
\end{equation*} for some $c_1,\, d>0$ and $ \xi_0\in\partial\mathbb{R}^{n}_+.$

 Bringing the above into the second equation in system \eqref{sys-1}, we can show,  for $\xi\in\partial\mathbb{R}^{n}_+$, that
 \begin{equation*}
 v(\xi,0)=c_2\big(\frac{1}{|\xi-\xi_0|^2+d^2}\big)^\frac{n-\alpha}2
\end{equation*} for some $c_2,\, d>0$ and $ \xi_0\in\partial\mathbb{R}^{n}_+.$ More computational details can be found in the proof of Lemma 6.1 in Li \cite{Li2004}.

{\bf Case 2.}  $\bar{\lambda}(x)=\infty $ for all $x \in\partial\mathbb{R}^{n}_+$.

 Then for any given $x\in
\partial\mathbb{R}^{n}_+,$
\[u_{x,\lambda}(\xi)\le u(\xi),~~~~~~~~~~~~~~~~~~~~~~\forall\, \xi\in\Sigma_{x,
{\lambda}}^{n-1}.
\]
Using Lemma \ref{Li-1}, we conclude that $u=C_0$  is a constant.

On the other hand, for any $x\in
\partial\mathbb{R}^{n}_+,$
\[v_{x,\lambda}(\eta)\le v(\eta),~~~~~~~~~~~~~~~~~~~~~~\forall\, \eta\in\Sigma_{x,
{\lambda}}^{n}.
\]
From Lemma \ref{LZ-2},   we conclude that $v$ only depends on $t$. Thus, we have
\begin{equation*}
v(\eta',t)=v(0,t)=\int_{\partial\mathbb{R}^{n}_+}\frac{C_0^\gamma}{(|y|^2+t^2)^{\frac{n-\alpha}{2}}} dy,
\end{equation*} However,
\begin{eqnarray*}
v(0,t)&=&\int_{\partial\mathbb{R}^{n}_+}\frac{C_0^\gamma}{(|y|^2+t^2)^{\frac{n-\alpha}{2}}} dy\\
&=&C t^{\alpha-1}\int_{0}^{\infty}\frac{\rho^{n-2}}{(\rho^2+1)^{\frac{n-\alpha}{2}}}d\rho.
\end{eqnarray*}
Since $\alpha>1$, we conclude that $v(0,t)$  is infinite for $t\ne0$.  Contradiction.\hfill$\Box$

\begin{rem}\label{Re-R}
It is interesting to point out that Theorem \ref{sys-critical} can also be directly proved from the fact that $\bar \lambda (x) <\infty $ for all $x\in\partial\mathbb{R}^{n}_+$ via Theorem 1.4 in \cite{FL2010} without using the $C^0$ regularity of solutions.
\end{rem}

\section{\textbf{Regularity of solutions to integral equation} \label{Section 4}}
In this section, we address the regularity properties of solutions to integral equation \eqref{equ-1}.

\begin{thm} \label{Regularity-1}
Let $ 1<\alpha<n$ and $1<p<(n-1)/(\alpha-1)$.  Suppose that $f\in L^p_{loc}(\partial\mathbb{R}^{n}_+)$ is nonnegative solution to  \eqref{equ-1} with $\frac1q=\frac{n-1}n\big(\frac1p-\frac{\alpha-1}{n-1})$. Then $f\in
C^\infty(\partial\mathbb{R}^{n}_+).$
\end{thm}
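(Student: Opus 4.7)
The plan is to convert the single equation \eqref{equ-1} to the coupled system used in Section~3 by setting $u(y)=f^{p-1}(y)$ and $v(x)=E_\alpha f(x)$, with $\theta=1/(p-1)$ and $\kappa=q-1$, so that
\[
u(y)=\int_{\mathbb{R}^n_+}\frac{v^\kappa(x)}{|x-y|^{n-\alpha}}\,dx,\qquad v(x)=\int_{\partial\mathbb{R}^n_+}\frac{u^\theta(y)}{|x-y|^{n-\alpha}}\,dy,
\]
with $u\in L^{\theta+1}_{loc}(\partial\mathbb{R}^n_+)$. It suffices to prove $u\in C^\infty(\partial\mathbb{R}^n_+)$ (and along the way $v\in C^\infty(\overline{\mathbb{R}^n_+})$), since $f=u^{1/(p-1)}$ and $u$ is strictly positive on compacta by the equation itself.

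The main step is a Brezis--Kato style bootstrap. Rewrite the system as a linear problem with variable coefficients,
\[
u(y)=\int_{\mathbb{R}^n_+}\frac{a(x)\,v(x)}{|x-y|^{n-\alpha}}\,dx,\qquad v(x)=\int_{\partial\mathbb{R}^n_+}\frac{b(y)\,u(y)}{|x-y|^{n-\alpha}}\,dy,
\]
where $a:=v^{\kappa-1}$ and $b:=u^{\theta-1}$ lie in appropriate local Lebesgue spaces inherited from $u\in L^{\theta+1}_{loc}$ and the consequent local integrability of $v$. For a threshold $M>1$, split $a=a_M+a^M$ with $a_M:=a\chi_{\{a\le M\}}$ bounded and $a^M:=a\chi_{\{a>M\}}$ of arbitrarily small local norm (and similarly for $b$). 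Feeding these decompositions into Theorem~\ref{HLSB} and Corollary~\ref{HLSD-theo-dual}, one controls the bounded pieces by H\"older's inequality and the tail pieces by HLS; the latter can be absorbed back into the left hand side once $M$ is large enough on a fixed compact set. This produces a strictly higher local integrability exponent for both $u$ and $v$, and iterating finitely many times yields $u\in L^r_{loc}(\partial\mathbb{R}^n_+)$ and $v\in L^r_{loc}(\mathbb{R}^n_+)$ for every finite $r$.

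Once the integrability is arbitrarily high, standard Riesz potential regularity applies: for $r$ large enough, convolution with $|x-y|^{-(n-\alpha)}$ maps $L^r_{loc}$ into a local H\"older space, so $u$ and $v$ are locally H\"older. Because $v\not\equiv 0$ on $\mathbb{R}^n_+$, the equation forces $u>0$ on $\partial\mathbb{R}^n_+$, so $u^\theta$ and $v^\kappa$ are as regular as $u$ and $v$ themselves. A standard bootstrap then promotes H\"older regularity to $C^\infty$: differentiating the representation formulas under the integral sign is legitimate away from the singular diagonal, and the diagonal contribution is handled by a cutoff plus the usual fractional integral Schauder estimates, gaining one derivative at each step.

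The main obstacle will be the first, Brezis--Kato, step: the splitting and the exponent matching in Theorem~\ref{HLSB} and Corollary~\ref{HLSD-theo-dual} must be arranged so that each iteration produces a genuine gain, which requires a careful simultaneous tracking of the two scaling relations $1/q=\frac{n-1}{n}(1/p-\frac{\alpha-1}{n-1})$ and $1/q=\frac{n}{n-1}(1/t-\alpha/n)$ along the coupled system. The subcritical range $1<p<(n-1)/(\alpha-1)$ is precisely where such a strict improvement is available; at the critical exponent one still obtains integrability gain, but the absorption argument is the only route, so the proof is written so as to work uniformly in $p$ within the stated range.
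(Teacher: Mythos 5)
Your overall architecture coincides with the paper's: pass to the system $(u,v)$ with $u=f^{p-1}$, $v=E_\alpha f$, run a Brezis--Kato type local improvement of integrability using the two HLS inequalities of Theorems \ref{HLSD-theo}--\ref{HLSB} and Corollary \ref{HLSD-theo-dual}, and then bootstrap from $L^r_{loc}$ for all $r$ to H\"older and finally to $C^\infty$. The paper implements exactly this via two local estimates (Propositions \ref{Reg-prop-1} and \ref{Reg-prop-2}), with one difference of mechanism: the smallness needed for absorption is obtained not by truncating the coefficients in height, but by shrinking the radius $R$ of the ball on which the coefficient norms are taken (absolute continuity of the integral), after first splitting off the far-field tails $u_R,v_R$ and checking they are locally bounded. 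Your height-truncation $a=a_M+a^M$ is a legitimate alternative for producing smallness where it applies.

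There are, however, two concrete gaps in your Brezis--Kato step as written. First, the linearization $a:=v^{\kappa-1}$, $b:=u^{\theta-1}$ involves a \emph{negative} power in part of the admissible range: with $\theta=1/(p-1)$ one only has $\theta>\frac{\alpha-1}{n-\alpha}$, which can be $<1$, and dually $\kappa=q-1$ can be $<1$ when $\alpha<n/2$ and $q$ is near $\frac{n}{n-\alpha}$. Then $u^{\theta-1}$ (resp.\ $v^{\kappa-1}$) is not controlled by the known integrability of $u$ (resp.\ $v$), the exponent $(\theta+1)/(\theta-1)$ is meaningless, and the splitting into a bounded part plus a small-norm part fails unless you first prove a local positive lower bound for $u$ and $v$ --- which is part of what the regularity argument is supposed to deliver. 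The paper circumvents this with an asymmetric factorization $u^\theta=u^{\theta-1/r}\cdot u^{1/r}$ and $v^{\kappa}=v^{\kappa-r}\cdot v^{r}$, choosing the auxiliary exponent $r$ differently in two cases ($\theta<\frac{n+\alpha-2}{n-\alpha}$ versus $\theta\ge\frac{n+\alpha-2}{n-\alpha}$) precisely so that all coefficient exponents are nonnegative and the scaling relations \eqref{cond-1}, \eqref{Cond-2} are met; your proposal needs either this device or a prior positivity/lower-bound step. Second, the phrase ``absorbed back into the left hand side'' hides the standard qualitative issue: one may only subtract $\tfrac12\|v\|_{L^q}$ from both sides after knowing a priori that this quantity is finite, which is exactly what is not yet known. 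The paper supplies this via a contraction-mapping approximation (replacing the inhomogeneous term $h$ by $h_j=\min\{h,j\}$, solving for $v_j\in L^q$, and passing to the limit). Without one of these justifications the iteration does not start, so the first step of your plan is the one that still needs the real work.
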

%

Theorem \ref{Regularity-1} is equivalent to the following.
\begin{thm}\label{Regularity-2}
Assume $1<\alpha<n, $ $\frac{\alpha -1}{n-\alpha}<\theta<\infty$ and $0<\kappa<\infty$ given by
\begin{equation}\label{k-t}
\frac1{\kappa+1}=\frac{n-1}{n}\big(\frac{n-\alpha}{n-1}-\frac1{\theta+1}\big).
 \end{equation} If (u,v) is a pair of positive solutions of \eqref{sys-1} with $u\in
L^{\theta+1}_{loc}(\partial\mathbb{R}^{n}_+),$ then $u\in C^\infty(\partial\mathbb{R}^{n}_+)$ and $v\in
C^\infty(\overline{\mathbb{R}^{n}_+}).$
\end{thm}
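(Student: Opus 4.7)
The plan is a three-stage bootstrap in the spirit of Brezis--Kato \cite{BK1979}, as indicated in the introduction: first upgrade the local integrability of $u$ and $v$ to local $L^\infty$ bounds, then deduce continuity, and finally promote continuity to $C^\infty$.

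For the first stage, note that the assumption $\theta>(\alpha-1)/(n-\alpha)$ is equivalent to $(\theta+1)/\theta<(n-1)/(\alpha-1)$, so $u^\theta\in L^{(\theta+1)/\theta}_{loc}(\partial\mathbb{R}^n_+)$ falls inside the range of the HLS inequality \eqref{HLSD-2}; the exponent relation \eqref{k-t} is exactly the matching critical exponent between \eqref{HLSD-2} applied to $u^\theta$ and \eqref{HLSD-3} applied to $v^\kappa$. Since a direct application of HLS at the critical exponent only recovers the same integrability, I iterate past this threshold by the Brezis--Kato splitting: fix a half ball $B_R^+(x_0)$ and a truncation level $K>0$, and write
\[
v^\kappa=v^\kappa\mathbf{1}_{\{v\le K\}}+v^\kappa\mathbf{1}_{\{v>K\}}.
\]
The first piece contributes an $L^\infty$ term to $u=R_\alpha(v^\kappa)$, while the second has small $L^{(\kappa+1)/\kappa}$ norm once $K$ is large enough (using the currently known local integrability of $v$); combined with \eqref{HLSD-3} and H\"older, this yields $u\in L^{s_1}_{loc}$ on a slightly smaller ball for a strictly larger exponent $s_1$. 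The symmetric splitting of $u^\theta$ in the equation for $v=E_\alpha(u^\theta)$ promotes $v$ the same way. A finite number of alternating iterations drives both exponents high enough that a direct H\"older estimate against the Riesz kernel $|x-y|^{\alpha-n}$ (whose local $L^{p'}$ norm is finite for $p'<n/(n-\alpha)$) forces $u,v\in L^\infty_{loc}$.

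Once local boundedness is in hand, continuity of $u$ at any $y_0\in\partial\mathbb{R}^n_+$ follows from dominated convergence applied to \eqref{sys-1}: split the integral over $B_\delta^+(y_0)$ and its complement; the near part is small uniformly in $y$ near $y_0$ because $|x-y|^{\alpha-n}$ is integrable on $B_\delta^+(y_0)$ in $n$-dimensional measure (any $\alpha>0$) against the bounded density $v^\kappa$, and the far part is continuous in $y$ by standard dominated convergence. The analogous argument for $v$, where the singular kernel now sits against the $(n-1)$-dimensional boundary measure, requires $\alpha>1$ for local integrability---which is precisely the hypothesis---and gives $v\in C^0_{loc}(\overline{\mathbb{R}^n_+})$.

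For the last stage, in the open half space the kernel $|x-y|^{\alpha-n}$ with $y\in\partial\mathbb{R}^n_+$ satisfies $|x-y|\ge x_n$, so it is smooth in $x$ on any compact subset of $\mathbb{R}^n_+$ and one may differentiate under the integral sign as often as desired, giving $v\in C^\infty(\mathbb{R}^n_+)$. For boundary regularity of $u$ and of $v$ up to $\partial\mathbb{R}^n_+$, I would proceed by tangential difference quotients together with the $L^\infty$ bound to obtain $u\in C^{0,\gamma}_{loc}(\partial\mathbb{R}^n_+)$ for some $\gamma\in(0,1)$, then feed this H\"older regularity back into the equations and iterate: each pass differentiates the kernel tangentially against an increasingly regular density and, via Schauder-type estimates for Riesz kernels, lifts the control to $C^{k,\gamma}$ for every $k$. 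The main obstacle will be Stage 1: the bootstrap must be arranged so that every pass through \eqref{HLSD-2} and \eqref{HLSD-3} produces a quantifiable strict gain in the exponent, and the splitting level $K$ has to be chosen uniformly on each shrinking ball; the boundary step in Stage 3 is also delicate because the kernel loses differentiability at $\partial\mathbb{R}^n_+$, but it reduces to a standard tangential iteration once the $L^\infty$ and $C^0$ bounds of Stages 1--2 are available.
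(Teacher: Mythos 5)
Your overall strategy --- a Brezis--Kato style local bootstrap through the two HLS inequalities, then $L^\infty_{loc}$, then continuity, then smoothness by differentiating the kernel --- is the same as the paper's. The mechanism for smallness differs: you truncate at a level $K$ in the function value, whereas the paper factors $v^\kappa=v^{\kappa-r}\cdot v^r$ (resp. $u^\theta=u^{\theta-1/r}\cdot u^{1/r}$), treats $V=v^{\kappa-r}$, $U=u^{\theta-1/r}$ as potentials, and gets smallness of $\|U\|_{L^b}\|V\|^{1/r}_{L^a}$ by shrinking the radius (Propositions \ref{Reg-prop-1} and \ref{Reg-prop-2}). These are interchangeable variants, and with the paper's choice $r=\kappa-\frac{\alpha}{n}(\kappa+1)+\varepsilon$ a \emph{single} pass already yields $v\in L^q_{loc}$ for every finite $q$, so no alternating iteration is needed.

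There are, however, three concrete gaps. First, you never control the far-field parts of the integrals. Writing $u(y)=\int_{B_R^+}v^\kappa|x-y|^{\alpha-n}dx+u_R(y)$ with $u_R(y)=\int_{\Sigma_{0,R}^n}v^\kappa|x-y|^{\alpha-n}dx$, the tail is a global integral of a function only known to be locally integrable; its finiteness is not automatic, and indeed you also invoke ``the currently known local integrability of $v$'' when the hypothesis only gives $u\in L^{\theta+1}_{loc}$. The paper's Step 1 handles both points: HLS \eqref{HLSD-2} applied to $u^\theta$ on $B_R^{n-1}$ gives $v\in L^{\kappa+1}_{loc}$, and the a.e. finiteness of $u$ and $v$ forces $\int_{\Sigma_{0,R}^{n-1}}u^\theta|y|^{\alpha-n}dy<\infty$ and $\int_{\Sigma_{0,R}^{n}}v^\kappa|x|^{\alpha-n}dx<\infty$, whence $u_R,v_R\in L^\infty_{loc}$. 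Second, the absorption step ($\|v\|_{L^q}\le\frac12\|v\|_{L^q}+\dots$) is vacuous unless one already knows qualitatively that $v\in L^q$; you flag this but do not resolve it, while the paper does so by replacing $h$ with $h_j=\min(h,j)$ and running a contraction-mapping argument in $L^q$ before passing to the limit. Third, the exponent bookkeeping that makes the iteration close is not a formality: one must choose the splitting power $r$ in the admissible window (e.g. $1<\kappa-\frac{\alpha}{n}(\kappa+1)\le r\le\kappa$, $r>1/\theta$), and which window is nonempty depends on whether $\theta<\frac{n+\alpha-2}{n-\alpha}$ or $\theta\ge\frac{n+\alpha-2}{n-\alpha}$; this dichotomy determines whether the iteration runs through Proposition \ref{Reg-prop-1} or its dual \ref{Reg-prop-2}, and your symmetric alternating scheme would have to confront it as well. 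The final stage (smoothness) is fine: as in the paper, once $u,v\in L^\infty_{loc}$ the tails are $C^\infty$ and the near-field integral gains a H\"older-continuous derivative because $\alpha>1$, after which the bootstrap is routine.
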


To prove this theorem, we first establish two local regularity results, which are similar to Lemma A.1 in \cite{BK1979},  Theorem
$1.3$ in \cite{Li2004} and Proposition $5.2$ and $5.3$ in \cite{HWY2008}, and quite similar in spirit to  Brezis and Kato's Lemma
\cite{BK1979} and the regularity result in \cite{BR1992}.

We use the notations from previous section. And later in this section, for $x=0$, we write
$B_R=B_R(0),B_R^{n-1}=B_R^{n-1}(0),B_R^+=B_R^+(0),\Sigma_{R}^n=\Sigma_{0,R}^n$ and $ \Sigma_{R}^{n-1}=\Sigma_{0,R}^{n-1}$.

 \begin{prop}\label{Reg-prop-1}
 Assume $1<\alpha<n,\,1<a,b\le\infty,1\le r<\infty$ and $\frac{n}{n-\alpha}<p<q<\infty $ satisfy
 \begin{equation}\label{cond-1}\frac\alpha n<\frac rq+\frac1a<\frac rp+\frac1a<1\quad\text{and}\quad\frac{n}{ar}+\frac{n-1}b=\frac\alpha r+(\alpha-1).
 \end{equation}
 Suppose that  $v,h\in L^{p}(B^+_R), V\in L^{a}(B^+_R), \mbox{and} \, \ ~ U\in L^{b}(B^{n-1}_R)$ are
 all nonnegative functions with $h|_{B^+_{R/2}}\in L^{q}(B^+_{R/2})$,
and
 \[v(x)\le\int_{B^{n-1}_R}\frac{U(y)}{|x-y|^{n-\alpha}}
 \big[\int_{B^{+}_R}\frac{V(z)v^r(z)}{|z-y|^{n-\alpha}}dz\big]^\frac1rdy+h(x),\quad\quad \forall x\in B^{+}_R.
 \]
  There is a $\varepsilon=\varepsilon(n,\alpha,p,q,r,a,b)>0$, and $C(n,\alpha,p,q,a,b,r,\varepsilon)>0$ such
 that if
 \[\|U\|_{L^{b}(B^{n-1}_R)}\|V\|^\frac1r_{L^{a}(B^+_R)}\le \varepsilon(n,\alpha,p,q,r,a,b),
 \]
 then
 \[\|v\|_{L^{q}(B^+_{R/4})}\le C(n,\alpha,p,q,a,b,r,\varepsilon)\big(R^{\frac{n}{q}-\frac{n}{p}}\|v\|_{L^{p}(B^+_{R})}+\|h\|_{L^{q}(B^+_{R/2})}).
 \]
 \end{prop}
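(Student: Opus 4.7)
The plan is a Brezis--Kato style absorption argument built around the sharp HLS inequalities \eqref{HLSD-2} and \eqref{HLSD-3}. Denote
\[
Tv(x) := \int_{B_R^{n-1}} \frac{U(y)}{|x-y|^{n-\alpha}}\bigg(\int_{B_R^+}\frac{V(z)\,v(z)^r}{|z-y|^{n-\alpha}}\,dz\bigg)^{1/r} dy,
\]
so the hypothesis reads $v \le Tv + h$ on $B_R^+$. The first step is the master estimate
\[
\|Tv\|_{L^q(\mathbb{R}^n_+)} \le C_0 \|U\|_{L^b}\|V\|_{L^a}^{1/r}\|v\|_{L^q(B_R^+)},
\]
obtained by applying in sequence: (i) the $E_\alpha$-HLS \eqref{HLSD-2} to the outer integral with boundary exponent $p_1$; (ii) H\"older on $U \cdot w^{1/r}$ (where $w(y) := \int V v^r |z-y|^{\alpha-n}dz$) via $\tfrac{1}{p_1} = \tfrac{1}{b} + \tfrac{1}{p_2}$; (iii) the $R_\alpha$-HLS of Corollary~\ref{HLSD-theo-dual} to bound $\|w\|_{L^{p_2/r}}$ by $\|V v^r\|_{L^{t_1}}$; (iv) H\"older on $V \cdot v^r$ via $\tfrac{1}{t_1} = \tfrac{1}{a} + \tfrac{r}{q}$. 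A direct bookkeeping computation shows the four intermediate exponents close consistently precisely when $\tfrac{n}{ar} + \tfrac{n-1}{b} = \tfrac{\alpha}{r} + (\alpha-1)$, which is the equality part of \eqref{cond-1}; the strict inequalities in \eqref{cond-1} place $p_1 \in (1, \tfrac{n-1}{\alpha-1})$ and $t_1 \in (1, \tfrac{n}{\alpha})$, keeping both HLS applications legal.

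Fixing $\varepsilon := 1/(4 C_0)$, the smallness hypothesis turns $T$ into a $\tfrac14$-contraction on $L^q$, so if $v \in L^q$ were granted a priori, combining with $v \le Tv + h$ would immediately yield $\|v\|_{L^q} \le \tfrac43 \|h\|_{L^q}$. The main obstacle is establishing this a priori $L^q$-integrability, since the critical scaling \eqref{cond-1} forces the chain to map each $L^s$ into itself with no exponent gain, so iteration of the chain alone cannot lift $L^p$ to $L^q$. My plan is to introduce truncations $v_N := \min(v,N)$ and $w_N := (v-N)_+$, so $v = v_N + w_N$, $v_N \in L^\infty \cap L^q(B_R^+)$, and $\|w_N\|_{L^p(B_R^+)} \to 0$ as $N \to \infty$ by dominated convergence. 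Using $(a+b)^r \le 2^{r-1}(a^r + b^r)$ and $(a+b)^{1/r} \le a^{1/r} + b^{1/r}$ (valid since $r \ge 1$), the inequality splits as $v_N \le v \le C_1(T(v_N) + T(w_N)) + h$. The $T(v_N)$ piece falls to the contraction, giving $\|T(v_N)\|_q \le \tfrac14 \|v_N\|_q$; the $T(w_N)$ piece I would bound by restricting the inner integration to $\{v > N\}$ (whose measure is $\le N^{-p}\|v\|_{L^p}^p$ by Chebyshev) and combining H\"older with the dominated-convergence smallness of $\|V v^r \chi_{\{v > N\}}\|_{L^{t_1}}$, producing a bound depending only on $\|v\|_{L^p(B_R^+)}$, uniformly in $N$. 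After absorption and Fatou's lemma on the left as $N \to \infty$, the a priori $L^q$ bound for $v$ follows.

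Finally, to get the localization to $B_{R/4}^+$ and the $R^{n/q - n/p}$ scaling factor in the conclusion, split the outer $y$-integration in $Tv$ into a near part $y \in B_{R/2}^{n-1}$ and a far part $y \in B_R^{n-1} \setminus B_{R/2}^{n-1}$; for $x \in B_{R/4}^+$ the far kernel $|x-y|^{\alpha-n}$ is uniformly bounded by $(R/4)^{\alpha-n}$, and an elementary H\"older estimate on the far contribution yields the factor $R^{n/q - n/p}\|v\|_{L^p(B_R^+)}$ by scaling. The near contribution is handled by the absorption above, and $\|h\|_{L^q(B_{R/2}^+)}$ enters directly from the hypothesis $v \le Tv + h$. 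The delicate step throughout is the truncation in the second paragraph: extracting the $L^p \to L^q$ improvement from the interplay between the smallness of $\|U\|_b\|V\|_a^{1/r}$ and the shrinking measure of $\{v > N\}$, in a way that gives a uniform-in-$N$ estimate.
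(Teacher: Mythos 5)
Your first paragraph is sound and matches the paper's opening move: the composite operator $T$ is bounded on $L^q$ (and on $L^p$) with norm $C_0\|U\|_{L^b}\|V\|_{L^a}^{1/r}$, by exactly the chain HLS--H\"older--HLS--H\"older you describe, and the equality in \eqref{cond-1} is what closes the exponents. You also correctly diagnose the real difficulty: the scaling is critical, so $T$ maps each $L^s$ into itself with no gain, and the whole problem is to justify that $v$ is in $L^q$ at all. But your proposed resolution of that difficulty does not work. Bounding $\|T(w_N)\|_{L^q}$ "by a quantity depending only on $\|v\|_{L^p(B_R^+)}$" is impossible with the tools you invoke: to land in $L^q$ after the chain you must feed $\|w_N\|_{L^q}$ (equivalently $\|Vv^r\chi_{\{v>N\}}\|_{L^{t_1}}$ with $\tfrac1{t_1}=\tfrac1a+\tfrac rq$) into step (iv), which is precisely the unknown quantity; if instead you H\"older with $\tfrac1a+\tfrac rp$ you exit the chain in some $L^{q'}$ with $q'<q$, not $L^q$. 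The Chebyshev bound $|\{v>N\}|\le N^{-p}\|v\|_{L^p}^p$ cannot repair this, because small measure of the set only \emph{lowers} integrability exponents via H\"older; it never converts $L^p$ control into $L^q$ control. So the absorption $\|v_N\|_{L^q}\le \tfrac{C_1}4\|v_N\|_{L^q}+C_1\|T(w_N)\|_{L^q}+\|h\|_{L^q}$ leaves an uncontrolled term, and the argument is circular.

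The paper resolves this by a structurally different device (the Brezis--Kato / Hang--Wang--Yan trick): it truncates $h$, not $v$. One writes $v=T(v)+\zeta h$ with a measurable $0\le\zeta\le1$, sets $h_j=\min(h,j)\in L^\infty\subset L^q(B_1^+)$, and uses the contraction mapping theorem in $L^q$ to \emph{manufacture} a solution $v_j\in L^q(B_1^+)$ of $v_j=T(v_j)+\zeta h_j$; the a priori estimate (proved first under the assumption $v,h\in L^q$) applies to $v_j$ uniformly in $j$, and the contraction property of $T$ in $L^p$ forces $v_j\to v$ in $L^p$, so the bound passes to $v$ in the limit. Two further points for the a priori step itself: since $h$ is only in $L^q$ on $B_{R/2}^+$, the global absorption $\|v\|_{L^q}\le\tfrac43\|h\|_{L^q}$ you propose is unavailable; the paper instead runs the near/far decomposition at every pair of radii $0<\varrho<\delta\le\tfrac12$, obtaining $\|v\|_{L^q(B_\varrho^+)}\le\tfrac12\|v\|_{L^q(B_\delta^+)}+\tfrac{C}{(\delta-\varrho)^{n-\alpha}}\|v\|_{L^p(B_1^+)}+\|h\|_{L^q(B_{1/2}^+)}$, and then closes with the standard iteration lemma over nested balls. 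Your single near/far split at the fixed radius $R/4$ would leave a term $\tfrac14\|v\|_{L^q(B_{R/2}^+)}$ on the right that cannot be absorbed into $\|v\|_{L^q(B_{R/4}^+)}$ on the left. The factor $R^{n/q-n/p}$ then comes simply from the initial rescaling to $R=1$, not from a separate H\"older estimate on the far part.
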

\begin{proof} After rescaling, we may assume $R=1$.

We first consider the case that  $v, \, ~ h \in L^{q}(B^+_{1})$. Denote
\[u(y)=\int_{B^{+}_1}\frac{V(x)v^r(x)}{|x-y|^{n-\alpha}}dx\quad\text{for}\quad y\in B^{n-1}_1,
\]
and  define
\begin{equation*}
\psi(y)=\begin{cases}u(y)\quad &y\in B^{n-1}_1,\\
0\quad &y\not\in  B^{n-1}_1,
\end{cases}
\end{equation*}and
\begin{equation*}
\phi(x)=\begin{cases}v(x)\quad &x\in B^{+}_1,\\
0\quad &x\not\in  B^{+}_1.
\end{cases}
\end{equation*}
For any $s_1, s_2 \in (1, n/\alpha)$, we know from inequality \eqref{HLSD-3}, that
\begin{eqnarray*}
\|\psi\|_{L^{p_1}(\partial \mathbb{R}^n_+)}\le c(n,\alpha,s_1,r)\|V\phi^r\|_{L^{s_1}(\mathbb{R}^n_+)},\\
\|\psi\|_{L^{q_1}(\partial\mathbb{R}^n_+)}\le c(n,\alpha,s_2,r)\|V\phi^r\|_{L^{s_2}(\mathbb{R}^n_+)},
\end{eqnarray*} where $$\frac1{p_1}=\frac n{n-1}\big(\frac1{s_1}-\frac\alpha n\big),\quad
\frac1{q_1}=\frac n{n-1}\big(\frac1{s_2}-\frac\alpha n\big).$$
 In particular, if we choose $s_1, \, s_2$ so that $\frac1{s_1}=\frac rp+\frac1a, \frac1{s_2}=\frac rq+\frac1a$,
we have, via  H\"{o}lder inequality, that
\begin{eqnarray}
\|u\|_{L^{p_1}(B^{n-1}_1)}&\le& c(n,\alpha,p,a,r)\|V\|_{L^a(B^{+}_1)}\|v\|^r_{L^p(B^{+}_1)},\label{R-1}\\
\|u\|_{L^{q_1}(B^{n-1}_1)}&\le& c(n,\alpha,q,a,r)\|V\|_{L^a(B^{+}_1)}\|v\|^r_{L^q(B^{+}_1)},\label{R-2}
\end{eqnarray}
where $p_1$ and $q_1$ satisfy
\begin{eqnarray*}
\frac1{p_1}=\frac n{n-1}\big(\frac rp +\frac1a-\frac\alpha n\big),\quad
\frac1{q_1}=\frac n{n-1}\big(\frac rq+\frac1a-\frac\alpha n\big).
\end{eqnarray*}
The existence of $p_1, \, q_1$ is guaranteed by \eqref{cond-1}.
Let $0<\varrho<\delta\le\frac12.$ For $x\in B^{+}_\delta$, we have
\begin{eqnarray*}
v(x)&\le&\int_{B^{n-1}_{\frac{\delta+\varrho}2}}\frac{U(y)u^\frac1r(y)}{|x-y|^{n-\alpha}}dy
+\int_{B^{n-1}_1\backslash B^{n-1}_{\frac{\delta+\varrho}2}}\frac{U(y)u^\frac1r(y)}{|x-y|^{n-\alpha}}dy+h(x)\\
&=:& J_1(x)+J_2(x)+h(x).
\end{eqnarray*}
From \eqref{cond-1}, we know $\frac{1}q=\frac{n-1}n\big(\frac1b+\frac1{q_1r}-\frac{\alpha-1}{n-1}\big).$ Using inequality \eqref{HLSD-2} and H\"{o}lder
inequality, we have
\begin{eqnarray*}
 \|J_1\|_{L^q(B^{+}_{\varrho})}\le c(n,\alpha,q,r,b)\|U\|_{L^b(B^{n-1}_{1})}\|u\|^\frac1r_{L^{q_1}(B^{n-1}_{\frac{\delta+\varrho}2})}.
\end{eqnarray*}
Let $\frac1{m_1}=1-\frac1b-\frac1{p_1r}.$ Since $p>n/(n-\alpha)$, we know $m_1>1$. From H\"{o}lder inequality and \eqref{R-1}, we know
\begin{eqnarray*}
J_2(x)&\le& \frac{c(n,\alpha)}{(\delta-\varrho)^{n-\alpha}}\int_{B^{n-1}_1\backslash B^{n-1}_{\frac{\delta+\varrho}2}} {U(y)u^\frac1r(y)} dy\\
&\le& \frac{c(n,\alpha,p,r,b)}{(\delta-\varrho)^{n-\alpha}}|B_1^{n-1}|^{\frac1{m_1}}
\|U\|_{L^b(B^{n-1}_{1})}\|u\|^\frac1r_{L^{p_1}(B^{n-1}_{1})}\\
&\le& \frac{c(n,\alpha,p,r,a,b)}{(\delta-\varrho)^{n-\alpha}}|B^{n-1}_1|^{\frac1{m_1}}
\|U\|_{L^b(B^{n-1}_{1})}\|V\|^\frac1r_{L^a(B^{+}_1)}\|v\|_{L^p(B^{+}_1)}.
\end{eqnarray*}
Combining the above and using  Minkowski inequality we have
\begin{eqnarray}\label{R-3}
\|v\|_{L^q(B^{+}_{\varrho})}&\le& c(n,\alpha,q,r,b)\|U\|_{L^b(B^{n-1}_{1})}\|u\|^\frac1r_{L^{q_1}(B^{n-1}_{\frac{\delta+\varrho}2})}
+\frac{c(n,\alpha,p,q,r,a,b)}{(\delta-\varrho)^{n-\alpha}}|B^{n-1}_1|^{\frac1{m_1}}|B^+_\varrho|^{\frac1{q}} \nonumber\\
& & \times \|U\|_{L^b(B^{n-1}_{1})}\|V\|^\frac1r_{L^a(B^{+}_1)}\|v\|_{L^p(B^{+}_1)}+\|h\|_{L^q(B^{+}_{\frac12})}.
\nonumber\\~~~~~~~~~~~~~~~~~
\end{eqnarray}
On the other hand, for $y\in B^{n-1}_{\frac{\delta+\varrho}2}$,
we have
\begin{eqnarray*}
u(y)&=&\int_{B^{+}_{\delta}}\frac{V(x)v^r(x)}{|x-y|^{n-\alpha}}dx
+\int_{B^{+}_1\backslash B^{n-1}_{\delta}}\frac{V(x)v^r(x)}{|x-y|^{n-\alpha}}dx\\
&=:& K_1(y)+K_2(y).
\end{eqnarray*}
Using \eqref{R-2} on $B^{n-1}_{\frac{\delta+\varrho}2}$, we have
\begin{eqnarray*}
 \|K_1\|_{L^{q_1}(B^{n-1}_{\frac{\delta+\varrho}2})}\le c(n,\alpha,q,r,a)\|V\|_{L^a(B^{+}_{1})}\|v\|^r_{L^{q}(B^{+}_{\delta})}.
\end{eqnarray*}
For $\frac1{m_2}=1-\frac1a-\frac rp,$
\begin{eqnarray*}
K_2(y)&\le& \frac{c(n,\alpha)}{(\delta-\varrho)^{n-\alpha}}\int_{B^{+}_1\backslash B^{+}_{\delta}} {V(x)v^r(x)} dx\\
&\le& \frac{c(n,\alpha,p,a,r)}{(\delta-\varrho)^{n-\alpha}}|B^+_1|^{\frac1{m_2}}
\|V\|_{L^a(B^{+}_{1})}\|v\|^r_{L^p(B^{+}_{1})}.
\end{eqnarray*}
Combining the above and using  Minkowski inequality we have
\begin{eqnarray}\label{R-4}
\|u\|_{L^{q_1}(B^{n-1}_{\frac{\delta+\varrho}2})}&\le& c(n,\alpha,q,r,a)\|V\|_{L^a(B^{+}_{1})}\|v\|^r_{L^{q}(B^{+}_{\delta})}
+\frac{c(n,\alpha,p,q,r,a,b)}{(\delta-\varrho)^{n-\alpha}}\nonumber\\
& &\times|B^+_1|^{\frac1{m_2}}|B^{n-1}_\delta|^{\frac1{q_1}}
\|V\|_{L^a(B^{+}_{1})}\|v\|^r_{L^p(B^{+}_{1})}.
\end{eqnarray}
Bringing \eqref{R-4} into \eqref{R-3} and choosing $\varepsilon$ small enough in condition
\[
\|U\|_{L^{b}(B^{n-1}_1)}\|V\|^\frac1r_{L^{a}(B^+_1)}\le \varepsilon(n,\alpha,p,q,r,a,b),
\]
we have
\begin{eqnarray*}
\|v\|_{L^q(B^{+}_{\varrho})}&\le& \frac12\|v\|_{L^{q}(B^{+}_{\delta})}
+c(n,\alpha,p,q,r,a,b,\varepsilon)\nonumber\\
& &\times
\big(\frac{|B^{n-1}_1|^{\frac1{m_1}}|B^+_1|^{\frac1{q}}}{(\delta-\varrho)^{n-\alpha}}
+\frac{|B^+_1|^{\frac 1{rm_2}}|B^{n-1}_1|^{\frac 1{rq_1}}}{(\delta-\varrho)^{\frac{n-\alpha}r}}\big)
\|v\|_{L^p(B^{+}_1)}+\|h\|_{L^q(B^{+}_{\frac12})}.
\end{eqnarray*} Then the usual iteration procedure (see, e.g. Lemma 4.1 in Chen and Wu \cite{CW91}, $P_{27}$) yields
\begin{eqnarray*}
\|v\|_{L^q(B^{+}_{\frac14})}&\le& c(n,\alpha,p,q,r,a,b,\varepsilon)  (\|v\|_{L^p(B^{+}_1)}+\|h\|_{L^q(B^{+}_{\frac12})}).
\end{eqnarray*}

For general $v, ~ h \in L^p(B_1^+),$ we follow the argument given in \cite{CL2010}. See, also, \cite{HWY2008}.  Let
$0\le\zeta(x)\le1$ be the measurable function such that
\begin{eqnarray*}
v(x)=\zeta(x)\int_{B^{n-1}_1}\frac{U(y)}{|x-y|^{n-\alpha}}
\big[\int_{B^{+}_1}\frac{V(z)v^r(z)}{|z-y|^{n-\alpha}}dz\big]^\frac1rdy+\zeta(x)h(x)\quad\text{for~ any}\quad x\in \mathbb{R}^{n}_+.
\end{eqnarray*}
Define the map $T$ by
\begin{eqnarray*}
T(\varphi)(x)=\zeta(x)\int_{B^{n-1}_1}\frac{U(y)}{|x-y|^{n-\alpha}}
\big[\int_{B^{+}_1}\frac{V(z)|\varphi(z)|^r}{|z-y|^{n-\alpha}}dz\big]^\frac1rdy.
\end{eqnarray*}
Similar to the above estimates, using inequality \eqref{HLSD-2} and H\"{o}lder inequality, we have
\begin{eqnarray*}
\|T(\varphi)\|_{L^p(B^+_1)}&\le& c(n,\alpha,p,r,a,b)\|U\|_{L^{b}(B^{n-1}_1)}\|V\|^\frac1r_{L^{a}(B^+_1)}\|\varphi\|_{L^p(B^+_1)}
\le\frac12\|\varphi\|_{L^p(B^+_1)},\\
\|T(\varphi)\|_{L^q(B^+_1)}&\le& c(n,\alpha,q,r,a,b)\|U\|_{L^{b}(B^{n-1}_1)}\|V\|^\frac1r_{L^{a}(B^+_1)}\|\varphi\|_{L^q(B^+_1)}
\le\frac12\|\varphi\|_{L^q(B^+_1)},
\end{eqnarray*} for $\varepsilon$ small enough.
Furthermore, for $\varphi,\psi\in L^p(B^+_1)$, it follows from  Minkowski inequality that
\begin{eqnarray*}
|T(\varphi)(x)-T(\psi)(x)|\le T(|\varphi-\psi|)(x),\quad\text{for}~x\in B^+_1.
\end{eqnarray*} Hence,
\begin{eqnarray*}
\|T(\varphi)-T(\psi)\|_{L^p(B^+_1)}\le \|T(|\varphi-\psi|)\|_{L^p(B^+_1)}\le\frac12\|\varphi-\psi\|_{L^p(B^+_1)}.
\end{eqnarray*}
Similarly, for $\varphi,\psi\in L^q(B^+_1)$, we have
\begin{eqnarray*}
\|T(\varphi)-T(\psi)\|_{L^q(B^+_1)}\le\frac12\|\varphi-\psi\|_{L^q(B^+_1)}.
\end{eqnarray*}
Define $h_j(x)=\min\{h(x),j\}.$ Then  we conclude from the contraction mapping theorem that we may find a unique $v_j\in L^q(B^+_1)$
such that
\begin{eqnarray*}
v_j(x)&=&T(v_j)(x)+\zeta(x)h_j(x)\\
&=&\zeta(x)\int_{B^{n-1}_1}\frac{U(y)}{|x-y|^{n-\alpha}}
\big[\int_{B^{+}_1}\frac{V(z)v_j^r(z)}{|z-y|^{n-\alpha}}dz\big]^\frac1rdy+\zeta(x)h_j(x).
\end{eqnarray*}
Using the {\it a priori} estimate for $v_j$ (noting that $h_j \in L^q(B^+_1)$), we have
\begin{eqnarray}\label{R-5}
\|v_j\|_{L^q(B^{+}_{\frac14})}&\le& c(n,\alpha,p,q,r,a,b,\varepsilon)\big(
\|v_j\|_{L^p(B^{+}_1)}+\|h_j\|_{L^q(B^{+}_{\frac12})}\big).
\end{eqnarray}
Observe that
\[v(x)=T(v)(x)+\zeta(x)h(x).
\]
We have
\begin{eqnarray*}
\|v_j-v\|_{L^p(B^+_1)}&\le&\|T(v_j)-T(v)\|_{L^p(B^+_1)}+\|h_j-h\|_{L^p(B^+_1)}\\
&\le& \frac12\|v_j-v\|_{L^p(B^+_1)}+\|h_j-h\|_{L^p(B^+_1)}.
\end{eqnarray*} This implies
\begin{eqnarray*}
\|v_j-v\|_{L^p(B^+_1)}\le2\|h_j-h\|_{L^p(B^+_1)}\to 0,
\end{eqnarray*}  as $j\to\infty.$ Note $ h_j\to h$ in $L^q(B^+_{1/2})$.
Sending $j$ to $\infty$ in \eqref{R-5}, we obtain Proposition \ref{Reg-prop-1}.
\end{proof}

\medskip

The dual local regularity result is the following.
\begin{prop}\label{Reg-prop-2}
Assume $1<\alpha<n,\,1<a,b\le\infty,1\le r<\infty$ and $\frac{n-1}{n-\alpha}<p<q<\infty $ satisfy
\begin{equation}\label{Cond-2}\frac{\alpha-1}{n-1}<\frac rq+\frac1a<\frac rp+\frac1a<1 \quad\text{and}\quad\frac{n-1}{ar}+\frac{n}b=\frac{\alpha-1}r+\alpha.
\end{equation}
Suppose  $u,g\in L^{p}(B^{n-1}_R), U\in L^{a}(B^{n-1}_R),V\in L^{b}(B^+_R)$ are all nonnegative functions with $g|_{B^{n-1}_{R/2}}\in
L^{q}(B^{n-1}_{R/2})$, and
\[u(y)\le\int_{B^{+}_R}\frac{V(x)}{|x-y|^{n-\alpha}}
\big[\int_{B^{n-1}_R}\frac{U(z)u^r(z)}{|z-y|^{n-\alpha}}dz\big]^\frac1rdx+g(y),\quad \forall  y\in B^{n-1}_R.
\]
 There is a $\varepsilon=\varepsilon(n,\alpha,p,q,r,a,b)>0$ and $C(n,\alpha,p,q,a,b,r,\varepsilon)>0$, such
 that if
\[\|U\|^\frac1r_{L^{a}(B^{n-1}_R)}\|V\|_{L^{b}(B^+_R)}\le \varepsilon(n,\alpha,p,q,r,a,b),
\]
then
\[\|u\|_{L^{q}(B^{n-1}_{R/4})}\le C(n,\alpha,p,q,a,b,r, \varepsilon)\big(R^{\frac{n-1}{q}-\frac{n-1}{p}}\|u\|_{L^{p}(B^{n-1}_{R})}+\|g\|_{L^{q}(B^{n-1}_{R/2})}).
\]
\end{prop}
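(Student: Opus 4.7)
My plan is to prove Proposition \ref{Reg-prop-2} by mirroring the argument for Proposition \ref{Reg-prop-1}, swapping the roles of the boundary variable (now carrying the unknown $u$) and the interior variable (now carrying the weight $V$) throughout. By a standard dilation $x\mapsto Rx$ on both the interior ball and its boundary trace, I first reduce to $R=1$; the prefactor $R^{(n-1)/q-(n-1)/p}$ on the right-hand side is precisely the one forced by the scaling of $L^p(B^{n-1}_R)$ versus $L^q(B^{n-1}_R)$, together with the homogeneity condition $\frac{n-1}{ar}+\frac{n}{b}=\frac{\alpha-1}{r}+\alpha$ in \eqref{Cond-2}.

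Under the temporary assumption $u,g\in L^q(B^{n-1}_1)$, introduce the interior-valued intermediate function
\[
w(x):=\int_{B^{n-1}_1}\frac{U(z)\,u^r(z)}{|z-x|^{n-\alpha}}\,dz, \qquad x\in B^+_1,
\]
so that the inequality of the hypothesis takes the schematic form $u(y)\le E_\alpha(V\,w^{1/r})(y)+g(y)$ (with the $z$-integral in the statement understood in its natural dual location, playing the role of $u$ in Proposition \ref{Reg-prop-1}). Applying the HLS inequality for $R_\alpha$ (Corollary \ref{HLSD-theo-dual}) with two different exponents, coupled with H\"older splitting $U\cdot u^r$ across $L^a(B^{n-1}_1)$ and $L^{p/r}$ or $L^{q/r}$, I obtain the dual of \eqref{R-1}--\eqref{R-2}:
\[
\|w\|_{L^{s_1}(B^+_1)}\le C\|U\|_{L^a}\|u\|_{L^p}^r,\qquad \|w\|_{L^{s_2}(B^+_1)}\le C\|U\|_{L^a}\|u\|_{L^q}^r,
\]
where $s_1,s_2$ are the HLS target exponents determined by $1/s_i=\frac{n}{n-1}(1/t_i-\alpha/n)$ with $1/t_i=r/p+1/a$ or $r/q+1/a$; the hypotheses in \eqref{Cond-2} are exactly what make these lie in $(1,\infty)$.

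Next, for $0<\varrho<\delta\le 1/2$, split the $x$-integral defining $u(y)$ into a near piece over $B^+_{(\delta+\varrho)/2}$ and a far piece over $B^+_1\setminus B^+_{(\delta+\varrho)/2}$. On the near piece I apply the HLS inequality for $E_\alpha$ (Theorem \ref{HLSD-theo}) to land in $L^q(B^{n-1}_\varrho)$, with right-hand side controlled by $\|V\|_{L^b}\,\|w\|_{L^{s_2}(B^+_{(\delta+\varrho)/2})}^{1/r}$; on the far piece the kernel is bounded by $(\delta-\varrho)^{-(n-\alpha)}$, and H\"older (using $p>(n-1)/(n-\alpha)$, which is where the lower bound on $p$ enters) produces a term absorbed into $\|u\|_{L^p(B^{n-1}_1)}$. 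Substituting the corresponding near/far decomposition of $w$ into these estimates and invoking the smallness $\|U\|_{L^a}^{1/r}\|V\|_{L^b}\le\varepsilon$ yields an inequality of iteration type
\[
\|u\|_{L^q(B^{n-1}_\varrho)}\le \tfrac{1}{2}\|u\|_{L^q(B^{n-1}_\delta)}+\frac{C}{(\delta-\varrho)^{\beta}}\|u\|_{L^p(B^{n-1}_1)}+\|g\|_{L^q(B^{n-1}_{1/2})}
\]
for some $\beta>0$, after which Lemma 4.1 of Chen--Wu \cite{CW91} gives the bound on $B^{n-1}_{1/4}$. To remove the temporary assumption $u,g\in L^q$, I follow verbatim the contraction-mapping device at the end of the proof of Proposition \ref{Reg-prop-1}: factor $u=T(u)+\zeta g$ for some measurable $\zeta\in[0,1]$, truncate $g_j=\min(g,j)\in L^q$, solve for $u_j\in L^q(B^{n-1}_1)$ by Banach's fixed-point theorem (the contraction constant $\le 1/2$ comes from the same $\varepsilon$-smallness applied in both $L^p$ and $L^q$), apply the a priori bound to $u_j$, and pass $j\to\infty$ using $\|u_j-u\|_{L^p}\le 2\|g_j-g\|_{L^p}\to 0$.

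The main obstacle I anticipate is purely bookkeeping: verifying that the two applications of HLS (one via $R_\alpha$ for $w$, one via $E_\alpha$ for $u$) chain together consistently, i.e.\ that the intermediate target exponents $s_1,s_2$ produced by the first HLS match the source exponents demanded by the second. This compatibility is exactly what the algebraic identity $\frac{n-1}{ar}+\frac{n}{b}=\frac{\alpha-1}{r}+\alpha$ and the strict chain $\frac{\alpha-1}{n-1}<\frac{r}{q}+\frac1a<\frac{r}{p}+\frac1a<1$ encode, so once the exponents are named, the inequalities follow mechanically from Theorem \ref{HLSD-theo} and Corollary \ref{HLSD-theo-dual}; no new analytic idea beyond the ones used in Proposition \ref{Reg-prop-1} is needed.
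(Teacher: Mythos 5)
Your proposal follows essentially the same route as the paper's proof: the same intermediate interior function (your $w$ is the paper's $v$), the same two-exponent HLS estimates for it, the same near/far splitting with the kernel bounded by $(\delta-\varrho)^{-(n-\alpha)}$ on the far piece, the same Chen--Wu iteration, and the same contraction-mapping truncation to remove the a priori $L^q$ assumption. One concrete slip should be fixed, though: you have swapped the names of the two operators throughout. Since $w(x)=\int_{B^{n-1}_1}U(z)u^r(z)|z-x|^{\alpha-n}\,dz$ carries a \emph{boundary} density to an \emph{interior} function, the estimates $\|w\|_{L^{s_i}(B^+_1)}\le C\|U\|_{L^a}\|u\|^r$ come from the extension inequality \eqref{HLSD-2}, with $\frac{1}{s_i}=\frac{n-1}{n}\bigl(\frac{1}{t_i}-\frac{\alpha-1}{n-1}\bigr)$, not from Corollary \ref{HLSD-theo-dual} with $\frac{1}{s_i}=\frac{n}{n-1}\bigl(\frac{1}{t_i}-\frac{\alpha}{n}\bigr)$ as you wrote; conversely, the outer step $u(y)\le\int_{B^+_1}V(x)w^{1/r}(x)|x-y|^{\alpha-n}\,dx+g(y)$ is an application of $R_\alpha$ and inequality \eqref{HLSD-3}, not $E_\alpha$. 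With the exponent formulas corrected in this way, the compatibility you invoke is exactly the identity $\frac{1}{q}=\frac{n}{n-1}\bigl(\frac{1}{b}+\frac{1}{q_1 r}-\frac{\alpha}{n}\bigr)$ encoded in \eqref{Cond-2}, and the rest of your argument goes through as in the paper.
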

\begin{proof}
After rescaling, we may assume $R=1$. We may further assume  $u,g\in L^{q}(B^{n-1}_{1})$, since  similar argument to that in the
proof of Proposition \ref{Reg-prop-1} yields the same estimate under the assumption of  $u,g\in L^{p}(B^{n-1}_{1})$.

 Denote
\[v(x)=\int_{B^{n-1}_1}\frac{U(y)u^r(y)}{|x-y|^{n-\alpha}}dy\quad\text{for}\quad x\in B^{+}_1.
\]
Let $p_1$ and $q_1$ be the numbers given by
\begin{eqnarray*}
\frac1{p_1}=\frac {n-1}n\big(\frac rp +\frac1a-\frac{\alpha-1}{n-1}\big),\quad
\frac1{q_1}=\frac {n-1}n\big(\frac rq+\frac1a-\frac{\alpha-1}{n-1}\big).
\end{eqnarray*}
Condition \eqref{Cond-2} indicates that $p_1, \, q_1>1$.
Similar to Proposition \ref{Reg-prop-1}, using \eqref{HLSD-2}, we have
\begin{eqnarray}
\|v\|_{L^{p_1}(B^{+}_1)}&\le& c(n,\alpha,p,a,r)\|U\|_{L^a(B^{n-1}_1)}\|u\|^r_{L^p(B^{n-1}_1)},\label{R-6}\\
\|v\|_{L^{q_1}(B^{+}_1)}&\le& c(n,\alpha,q,a,r)\|U\|_{L^a(B^{n-1}_1)}\|u\|^r_{L^q(B^{n-1}_1)}.\label{R-7}
\end{eqnarray}
Let $0<\varrho<\delta\le\frac12.$ For $y\in B^{n-1}_\delta$,
\begin{eqnarray*}
u(x)&\le&\int_{B^{+}_{\frac{\delta+\varrho}2}}\frac{V(x)v^\frac1r(x)}{|x-y|^{n-\alpha}}dx
+\int_{B^{+}_1\backslash B^{+}_{\frac{\delta+\varrho}2}}\frac{V(x)v^\frac1r(x)}{|x-y|^{n-\alpha}}dx+g(y)\\
&=:& J_3(y)+J_4(y)+g(y).
\end{eqnarray*}
 From \eqref{Cond-2}, we know $\frac{1}q=\frac n{n-1}\big(\frac1b+\frac1{q_1r}-\frac{\alpha}{n}\big)$. Using inequality \eqref{HLSD-3}, we have
\begin{eqnarray*}
 \|J_3\|_{L^q(B^{n-1}_{\varrho})}\le c(n,\alpha,q,r,b)\|V\|_{L^b(B^{+}_{1})}\|v\|^\frac1r_{L^{q_1}(B^{+}_{\frac{\delta+\varrho}2})}.
\end{eqnarray*}
Let $\frac1{m_3}=1-\frac1b-\frac1{p_1r}$ (note: $p>(n-1)/(n-\alpha)$ implies $m_3>1$). From H\"{o}lder inequality and \eqref{R-6}, it follows
\begin{eqnarray*}
J_4(y)&\le& \frac{c(n,\alpha)}{(\delta-\varrho)^{n-\alpha}}\int_{B^{+}_1\backslash B^{+}_{\frac{\delta+\varrho}2}} {V(x)v^\frac1r(x)} dx\\
&\le& \frac{c(n,\alpha,p,b,r)}{(\delta-\varrho)^{n-\alpha}}|B_1^+|^{\frac1{m_3}}
\|V\|_{L^b(B^{+}_{1})}\|v\|^\frac1r_{L^{p_1}(B^{+}_{1})}\\
&\le& \frac{c(n,\alpha,p,q,r,a,b)}{(\delta-\varrho)^{n-\alpha}}|B^{+}_1|^{\frac1{m_3}}
\|V\|_{L^b(B^{+}_{1})}\|U\|^\frac1r_{L^a(B^{n-1}_1)}\|u\|_{L^q(B^{n-1}_1)}.
\end{eqnarray*}
Combining the above and using  Minkowski inequality, we have
\begin{eqnarray}\label{R-8}
\|u\|_{L^q(B^{n-1}_{\varrho})}&\le& c(n,\alpha,q,r,b)\|V\|_{L^b(B^{+}_{1})}\|v\|^\frac1r_{L^{q_1}(B^{+}_{\frac{\delta+\varrho}2})}
+\frac{c(n,\alpha,p,q,r,a,b)}{(\delta-\varrho)^{n-\alpha}} |B^{+}_1|^{\frac1{m_3}}|B^{n-1}_\varrho|^{\frac1{q}}\nonumber\\
& &\times \|V\|_{L^b(B^{+}_{1})}\|U\|^\frac1r_{L^a(B^{n-1}_1)}\|u\|_{L^q(B^{n-1}_1)}+\|g\|_{L^q(B^{n-1}_{\frac12})}.
\end{eqnarray}
On the other hand, for $x\in B^{+}_{\frac{\delta+\varrho}2}$,
we have
\begin{eqnarray*}
v(x)&=&\int_{B^{n-1}_{\delta}}\frac{U(y)u^r(y)}{|x-y|^{n-\alpha}}dy
+\int_{B^{n-1}_1\backslash B^{n-1}_{\delta}}\frac{U(y)u^r(y)}{|x-y|^{n-\alpha}}dy\\
&=:& K_3(x)+K_4(x).
\end{eqnarray*}
Using \eqref{R-7} on $B^{+}_{\frac{\delta+\varrho}2}$, we have
\begin{eqnarray*}
 \|K_3\|_{L^{q_1}(B^{+}_{\frac{\delta+\varrho}2})}\le c(n,\alpha,q,r,a)\|U\|_{L^a(B^{n-1}_{1})}\|u\|^r_{L^{q}(B^{n-1}_{\delta})}.
\end{eqnarray*}
For $\frac1{m_4}=1-\frac1a-\frac rp $, from H\"{o}lder inequality, it yields
\begin{eqnarray*}
K_4(x)&\le& \frac{c(n,\alpha)}{(\delta-\varrho)^{n-\alpha}}\int_{B^{n-1}_1\backslash B^{n-1}_{\delta}} {U(y)u^r(y)} dy\\
&\le& \frac{c(n,\alpha,p,a,r)}{(\delta-\varrho)^{n-\alpha}}|B^{n-1}_1|^{\frac1{m_4}}
\|U\|_{L^a(B^{n-1}_{1})}\|u\|^r_{L^p(B^{n-1}_{1})}.
\end{eqnarray*}
Combining the above and using  Minkowski inequality, we have
\begin{eqnarray}\label{R-9}
\|v\|_{L^{q_1}(B^{+}_{\frac{\delta+\varrho}2})}&\le& c(n,\alpha,q,r,a)\|U\|_{L^a(B^{n-1}_{1})}\|u\|^r_{L^{q}(B^{n-1}_{\delta})}
+\frac{c(n,\alpha,p,q,r,a,b)}{(\delta-\varrho)^{n-\alpha}}\nonumber\\
& &\times|B^{n-1}_1|^{\frac1{m_4}}|B^{+}_\delta|^{\frac1{q_1}}
\|U\|_{L^a(B^{n-1}_{1})}\|u\|^r_{L^p(B^{n-1}_{1})}.
\end{eqnarray}
Bringing (\ref{R-9}) into (\ref{R-8}), for $\varepsilon$  small enough in
\[
\|V\|_{L^{b}(B^+_1)}\|U\|^\frac1r_{L^{a}(B^{n-1}_1)}\le \varepsilon(n,\alpha,p,q,r,a,b)
\]
we have
\begin{eqnarray*}
\|u\|_{L^q(B^{n-1}_{\varrho})}&\le& \frac12\|u\|_{L^{q}(B^{n-1}_{\delta})}
+c(n,\alpha,p,q,r,a,b,\varepsilon)\nonumber\\
& &\times
\big(\frac{|B^+_1|^{\frac1{m_3}}|B^{n-1}_1|^{\frac1{q}}}{(\delta-\varrho)^{n-\alpha}}
+\frac{|B^{n-1}_1|^{\frac 1{rm_4}}|B^+_1|^{\frac 1{rq_1}}}{(\delta-\varrho)^{\frac{n-\alpha}r}}\big)
\|u\|_{L^p(B^{n-1}_1)}+\|g\|_{L^q(B^{n-1}_{\frac12})}.
\end{eqnarray*} Using the standard iteration, we arrive
\begin{eqnarray*}
\|u\|_{L^q(B^{n-1}_{\frac14})}&\le& c(n,\alpha,p,q,r,a,b)(\|u\|_{L^p(B^{n-1}_1)}+\|g\|_{L^q(B^{n-1}_{\frac12})}).
\end{eqnarray*}
%
\end{proof}


 \textbf{Proof of Theorem \ref{Regularity-2}.}  For $R>0$, define
\begin{eqnarray*}
u_R(y)&=&\int_{\Sigma_R^{n}}\frac{v^\kappa(x)}{|x-y|^{n-\alpha}}dx,\quad
v_R(x)=\int_{\Sigma_R^{n-1}}\frac{u^\theta(y)}{|x-y|^{n-\alpha}}dy.
\end{eqnarray*}
Thus, from system \eqref{sys-1}, we have
\begin{eqnarray*}
u(y)&=&\int_{B_R^{+}}\frac{v^\kappa(x)}{|x-y|^{n-\alpha}}dx+u_R(y),\\
v(x)&=&\int_{B_R^{n-1}}\frac{u^\theta(y)}{|x-y|^{n-\alpha}}dy+v_R(x).
\end{eqnarray*}
We prove this  theorem in two steps.

\textbf{Step $1$.}  We show: if $u\in L^{\theta+1}_{loc}(\partial\mathbb{R}^{n}_+),$  then $ v\in
L^{\kappa+1}_{loc}(\overline{\mathbb{R}^{n}_+}), v_R\in  L_{loc}^\infty(B^{+}_{R}\cup B^{n-1}_{R}), $ and $u_R\in
 L_{loc}^{\infty}(B^{n-1}_R).$

We firstly show that if $u\in L^{\theta+1}_{loc}(\partial\mathbb{R}^{n}_+),$ then
\[
v\in L^{\kappa+1}_{loc}(\overline{\mathbb{R}^{n}_+})\quad\text{and}\quad v_R\in  L_{loc}^\infty(B^{+}_{R}\cup B^{n-1}_{R}).
\]
In fact, since $u\in L^{\theta+1}_{loc}(\partial\mathbb{R}^{n}_+),$ we have $u<\infty$ a.e. in $\partial\mathbb{R}^{n}_+.$ This
implies $v<\infty$ a.e. in $\mathbb{R}^{n}_+$.  Hence, there exists an $x_0\in B_\frac{R}2^+$, such that $v(x_0)<\infty,$ that is,
\begin{eqnarray*}
\int_{\Sigma_R^{n-1}}\frac{u^\theta(y)}{|y|^{n-\alpha}}dy
\le c\int_{\Sigma_R^{n-1}}\frac{u^\theta(y)}{|x_0-y|^{n-\alpha}}dy\le cv(x_0)<\infty.
\end{eqnarray*}
For $0<\delta<1, x\in B^{+}_{\delta R}$, it holds
\begin{eqnarray*}
v_R(x)&\le&\frac{c(n,\alpha) }{(1-\delta)^{n-\alpha}}\int_{\Sigma_R^{n-1}}\frac{u^\theta(y)}{|y|^{n-\alpha}}dy.
\end{eqnarray*} This shows $v_R\in L_{loc}^\infty(B^{+}_{R}\cup B^{n-1}_{R} ).$ On the other hand,
using inequality \eqref{HLSD-2} with $\frac1{\kappa+1}=\frac{n-1}n\big(\frac{n-\alpha}{n-1}-\frac1{\theta+1}\big),$ we have
\begin{eqnarray*}
\big[\int_{R^n_+}\big(\int_{B_R^{n-1}}\frac{u^\theta(y)}{|x-y|^{n-\alpha}}dy \big)^{\kappa+1}dx\big]^\frac1{\kappa+1}\le
c\|u\|_{L^{\theta+1}(B^{n-1}_{R})}<\infty.
\end{eqnarray*}
This implies $v\in L^{\kappa+1}_{loc}(B^{+}_{R}\cup B^{n-1}_{R}).$  Since $R$ is arbitrary, we have $v\in
L^{\kappa+1}_{loc}(\overline{\mathbb{R}^n_+})$.

Next, we show $u_R\in  L^{\infty}_{loc}(B^{n-1}_R).$ Indeed, since $u(y) <\infty$ a.e., there is a $y_0\in B^{n-1}_\frac{R}2$, such
that $u(y_0)<\infty$. Thus
\begin{eqnarray*}
\int_{\Sigma_R^{n}}\frac{v^\kappa(x)}{|x|^{n-\alpha}}dx
\le c\int_{\Sigma_R^{n}}\frac{v^\kappa(x)}{|x-y_0|^{n-\alpha}}dx\le cu(y_0)<\infty.
\end{eqnarray*}
For $0<\delta<1, y\in B^{n-1}_{\delta R}$, we have
\begin{eqnarray*}
u_R(y)\le\frac{c(n,\alpha)}{(1-\delta)^{n-\alpha}}\int_{\Sigma_R^{n}} \frac{v^\kappa(v)}{|x|^{n-\alpha}}dx<\infty.
\end{eqnarray*}
That is, $u_R\in  L^{\infty}_{loc}(B^{n-1}_R)$.

\textbf{Step $2$.} We show $u\in C^\infty(\partial\mathbb{R}^{n}_+)$ and $v\in C^\infty(\overline{\mathbb{R}^{n}_+})$. To do this,
we discuss two cases.

\textbf{ Case $1$.}
$\frac{\alpha-1}{n-\alpha}<\theta<\frac{n+\alpha-2}{n-\alpha}.$

Since $\theta<\frac{n+\alpha-2}{n-\alpha},$ we can see from (\ref{k-t}), that $\kappa>\frac{n+\alpha }{ n-\alpha }$ and
\begin{eqnarray*}
(\theta+1)+\frac{(n-1)(\kappa+1)}{n}=\big(1-\frac\alpha n\big)(\kappa+1)(\theta+1).
\end{eqnarray*}
That is,
\begin{eqnarray*}
\kappa\theta-\frac\alpha n(\kappa+1)\theta-1=\frac{\alpha-1} n(\kappa+1)>0.
\end{eqnarray*}
Thus, we have
\[\big(\kappa-\frac\alpha n(\kappa+1)\big)\theta>1.\]
This implies  $\kappa-\frac\alpha n(\kappa+1)>\frac1\theta.$ On the
other hand, since  $\kappa>\frac{n+\alpha}{ n-\alpha }$, we have $\kappa-\frac\alpha n(\kappa+1)>1$. Hence,  we can choose a
fixed number $r$ such that
\[
1<\kappa-\frac\alpha n(\kappa+1)\le r\le \kappa,\quad\text{and}\quad r>\frac1\theta.
\] 
We have
\begin{eqnarray*}
u^\frac1r(y)&\le&\big(\int_{B_R^{+}}\frac{v^\kappa(x)}{|x-y|^{n-\alpha}}dx\big)^\frac1r+u_R^\frac1r(y), \end{eqnarray*} which yields
\begin{eqnarray*}
v(x)&=&\int_{B_R^{n-1}}\frac{u^{\theta-\frac1r}(y)u^\frac1r(y)}{|x-y|^{n-\alpha}}dy+v_R(x)\\
&\le&\int_{B_R^{n-1}}\frac{u^{\theta-\frac1r}(y)}{|x-y|^{n-\alpha}}
\big(\int_{B_R^{+}}\frac{v^{\kappa-r}(x)v^{r}(x)}{|x-y|^{n-\alpha}}dx\big)^\frac1r dy+h_R(x),
\end{eqnarray*}
where
\[h_R(x)=\int_{B_R^{n-1}}\frac{u^{\theta-\frac1r}(y)u_R^\frac1r(y)}{|x-y|^{n-\alpha}}
dy+v_R(x).
\]


Since $u_R\in L^\infty_{loc}(\partial\mathbb{R}^{n}_+)$, for any  $ x\in B_{ R}^+ $,  we have
\begin{eqnarray*}
\int_{B_R^{n-1}}\frac{u^{\theta-\frac1r}(y)u_R^\frac1r(y)}{|x-y|^{n-\alpha}} dy&\le&\|u_R\|^\frac1r_{L^\infty(B_R^{n-1})}
\int_{B_R^{n-1}}\frac{u^{\theta-\frac1r}(y)}{|x-y|^{n-\alpha}}dy.\\%
\end{eqnarray*}
Note $u\in L^{\theta+1}_{loc}(\partial\mathbb{R}^{n}_+)$. We know, via inequality \eqref{HLSD-2}, that $h_R\in L^{q_0}(B^+_R\cup
B_R^{n-1})$ with $q_0$ given by
\begin{eqnarray*}
\frac 1{q_0} &=& \frac {n-1}n(\frac{\theta-1/r}{\theta +1}-\frac{\alpha -1}{n-1})\\
&=& \frac {n-1}n(\frac{n-\alpha }{n-1}-\frac{1+1/r}{\theta +1})\\ 
&=& \frac {n-1}n(\frac{n-\alpha }{n-1}-\frac{1}{\theta +1})-\frac{n-1}{rn(\theta +1)}\\
&=&\frac 1{\kappa+1}-\frac{n-1}{rn(\theta +1)}.
\end{eqnarray*}
For $\varepsilon>0$ small enough, we can choose $r= \kappa-\frac\alpha
n(\kappa+1)+\varepsilon>1+\varepsilon$ so
 that
\begin{eqnarray*}
q_0
&=&\big(\frac 1{\kappa+1}-\frac{n-1}{rn(\theta +1)}\big)^{-1}\\
&=&\frac{rn(\kappa+1)}{n(r+1)-(n-\alpha)(\kappa+1)}\\
&=&\frac{rn(\kappa+1)}{(n-\alpha)(\kappa+1)+n\varepsilon-(n-\alpha)(\kappa+1)}\\
&=&\frac{rn(\kappa+1)}{n\varepsilon}>\frac{\kappa+1}{\varepsilon}
\end{eqnarray*}
can be any large number when we choose $\varepsilon$ small enough. In the above derivation (in the second equality), we used the equation:
$\frac{1}{\theta+1}=\frac{n-\alpha}{n-1}-\frac{n}{n-1}\frac{1}{\kappa+1}$, which can be deduced from \eqref{k-t} easily. Hence, it
follows that $h_R\in L^{q }(B^+_R\cup B_R^{n-1})$ for any $q<\infty.$

Now, in Proposition \ref{Reg-prop-1}, take
\begin{eqnarray*}
U(y)&=&u^{\theta-\frac1r}(y),\quad V(x)=v^{\kappa-r}(x),\\
a&=&\frac{\kappa+1}{\kappa-r},\quad b=\frac{\theta+1}{\theta-\frac1r},\quad p=\kappa+1 >\frac n{n-\alpha}.
\end{eqnarray*}
Since $u \in L^{\theta+1}_{loc}(\partial\mathbb{R}^{n}_+)$ and $v\in L^{\kappa+1}_{loc}(\overline{\mathbb{R}^n_+}),$ we have $U\in
L^{b}(B^{n-1}_R)$ and $V\in L^{a}(B^{+}_R)$. Moreover, it is easy to verify via \eqref{k-t}, that
\[\frac{n}{ra}+\frac{n-1}b=\frac\alpha r+(\alpha-1),\quad\text{and}\quad \frac rp+\frac1a=\frac{\kappa}{\kappa+1}<1.
\]
For
\[\kappa+1<q<\infty,
\]
it is obvious that $\frac{r}q+\frac1a>\frac{\alpha}{n}.$  We know from Proposition \ref{Reg-prop-1} that $v|_{B^+_\frac R4}\in
L^q(B^+_\frac R4)$ for small enough $R$. Hence, we can choose a $q$ satisfying $n\kappa/\alpha<q<\infty$ such that
\begin{eqnarray*}
u(y)&=&\int_{B^+_\frac R4}\frac{v^\kappa(x)}{|x-y|^{n-\alpha}}dx+u_{\frac R4}(y),\\
&\le&\big(\int_{B^+_\frac R4}|x-y|^{\frac{(\alpha- n)q}{q-\kappa}}dx\big)^\frac{q-\kappa}{q}\|v\|^\kappa_{L^q(B^+_\frac R4)}+u_{\frac R4}(y)\\
&\le& c(n,\alpha,q) R^{\frac{n (q-\kappa)}{ q}-(n-\alpha)}\|v\|^\kappa_{L^q(B^+_\frac R4)}+u_{\frac R4}(y)<\infty.
\end{eqnarray*} 
This also implies $u|_\frac R8\in L^\infty(B^{n-1}_\frac R8)$.
Since every point can be viewed as a center, we have $u\in L^\infty_{loc}(\partial\mathbb{R}^{n}_+)$,
and hence $v\in L^\infty_{loc}(\overline{\mathbb{R}^{n}_+})$.

 For any $R>0,$ since
\begin{eqnarray*}
\int_{\Sigma_R^{n-1}}\frac{u^\theta(y)}{|y|^{n-\alpha}}dy<\infty,\quad
\int_{\Sigma_R^{n}}\frac{v^\kappa(x)}{|x|^{n-\alpha}}dx<\infty,
\end{eqnarray*}
we know $v_R\in C^\infty(B^+_R\cup B^{n-1}_R)$ and $u_R\in  C^\infty(B^{n-1}_R)$. The first derivative of
$\int_{B_R^{+}}\frac{v^\kappa(x)}{|x-y|^{n-\alpha}}dx$ is at least H\"{o}lder continuous in $B^{n-1}_R$ (since $\alpha>1$). Since $R$ is arbitrary, we know $u$
is $C^{1, \tau}$ continuous on $\partial\mathbb{R}^{n}_+$, and hence $v$ is $C^{1, \tau}$ continuous on
${\partial\mathbb{R}^{n}_+}.$ Direct computation also shows  $v$ is $C^{1, \tau}$ continuous in $\overline{\mathbb{R}^{n}_+}$. By bootstrap, we conclude that $u\in C^\infty(\partial\mathbb{R}^{n}_+)$ and $v\in
C^\infty(\overline{\mathbb{R}^{n}_+})$.

\smallskip

\textbf{Case $2.$}  For $\frac{n+\alpha-2}{n-\alpha}\le
\theta<\infty.$ In this case, from (\ref{k-t}),  it is easy to check
$\frac\alpha{n-\alpha}<\kappa\le\frac{n+\alpha}{n-\alpha},$ and
\begin{eqnarray*}
(\kappa+1)+\frac{n(\theta+1)}{n-1}=\big(1-\frac{\alpha-1}{n-1}\big)(\kappa+1)(\theta+1).
\end{eqnarray*}
That is,
\begin{eqnarray*}
\kappa\theta-\frac{\alpha-1}{n-1}(\theta+1)\kappa-1=\frac{\alpha} {n-1}(\theta+1)>0.
\end{eqnarray*}
Thus, we have
\[\big(\theta-\frac{\alpha-1}{n-1}(\theta+1)\big)\kappa>1.\]
This implies $\theta-\frac{\alpha-1}{n-1}(\theta+1)>\frac1\kappa.$ On the other hand, since  $\theta\ge\frac{n+\alpha-2}{ n-\alpha
}$, we have $\theta-\frac{\alpha-1}{n-1}(\theta+1)\ge1$. Hence,  we can choose a fixed number $r$ such atht
\[1\le\theta-\frac{\alpha-1}{n-1}(\theta+1)\le r\le\theta,\quad\text{and}\quad r>\frac1\kappa,
\] and then
\begin{eqnarray*}
v^\frac1r(x)&\le&\big(\int_{B_R^{n-1}}\frac{u^{\theta}(y)}{|x-y|^{n-\alpha}}dy\big)^\frac1r+v_R^\frac1r(x).
\end{eqnarray*}
Thus,
\begin{eqnarray*}
u(y)&=&\int_{B_R^{+}}\frac{v^{\kappa-\frac1r}(x)}{|x-y|^{n-\alpha}}
\big(\int_{B_R^{n-1}}\frac{u^\theta(y)}{|x-y|^{n-\alpha}}dy\big)^\frac1rdx+g_R(y),
\end{eqnarray*}
where
\[g_R(y)=\int_{B_R^{+}}\frac{v^{\kappa-\frac1r}(x)v_R^\frac1r(x)}{|x-y|^{n-\alpha}} dx
+u_R(y).\\
\]
For  any $y\in B^{n-1}_{R}$,
\begin{eqnarray*}
\int_{B_R^{+}}\frac{v^{\kappa-\frac1r}(x)v_R^\frac1r(x)}{|x-y|^{n-\alpha}} dx
\le \|v_R\|^\frac1r_{L^{\infty}(B^{+}_R)}\int_{B^{+}_R }\frac{v^{\kappa-\frac1r}(x)}{|x-y|^{n-\alpha}} dx
\end{eqnarray*}
By inequality \eqref{HLSD-3}, we have $g_R\in L^{q_1}(B^{n-1}_R)$ with $\theta+1< q_1\le\infty,$ where
$q_1=\frac{(n-1)r(\theta+1)}{(n-1)(r+1)-(n-\alpha)(\theta+1)}.$ As in case 1: $q_1$ can be chosen as any larger number.

Now in Proposition \ref{Reg-prop-2}, take
\begin{eqnarray*}
& &U(y)=u^{\theta-r}(y),\quad V(x)=v^{\kappa-\frac1r}(x),\\
& &a=\frac{\theta+1}{\theta-r}\quad
b=\frac{\kappa+1}{\kappa-\frac1r},\quad p=\theta+1.
\end{eqnarray*}
Since $u \in L^{\theta+1}_{loc}(\partial\mathbb{R}^{n}_+)$ and $v\in
L^{\kappa+1}_{loc}(\overline{\mathbb{R}^n_+}),$ we have $U\in
L^{a}(B^{n-1}_R)$ and $V\in L^{b}(B^{+}_R)$, and it is easy to
verify that
\[
\frac{n-1}{ar}+\frac nb= \frac{\alpha-1}r+\alpha,\quad\frac
rp+\frac1a=\frac{\theta}{\theta+1}<1,
\]
For any $\theta+1< q<\infty$, it follows from Proposition
\ref{Reg-prop-2} that $u\in L^{q}(B^{n-1}_R)$.  Since every point
can be viewed as a center, we have $u\in
L^q_{loc}(\partial\mathbb{R}^{n}_+)$.

Now, similar to the argument in previous case, we have $v\in L^\infty_{loc}(\overline{\mathbb{R}^{n}_+})$, and then $u\in
L^\infty_{loc}(\partial\mathbb{R}^{n}_+)$. It follows that $u\in C^\infty(\partial\mathbb{R}^{n}_+)$ and $v\in
C^\infty(\overline{\mathbb{R}^{n}_+})$.


\hfill$\Box$

%
%

\section{\textbf{Miscellaneous} \label{Section 5}}
\medskip

In this section, we shall include some related results concerning the computation of the sharp constants, operators on a bounded
domain, inequality for limit case ($\alpha=n$), fractional Laplacian operators and some non-existence to a system of integral equations.

\subsection{Integral inequalities in a bounded domain}\label{subsect 5.1}
For a smooth and bounded domain $\Omega \subset \mathbb{R}^n$, we introduce the following operators
\begin{equation}\label{5-1}
 \widetilde{ E_\alpha} f(x) =\int_{\partial \Omega }\frac{f(y)}{|x-y|^{n-\alpha}}dS_y, \quad ~~~~~~~~~~\forall
\, x\in \Omega
\end{equation}
and
\begin{equation}\label{5-2}
\widetilde{ R_\alpha} g(y) =\int_{ \Omega }\frac{g(x)}{|x-y|^{n-\alpha}}dV_x, \quad ~~~~~~~~~~\forall \, y\in \partial \Omega.
\end{equation}
From Theorem \ref{HLSD-theo}, we first show

\begin{crl}\label{ball-1} Assume $1<\alpha<n$. For any  $f\in L^\frac{2n}{n+\alpha-2}
(\partial B_{R}),g\in L^\frac{2n}{n+\alpha} (B_{R})$
with $R>0$,
\begin{eqnarray}
\|\widetilde{E_\alpha} f\|_{L^\frac{2n}{n-\alpha}(B_{R})}
&\le& C_e(n,\alpha,\frac{2(n-1)}{n+\alpha-2})\|f\|_{L^\frac{2(n-1)}{n+\alpha-2}(\partial B_{R})},\label{B-1}\\
\|\widetilde{R_\alpha} g\|_{L^\frac{2(n-1)}{n-\alpha}(\partial B_{R})} &\le&
C_r(n,\alpha,\frac{2n}{n+\alpha})\|g\|_{L^\frac{2n}{n+\alpha}(B_{R})}.\label{B-2}
\end{eqnarray} Moreover the best constants $C_e(n,\alpha, \frac{2(n-1)}{n+\alpha-2})$  is given by
\begin{eqnarray*}
C_e(n,\alpha, \frac{2(n-1)}{n+\alpha-2})&=&(n\omega_{n})^{-\frac{n+\alpha-2}{2(n-1)}}\big(\int_{B_{1}(x_1)}\big(\int_{\partial B_{1}(x_1)}\frac{1}{|\xi-z|^{n-\alpha}}d{S_z}\big)^\frac{2n}
{n-\alpha}d\xi\big)^\frac{n-\alpha}{2n}.\\
\end{eqnarray*} where  $x_1=(0,-1),$  and $C_r(n,\alpha,\frac{2n}{n+\alpha})=C_e(n,\alpha, \frac{2(n-1)}{n+\alpha-2}).$ In particular, for $\alpha=2$,
\begin{equation}\label{sharpC1}
C_e(n,2, \frac{2(n-1)}{n})=C_r(n,2,\frac{2n}{n+2})= n^{\frac{n-2}{2(n-1)}}\omega_{n}^{1-\frac1n-\frac{1}{2(n-1)}}.
\end{equation}
\end{crl}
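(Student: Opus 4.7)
The plan is to transfer inequality \eqref{HLSD-2} from the half space to a ball via a Kelvin transform based at a boundary point. Set $x_1=(0,\ldots,0,-1)$ and consider the inversion in the sphere of radius $\sqrt{2}$ centered at the origin $0\in\partial\mathbb{R}^n_+$; this inversion interchanges $\mathbb{R}^n_+$ with $B_1(x_1)$ and $\partial\mathbb{R}^n_+$ with $\partial B_1(x_1)$. At the critical exponents $p=2(n-1)/(n+\alpha-2)$ and $q=2n/(n-\alpha)$, the weights $\tau_1,\tau_2$ appearing in Lemma \ref{Kelvin formula} both vanish, so the Kelvin change-of-variable carried out there shows that pulling a function $f$ on $\partial B_1(x_1)$ back to $F$ on $\partial\mathbb{R}^n_+$ (with a conformal weight of order $(n+\alpha-2)$) preserves both $\|F\|_{L^p(\partial\mathbb{R}^n_+)}=\|f\|_{L^p(\partial B_1(x_1))}$ and $\|E_\alpha F\|_{L^q(\mathbb{R}^n_+)}=\|\widetilde{E_\alpha} f\|_{L^q(B_1(x_1))}$. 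This immediately gives \eqref{B-1} on $B_1(x_1)$ with the same sharp constant $C_e$; a dilation yields the inequality on $B_R$ with the same constant by homogeneity, since the exponents are conformally balanced. Inequality \eqref{B-2} with $C_r=C_e$ then follows by H\"older duality, exactly as Corollary \ref{HLSD-theo-dual} was deduced from Theorem \ref{HLSB}.

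To extract the explicit value of the best constant, I combine this equivalence with the classification in Theorem \ref{Classification}. An extremal $f(y)=c(|y|^2+d^2)^{-(n+\alpha-2)/2}$ on $\partial\mathbb{R}^n_+$ (taking $y_0=0$, $d=1$) pulls back, up to a constant multiple, to the constant function $1$ on $\partial B_1(x_1)$. Hence $f\equiv 1$ is an extremal for \eqref{B-1}, and the sharp constant is realized at it. Computing the ratio
\[
C_e\bigl(n,\alpha,\tfrac{2(n-1)}{n+\alpha-2}\bigr)=\frac{\|\widetilde{E_\alpha} 1\|_{L^{2n/(n-\alpha)}(B_1(x_1))}}{\|1\|_{L^{2(n-1)/(n+\alpha-2)}(\partial B_1(x_1))}}
\]
and using $|\partial B_1(x_1)|=n\omega_n$ yields the stated closed form.

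For the sharper identity \eqref{sharpC1} at $\alpha=2$, I would invoke Newton's shell theorem: the function $\widetilde{E_2} 1(\xi)=\int_{\partial B_1(x_1)}|\xi-z|^{2-n}\,dS_z$ is, up to a universal constant, the Newtonian potential of a uniform surface charge on $\partial B_1(x_1)$, hence constant on $B_1(x_1)$. Evaluating at the center $x_1$ gives $\widetilde{E_2} 1\equiv n\omega_n$ on $B_1(x_1)$. The remaining step is direct arithmetic to rewrite $(n\omega_n)\cdot |B_1|^{(n-2)/(2n)}/(n\omega_n)^{n/(2(n-1))}$ in the stated form $n^{(n-2)/(2(n-1))}\omega_n^{1-1/n-1/(2(n-1))}$.

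The main obstacle is the careful bookkeeping in the Kelvin change of variables: one needs to verify that the interior volume element and the boundary surface element transform with precisely matching conformal weights so that the exponents cancel out to zero at the critical $(p,q)$. This is exactly the calculation behind Lemma \ref{Kelvin formula}, but it has to be spelled out simultaneously for the $L^q(B)$ integral and the $L^p(\partial B)$ integral. For general $\alpha\neq 2$ the analogous Newton-shell shortcut is unavailable (indeed $\widetilde{E_\alpha} 1$ is no longer constant on the ball), which is precisely why the explicit numerical value \eqref{sharpC1} is recorded only in the case $\alpha=2$, while for general $\alpha$ only the integral representation is given.
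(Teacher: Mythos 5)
Your overall strategy is exactly the paper's: a Kelvin transform carrying the half space to a ball, with conformal weights that cancel at the critical exponents so that both the $L^p$ boundary norm and the $L^q$ interior norm are preserved; the extremal bubble pulls back to the constant function $1$ on the sphere; and for $\alpha=2$ harmonicity (Newton's shell theorem) makes $\widetilde{E_2}1$ constant, equal to $n\omega_n$, from which the arithmetic for \eqref{sharpC1} follows (your exponent computation checks out). The dilation step and the deduction of \eqref{B-2} by duality are also fine.

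There is, however, one concrete error in the setup: you invert in a sphere centered at the origin, which lies \emph{on} $\partial\mathbb{R}^n_+$. An inversion centered at a point of the boundary hyperplane maps that hyperplane to itself and hence maps $\mathbb{R}^n_+$ onto $\mathbb{R}^n_+$, not onto a ball; carried out literally, your change of variables produces nothing new. The correct choice (the one used in the paper) is the inversion of radius $\lambda$ centered at $x_0=(0,-\lambda)$, a point strictly below the boundary; this sends $\partial\mathbb{R}^n_+$ to the sphere $\partial B_{\lambda/2}(x_1)$ with $x_1=(0,-\lambda/2)$ and $\mathbb{R}^n_+$ to its interior (take $\lambda=2$ to land on $B_1((0,-1))$, and vary $\lambda$ — or dilate, as you propose — to reach general $R$). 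With that correction, the identity $\frac{|\eta-x|}{\lambda}\frac{|\xi-x|}{\lambda}|\xi^{x,\lambda}-\eta^{x,\lambda}|=|\xi-\eta|$ and the Jacobian factors give precisely the norm identities you assert, and the rest of your argument goes through as written.
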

\begin{proof} For given $\mu\ne0$,
define
\[\omega_{\mu,x_0,\lambda}(y)=\big(\frac\lambda{|y-x_0|}\big)^\mu \omega(y^{x_0,\lambda}),\quad y^{x_0,\lambda}=\frac{\lambda^2(y-x_0)}{|y-x_0|^2}+x_0,
\] where $y\in \overline{\mathbb{R}^n_+},x_0=(x'_0,-\lambda),\lambda>0.$
Directly computation  shows
\begin{eqnarray*}
\int_{\partial\mathbb{R}^{n}_+}|f(y)|^\frac{2(n-1)}{n+\alpha-2}dy
&=&\int_{\partial B_{\frac \lambda2}(x_1)}|f(z^{x_0,\lambda})|^\frac{2(n-1)}
{n+\alpha-2}\big(\frac\lambda{|y-x_0|}\big)^{2(n-1)}d{S_z}\quad (\mbox{let} \, \, y=z^{x_0,\lambda})\\
&=&\int_{\partial B_{\frac \lambda2}(x_1)}|f_{\mu, x_0,\lambda}(z)|^\frac{2(n-1)}{n+\alpha-2}
\big(\frac\lambda{|y-x_0|}\big)^{2(n-1)-\mu\frac{2(n-1)}{n+\alpha-2}}d{S_z},
\end{eqnarray*}
and
\begin{eqnarray*}
& &\int_{\mathbb{R}^{n}_+}|E_\alpha f(x)|^\frac{2n}{n-\alpha}dx\\
 &=&\int_{B_{\frac \lambda2}(x_1)}\big|\int_{\partial B_{\frac
\lambda2}(x_1)}\frac{f(z^{x_0,\lambda})}{|\xi^{x_0,\lambda}-z^{x_0,\lambda}|^{n-\alpha}}
\big(\frac\lambda{|y-x_0|}\big)^{2(n-1)}d{S_z}
\big|^\frac{2n}{n-\alpha}\big(\frac\lambda{|\xi-x_0|}\big)^{2n}d\xi\\
&=&\int_{B_{\frac \lambda2}(x_1)}\big|\int_{\partial B_{\frac
\lambda2}(x_1)}\frac{f_{\mu,x_0,\lambda}(z)}{|\xi-z|^{n-\alpha}}\big(\frac\lambda{|y-x_0|}\big)^{\gamma_1}d{S_z}
\big|^\frac{2n}{n-\alpha}\big(\frac\lambda{|\xi-x_0|}\big)^{\gamma_2}d\xi,
\end{eqnarray*} where $x_1=(x_0,-\lambda/2)$, $\gamma_1=2(n-1)-\mu-(n-\alpha),$ and $\gamma_2=2n-(n-\alpha)\frac {2n}{n-\alpha}=0.$

In the case of $\mu=n+\alpha-2$, we will denote $f_{\mu, x, \lambda}$ as $f_{x, \lambda}$. Thus our notation is consistent with
those in Section \ref{Section 3}. In fact, it is clear from the above that for $\mu={n+\alpha-2}$,
\[\|f\|_{L^\frac{2(n-1)}{n+\alpha-2}(\partial\mathbb{R}^{n}_+)}
=\|f_{x_0,\lambda}\|_{L^\frac{2(n-1)}{n+\alpha-2}(\partial B_{\frac \lambda2}(x_1))}
\]
and
\[\|E_\alpha f\|_{L^\frac{2n}{n-\alpha}(\mathbb{R}^{n}_+)}=\|\widetilde{E_\alpha} (f_{x_0,\lambda})\|_{L^\frac{2n}{n-\alpha}(B_{\frac \lambda2}(x_1))}.
\]
Note: $(f_{x_0,\lambda})_{x_0,\lambda}=f$,  inequality \eqref{B-1} follows from inequality \eqref{HLSD-2}.

In the case of $\mu={n+\alpha}$, we have
\[\|g\|_{L^\frac{2n}{n+\alpha}(\mathbb{R}^{n}_+)}
= \|g_{\mu, x_0,\lambda}\|_{L^\frac{2n}{n+\alpha}(B_{\frac \lambda2}(x_1))}
\]
and
\[\|R_\alpha g\|_{L^\frac{2(n-1)}{n-\alpha}(\partial \mathbb{R}^{n}_+)}=\|\widetilde{R_\alpha}
(g_{\mu,x_0,\lambda})\|_{L^\frac{2(n-1)}{n-\alpha}(\partial B_{\frac \lambda2}(x_1))}.
\]
Inequality \eqref{B-2} follows from inequality \eqref{HLSD-3}.

Now we compute the best constant $C_e(n,\alpha, p)$. From Theorem \ref{Classification}, we know that
\[f(y)=\big(\frac{\lambda}{|y-x_0|}\big)^{n+\alpha-2}, \quad\forall\,y\in\partial\mathbb{R}^{n}_+
\]
is an extremal function to inequality \eqref{HLSD-2} for any $\lambda>0$ and $x_0=(0, -\lambda)$. Let $x_1=(0, -\lambda/2)$ and $\mu=n+\alpha-2$.  Then for
$y\in \partial B_{\frac \lambda2}(x_1)$,
\begin{eqnarray*}
f_{\mu,x_0,\lambda}(y)&=&\big(\frac{\lambda}{|y-x_0|}\big)^{n+\alpha-2}
\big(\frac{\lambda}{|y^{x_0,\lambda}-x_0|}\big)^{n+\alpha-2}\\
&=&\big(\frac{\lambda}{|y-x_0|}\big)^{n+\alpha-2} \big(\frac{|y-x_0|}{\lambda}\big)^{n+\alpha-2}=1;
\end{eqnarray*}
And,
\begin{eqnarray*}
\|\widetilde{E_\alpha} (f_{\mu,x_0,\lambda})\|^\frac{2n}{n-\alpha}_{L^{\frac{2n}{n-\alpha}}( B_{\frac \lambda2}(x_1))}
&=&\int_{ B_{\frac \lambda2}(x_1)}\big(\int_{\partial B_{\frac \lambda2}(x_1)}\frac{f_{\mu,x_0,\lambda}(z)}{|\xi-z|^{n-\alpha}}d{S_z}\big)^\frac{2n}{n-\alpha}d\xi\\
&=&\int_{ B_{\frac \lambda2}(x_1)}\big(\int_{\partial B_{\frac \lambda2}(x_1)}\frac{1}{|\xi-z|^{n-\alpha}}d{S_z}\big)^\frac{2n}{n-\alpha}d\xi\\
&=&(\frac\lambda2)^{\frac{n+\alpha-2}2\cdot \frac{2n}{n-\alpha}}\int_{B_{1}(x_2)}\big(\int_{\partial B_{1}(x_2)}\frac{1}{|\xi-z|^{n-\alpha}}d{S_z}\big)^\frac{2n}{n-\alpha}d\xi,
\end{eqnarray*} where $x_2=\big(\frac{\lambda}2\big)^{-1}x_1=(0,-1)$.
On the other hand,
\begin{eqnarray*}
\|f\|_{L^\frac{2(n-1)}{n+\alpha-2}(\partial\mathbb{R}^{n}_+)}
&=&\|f_{\mu,x_0,\lambda}\|_{L^\frac{2(n-1)}{n+\alpha-2}(\partial B_{\frac \lambda2}(x_1))}\\
&=&\big(\int_{\partial B_{\frac \lambda2}(x_1)}d S_z\big)^\frac{n+\alpha-2}{2(n-1)}\\
&=&\big(n\omega_{n}\big(\frac{\lambda}2\big)^{n-1}\big)^\frac{n+\alpha-2}{2(n-1)}.
\end{eqnarray*}
Thus, we have
\begin{eqnarray*}
C_e(n,\alpha, \frac{2(n-1)}{n+\alpha-2})&=&\big(\int_{B_{\frac \lambda2}(x_1)}|E_\alpha f_{\mu,x_0,\lambda}(\xi)|^\frac{2n}{n-\alpha}
d\xi\big)^\frac{n-\alpha}{2n}\cdot\|f_{\mu,x_0,\lambda}\|_{L^\frac{2(n-1)}{n+\alpha-2}(\partial B_{\frac \lambda2}(x_1))}^{-1}\\
&=&(n\omega_{n})^{-\frac{n+\alpha-2}{2(n-1)}}\big(\int_{B_{1}(x_2)}\big(\int_{\partial B_{1}(x_2)}\frac{1}{|\xi-z|^{n-\alpha}}d{S_z}\big)^\frac{2n}{n-\alpha}d\xi\big)^\frac{n-\alpha}{2n}.
\end{eqnarray*}

For general $\alpha>1$, it is not easy to obtain the  precise value for the sharp constant.
However, for $\alpha=2$, we can
identify it.

Observe: for $\alpha=2$, if $\lambda=2, \, x_0=(0, -\lambda), \, x_1=(x_0, -\lambda/2)$, then $f_{x_0,\lambda}=1$ on $\partial B_{\frac \lambda2}(x_1)$ and $\widetilde{E_2} (f_{x_0,\lambda})(\xi)$ is a harmonic function in  $B_{\frac \lambda2}(x_1)$.
Thus
$\widetilde{E_2} (f_{x_0,\lambda})(\xi)$ is a constant, in particular
$$
\widetilde{E_2} (f_{x_0,\lambda})(\xi)=\widetilde{E_2} (f_{x_0,\lambda})(\xi)|_{\xi=x_1}=n\omega_{n}.
$$
It follows that
\begin{equation*}
C_e(n,2, \frac{2(n-2)}{n})=\omega_{n}^{1-\frac1n-\frac{1}{2(n-1)}} n^{\frac{n-2}{2(n-1)}}.
\end{equation*}

\end{proof}

\medskip


 We remark here that for $\alpha \ne 2$, $\widetilde{E_\alpha} (1)$ may not be constant in the ball $B_{\frac \lambda2}(x_1)$, thus
solution $E_\alpha (f)(x)$ to \eqref{equ-1} may not be, up to a constant multiplier, in the form of
\begin{equation}\label{standard_buble}
(\frac {1}{|x'-x_0|^2+(x_n+\lambda)^2})^{\frac{n+\alpha-2}2}.
\end{equation}
For example, for $\alpha =4$ and $n\ge5$, let $ U_4(x)=\widetilde{E_\alpha} (1)$. Use the same notations as those in the proof of
Corollary \ref{ball-1}, then for $ x \in B_{\frac \lambda2}(x_1)$,
\begin{eqnarray*}
V(x):=\Delta_x U_4(x)=2(4-n)\int_{\partial B_{\frac \lambda2}(x_1)}\frac{1}{|x-z|^{n-2}}dz.
\end{eqnarray*}
 Clearly $V$ satisfies
\begin{equation}\label{PI-13}
\begin{cases}
\Delta_xV(x)=0,~~~~~~~~~~~&\text{in}~B_{\frac \lambda2}(x_1),\\
V=c_1,~~~~~~~~~~~&\text{on}~\partial B_{\frac \lambda2}(x_1).
\end{cases}
\end{equation} If follows that $V \equiv c_1$ in $B_{\frac \lambda2}(x_1)$.
It implies
\[U_4(x)=c_1+c_2|x-x_1|^2.
\]
for some constant $c_1$ and  $c_2\ne 0$. 
Thus for an extremal function $f(y)$ (so that $U_4(x)=f_{x_1, \lambda}$),
$$
E_4(f)(x)=c\int_{\partial \mathbb{R}^n_+}(\frac 1{|y-y_0|^2+\lambda^2})^{\frac{n+2}2} \cdot\frac{1}{|x-y|^{n-4}}dy,
$$
is not in the form of (\ref{standard_buble}).



\medskip

As a simple consequence, we show inequality (\ref{HLSD-3}) implies standard trace inequality with $p$-biharmonic operator.



\begin{crl}\label{Trace inequality}
For $n>2$ and $p\in(1,n/2),$ and $q=p(n-1)/(n-2p)$, there is a constant $C(p, n)>0,$ such that,  for all $f\in
W^{2,p}(\mathbb{R}^{n}_+),$
\begin{eqnarray*}
\|f\|_{L^q(\partial\mathbb{R}^{n}_+)}\le C(p,n)\|\Delta f \|_{L^p(\mathbb{R}^{n}_+)}.
\end{eqnarray*}
\end{crl}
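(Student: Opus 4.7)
The plan is to express $f|_{\partial\mathbb{R}^n_+}$ as a Newtonian potential of $-\Delta f$ and then invoke the sharp restriction estimate \eqref{HLSD-3} with $\alpha = 2$ and $t = p$. Substituting these parameters in \eqref{pq1} yields $\frac{1}{q} = \frac{n}{n-1}\bigl(\frac{1}{p} - \frac{2}{n}\bigr) = \frac{n-2p}{p(n-1)}$, i.e., $q = \frac{p(n-1)}{n-2p}$, exactly as in the corollary, and the admissible range $1<t<n/\alpha$ becomes $1<p<n/2$.

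By density I reduce to $f\in C_0^\infty(\overline{\mathbb{R}^n_+})$, which I view as a compactly supported smooth function on all of $\mathbb{R}^n$ after a suitable extension. The fundamental solution of $-\Delta$ then gives the pointwise representation
\begin{equation*}
f(y) = \frac{1}{n(n-2)\omega_n}\int_{\mathbb{R}^n}\frac{-\Delta f(x)}{|x-y|^{n-2}}\,dx, \qquad y \in \partial\mathbb{R}^n_+.
\end{equation*}
Splitting the integral over $\mathbb{R}^n_+$ and $\mathbb{R}^n_-$, and exploiting the identity $|x-y|=|x^*-y|$ for $y\in\partial\mathbb{R}^n_+$ (where $x^* = (x',-x_n)$ is the reflection across the boundary), a change of variable $x\mapsto x^*$ in the second piece converts it into an $R_2$-type integral on $\mathbb{R}^n_+$. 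Thus
\begin{equation*}
|f(y)| \le c_n\bigl(R_2(|\Delta f|)(y) + R_2(|\Delta f|\circ\sigma)(y)\bigr),
\end{equation*}
where $\sigma$ denotes reflection across the boundary and both densities lie in $L^p(\mathbb{R}^n_+)$ with norms controlled by $\|\Delta f\|_{L^p(\mathbb{R}^n_+)}$ once a reflection-symmetric extension is used. Applying \eqref{HLSD-3} to each summand then yields the desired bound $\|f\|_{L^q(\partial\mathbb{R}^n_+)} \le 2c_n C_r(n,2,p)\,\|\Delta f\|_{L^p(\mathbb{R}^n_+)}$.

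The principal obstacle I anticipate is to choose the extension of $f$ from $W^{2,p}(\mathbb{R}^n_+)$ to $\mathbb{R}^n$ so that $\|\Delta\tilde f\|_{L^p(\mathbb{R}^n)}$ is comparable to $\|\Delta f\|_{L^p(\mathbb{R}^n_+)}$ (rather than the full $W^{2,p}$-norm). Even reflection accomplishes this cleanly when $\partial_n f|_{\partial\mathbb{R}^n_+} = 0$; in general the natural framework is the homogeneous Sobolev completion of $C_0^\infty(\overline{\mathbb{R}^n_+})$ in the seminorm $\|\Delta\cdot\|_{L^p}$, which quotients out the harmonic null-functions and renders the scale-invariant trace estimate meaningful.
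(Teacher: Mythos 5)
Your reduction to the restriction inequality \eqref{HLSD-3} with $\alpha=2$, $t=p$ is exactly the paper's endgame, and the exponent bookkeeping is correct. But the representation step is where the two arguments diverge, and where yours has a genuine gap. The paper never extends $f$ to $\mathbb{R}^n$: its Lemma \ref{two formulas} obtains the intrinsic half-space identity
\begin{equation*}
f(x)=\frac{1}{C(n)}\int_{\mathbb{R}^n_+}\frac{\langle\nabla f(y),\,x-y\rangle}{|x-y|^{n}}\,dy
=\frac{1}{(2-n)C(n)}\int_{\mathbb{R}^n_+}\frac{\Delta f(y)}{|x-y|^{n-2}}\,dy,
\qquad C(n)=\tfrac{n\omega_n}{2},
\end{equation*}
for boundary points $x$, by writing $f(x)=-\int_0^\infty\frac{d}{dt}f(x+tz)\,dt$, averaging over directions $z$ in the upper hemisphere $\partial B_1^+(x)$ (polar coordinates centered at $x$ then sweep out exactly $\mathbb{R}^n_+$), and integrating by parts once. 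The constant is half the whole-space one, and no comparison between $\|\Delta\tilde f\|_{L^p(\mathbb{R}^n)}$ and $\|\Delta f\|_{L^p(\mathbb{R}^n_+)}$ is ever needed.

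Your route stalls precisely where you flag it, and the closing paragraph does not repair it. For general $f\in C_0^\infty(\overline{\mathbb{R}^n_+})$ the even reflection $\tilde f$ has a jump in $\partial_n\tilde f$ across $\{x_n=0\}$, so the distributional Laplacian $\Delta\tilde f$ contains the singular layer $2\,\partial_n f(\cdot,0)\otimes\delta_{\{x_n=0\}}$. The Newtonian potential of $-\Delta\tilde f$ therefore produces, in addition to your two $R_2$ terms, a single-layer potential of the Neumann data, which is not controlled by $\|\Delta f\|_{L^p(\mathbb{R}^n_+)}$; the claimed pointwise bound $|f(y)|\le c_n\bigl(R_2(|\Delta f|)(y)+R_2(|\Delta f|\circ\sigma)(y)\bigr)$ is simply false unless $\partial_n f$ vanishes on the boundary. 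Passing to the homogeneous completion under $\|\Delta\cdot\|_{L^p}$ cannot rescue this: that seminorm annihilates harmonic functions while the trace functional does not, so the inequality does not descend to the quotient (this is also why the corollary must be read on the completion of compactly supported smooth functions with the appropriate boundary behavior, not on all of $W^{2,p}(\mathbb{R}^n_+)$). To be fair, the paper's own passage from the gradient form to the Laplacian form silently discards the boundary term $\int_{\partial\mathbb{R}^n_+}|x-y|^{2-n}\,\partial_{y_n}f\,dS_y$ --- the same Neumann obstruction in different clothing --- but its first, gradient-form identity is unconditional, whereas your decomposition already fails at the level of the representation formula. To fix your argument you would either need to adopt the paper's hemisphere-averaging identity, or work with the Neumann Green's function of the half space (whose kernel on the boundary is $2$ times the whole-space one, recovering the factor $C(n)=n\omega_n/2$) rather than the whole-space fundamental solution applied to a reflected extension.
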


 To this end, we need the following representation formula.
\begin{lm} \label{two formulas} For  any $f\in C^\infty_0(\mathbb{R}^{n}_+)$ and $x \in \partial \mathbb{R}^{n}_+$,
\begin{eqnarray*}\label{Tr-1}
f(x)=\frac{1}{C(n)}\int_{\mathbb{R}^n_+}\frac{\langle\nabla f(y),x-y\rangle}{|x-y|^{n}}dy=\frac{1}{(2-n)C(n)}\int_{\mathbb{R}^n_+}\frac{\Delta f(y) }{|x-y|^{n-2}}dy,
\end{eqnarray*} where $C(n)=n\omega_{n}/2 $ is the half of the surface area of the unit sphere $S^n$.
\end{lm}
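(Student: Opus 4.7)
The plan is to prove the first equality by a direct polar-coordinate computation, and then deduce the second via integration by parts after recognizing that $(x-y)/|x-y|^n$ is the gradient of $|x-y|^{-(n-2)}/(n-2)$.

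For the first identity, I would center polar coordinates at the boundary point $x$, writing $y = x+r\omega$ with $r>0$ and $\omega \in S^{n-1}$. Since $x_n=0$, the condition $y \in \mathbb{R}^n_+$ reduces to $\omega_n>0$, so $\omega$ varies over the upper hemisphere $S^{n-1}_+$, whose $(n-1)$-dimensional measure is $\tfrac{n\omega_n}{2}=C(n)$. The identities $\langle \nabla f(y), x-y\rangle = -r\,\partial_r f(x+r\omega)$ and $dy = r^{n-1}\,dr\,d\omega$ collapse the integrand to $-\partial_r f(x+r\omega)$, whose radial integral from $0$ to $\infty$ equals $f(x)$ by the compact support of $f$. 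Integrating the constant $f(x)$ over $S^{n-1}_+$ then produces $C(n)\,f(x)$, proving the first equality.

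For the second identity, set $\psi(y) := |x-y|^{-(n-2)}$, so that $\frac{x-y}{|x-y|^n} = \tfrac{1}{n-2}\nabla_y\psi$ and the first integral rewrites as $\tfrac{1}{n-2}\int_{\mathbb{R}^n_+}\nabla f\cdot \nabla\psi\,dy$. I would excise a small upper half-ball $B_\epsilon^+(x)$ and apply Green's second identity on $\Omega_\epsilon := \mathbb{R}^n_+\setminus B_\epsilon^+(x)$, on which $\psi$ is harmonic, to get
\begin{equation*}
\int_{\Omega_\epsilon}\psi\,\Delta f\,dy = \int_{\partial\Omega_\epsilon}\bigl(\psi\,\partial_\nu f - f\,\partial_\nu \psi\bigr)\,dS.
\end{equation*}

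The main obstacle is verifying that the boundary terms behave correctly as $\epsilon\to 0$. On the flat piece $\partial\mathbb{R}^n_+\setminus B_\epsilon(x)$, the direct computation $\partial_{y_n}\psi = (n-2)(x_n-y_n)/|x-y|^n \equiv 0$ (since $x_n=y_n=0$) kills the $f\,\partial_\nu\psi$ piece, while the $\psi\,\partial_\nu f$ piece vanishes because $f \in C_0^\infty(\mathbb{R}^n_+)$ has support bounded away from $\partial\mathbb{R}^n_+$. On the inner hemisphere (with outward normal $\nu = (x-y)/|x-y|$ pointing toward $x$), one finds $\partial_\nu\psi = (n-2)/\epsilon^{n-1}$, and the area is $C(n)\epsilon^{n-1}$; continuity of $f$ at $x$ then yields $\int f\,\partial_\nu\psi\,dS \to (n-2)\,C(n)\,f(x)$, while $\int \psi\,\partial_\nu f\,dS = O(\epsilon)\to 0$. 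Passing to the limit gives $\int_{\mathbb{R}^n_+}\psi\,\Delta f\,dy = -(n-2)\,C(n)\,f(x) = (2-n)\,C(n)\,f(x)$, which yields the second identity after dividing by $(2-n)\,C(n)$. The careful accounting of signs and of the two distinct boundary contributions is the only real subtlety; everything else is routine.
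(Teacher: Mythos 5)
Your proof of the first equality is the paper's own argument in different clothing: polar coordinates centered at $x$, the identity $\langle\nabla f(x+tz),z\rangle=\tfrac{d}{dt}f(x+tz)$ integrated along rays, and the fact that the upper unit hemisphere has measure $C(n)=n\omega_n/2$. That part is correct. For the second equality the paper writes $\tfrac{x-y}{|x-y|^n}=\tfrac{1}{n-2}\nabla_y|x-y|^{2-n}$ and integrates by parts in one line, recording no boundary contributions at all; your excision of $B_\epsilon^+(x)$ and use of Green's second identity is in principle the more careful route, and your hemisphere computation (including the signs) is right.

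The genuine gap is your disposal of the flat boundary term $\int_{\partial\mathbb{R}^n_+\setminus B_\epsilon(x)}\psi\,\partial_\nu f\,dS$. You kill it by asserting that the support of $f$ is bounded away from $\partial\mathbb{R}^n_+$. If that were literally true, then $f$ would vanish identically in a neighborhood of $x\in\partial\mathbb{R}^n_+$, the hemisphere term you evaluate ``by continuity of $f$ at $x$'' would be exactly $0$, and the lemma would reduce to the vacuous identity $0=0=0$ --- useless for the trace inequality (Corollary \ref{Trace inequality}) it is meant to feed, which needs $f$ with nontrivial boundary values. Under the reading that gives the lemma content ($f$ smooth up to the boundary with compact support in $\overline{\mathbb{R}^n_+}$), your own Green's-identity bookkeeping, done honestly, yields
\[
\int_{\mathbb{R}^n_+}\frac{\Delta f(y)}{|x-y|^{n-2}}\,dy=(2-n)\,C(n)\,f(x)+\int_{\partial\mathbb{R}^n_+}\frac{\partial_\nu f(y)}{|x-y|^{n-2}}\,dS_y ,
\]
where $\partial_\nu f=-\partial_{y_n}f$ on the flat part; the Neumann term on the right does not vanish for general $f$ (only the term $f\,\partial_\nu\psi$ dies there, since $\partial_{y_n}|x-y|^{2-n}=0$ when $x_n=y_n=0$). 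So the second identity requires either $\partial_{y_n}f=0$ on $\partial\mathbb{R}^n_+$ or the vacuous support hypothesis, and your argument cannot invoke both readings at once. To be fair, the paper's one-line integration by parts silently drops the same term; but as written, your step ``the $\psi\,\partial_\nu f$ piece vanishes'' is the missing (and in general false) link, not a routine verification.
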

\begin{proof}
Let $z\in \partial B^+_1(x)$ and  write
\begin{eqnarray*}
f(x)=-\int^\infty_0\frac{d}{dt}f(x+tz)dt=-\int^\infty_0\langle\nabla f(x+tz),z\rangle  dt.
\end{eqnarray*}
Integrating both sides on $\partial B^+_1(x)$ with respect to $z$ variable, we have
\begin{eqnarray*}
C(n)f(x)&=&-\int_{\partial B_1^+(x)}\int^\infty_0\langle\nabla f(x+tz),z\rangle  dt dS_{z} \\
&=&\int_{\mathbb{R}^n_+}\frac{\langle\nabla_{y} f(y),x-y\rangle}{|x-y|^n}dy.
\end{eqnarray*}


Further, using integrating by part, we have
\begin{eqnarray*}
\int_{\mathbb{R}^n_+}\frac{\langle\nabla_{y} f(y),x-y\rangle}{|x-y|^n}dy
&=&\frac1{n-2}\int_{\mathbb{R}^n_+}\langle\nabla_{y} f(y),\nabla_{y}(|x-y|^{2-n}) \rangle  dy\\
&=&-\frac1{n-2}\int_{\mathbb{R}^n_+}\frac{\Delta f(y)}{ |x-y|^{n-2}}dy.
\end{eqnarray*}
\end{proof}

 {\bf Proof of corollary \ref{Trace inequality}.}
From Lemma \ref{two formulas}, we have
for any $f\in C^\infty_0(\mathbb{R}^{n}_+)$,
\begin{eqnarray*}
 |f(x)|\le\frac{1}{(n- 2)C(n)}\int_{\mathbb{R}^n_+}\frac{|\Delta f(y)| }{|x-y|^{n-2}}dy.
\end{eqnarray*}
Let $\frac1{q}=\frac n{n-1}\big(\frac1{p}-\frac 2n\big)$.   It follows from the above and \eqref{HLSD-3} that
\begin{eqnarray*}
\|f\|_{L^{q}(\partial\mathbb{R}^{n}_+)}\le C(p,n)\|\Delta f\|_{L^{p}(\mathbb{R}^{n}_+)}.
\end{eqnarray*}

\subsection{Inequality in limit case $\alpha=n$}\label{subsect 5.2}
Inequality \eqref{B-1} is equivalent to the following inequality
\begin{equation}\label{HLSD-B}
\int_{B_1}\int_{\partial B_1}\frac{G(\xi) F(\eta)}{|\xi-\eta|^{n-\alpha}}dS_\eta d\xi\le C_e (n,\alpha,p)\|F\|_{L^p(\partial B_1)}\|G\|_{L^t(B_1)},
\end{equation}  with $p=\frac{2(n-1)}{n+\alpha-2}$ and $ t=\frac{2n}{n+\alpha}$.

Standard limiting argument for $\alpha \to n^-$ (see, for example, Beckner \cite{B1993})
and tedious computations yield
\begin{crl}\label{folklore}
Assume that $F$ and $G$ are nonnegative $L\ln L$  functions on $\partial B_1$ and $B_1$, respectively, with $\int_{\partial B_1}F(\eta)dS_\eta=\int_{ B_1}G(\xi)d\xi=1.$ Then
\begin{eqnarray*}
& &-2n\omega_n \int_{B_1}\int_{\partial B_1} G(\xi) \ln{|\xi-\eta|}F(\eta)dS_\eta d\xi\nonumber\\
&\le&
\frac1{n}\int_{B_1}G(\xi)\ln(G(\xi))d\xi+\frac1{n-1}\int_{\partial B_1}F(\eta)\ln(F(\eta))dS_\eta+C_n,
\end{eqnarray*}
where $C_n=\frac{\ln(n\omega_n)}{n-1}+\frac{1}{n}\ln \int_{B_1}e^{I_n(\xi)}d\xi$  and $I_n(\xi)=-2\omega_n^{-1}\int_{\partial B_1}\ln |\xi-\eta|dS_\eta.$

Equivalently, assume that $f(y)$ and $g(x)$ are nonnegative $L\ln L$  functions on $\partial\mathbb{R}^n_+$ and $\mathbb{R}^n_+$, respectively, with $\int_{\partial \mathbb{R}^n_+}f(y)d y=\int_{ \mathbb{R}^n_+}g(x)dx=1.$ Then
\begin{eqnarray}\label{log-H-2}
& &-2n\omega_n \int_{\mathbb{R}^n_+}\int_{\partial \mathbb{R}^n_+} g(x) \ln{|x-y|}f(y)dy dx\nonumber\\
&\le&\frac1{n}\int_{\mathbb{R}^n_+}g(x)\ln(g(x))dx
+\frac1{n-1}\int_{\partial \mathbb{R}^n_+}f(y)\ln(f(y))dy+C_n.
\end{eqnarray}
\end{crl}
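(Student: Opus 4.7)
The plan is to derive the stated logarithmic inequality as the limit $\alpha\to n^-$ of the sharp inequality \eqref{HLSD-B} on the unit ball $B_1$, following the Beckner-type argument in \cite{B1993}. Set $\epsilon = n-\alpha$ and normalize so that $\int_{\partial B_1} F\,dS = \int_{B_1} G\,d\xi = 1$. Then $p = 2(n-1)/(n+\alpha-2)\to 1$ and $t = 2n/(n+\alpha)\to 1$, with expansions $p-1 = \epsilon/(2(n-1)) + O(\epsilon^2)$ and $t-1 = \epsilon/(2n) + O(\epsilon^2)$. As $\epsilon\to 0$, both sides of \eqref{HLSD-B} tend to $1$, so the content of the inequality is encoded in the subleading terms of order $\epsilon$.

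First I would Taylor-expand the two sides. Using $|\xi-\eta|^{-\epsilon} = 1 - \epsilon\ln|\xi-\eta| + O(\epsilon^2)$ together with the normalization and Fubini, one obtains
\[
\text{LHS} = 1 - \epsilon\int_{B_1}\int_{\partial B_1} G(\xi)F(\eta)\ln|\xi-\eta|\,dS_\eta d\xi + O(\epsilon^2).
\]
The $L\ln L$ hypothesis legitimizes $\int F^p\,dS = 1 + (p-1)\int F\ln F\,dS + O((p-1)^2)$, and hence $\|F\|_{L^p(\partial B_1)} = 1 + (p-1)\int F\ln F + O((p-1)^2)$, and similarly for $\|G\|_{L^t(B_1)}$.

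The crucial step is the asymptotic expansion of the sharp constant. Starting from the explicit formula in Corollary \ref{ball-1} (invariant under translation, so valid on $B_1=B_1(0)$), one computes
\[
\int_{\partial B_1}\frac{dS_z}{|\xi-z|^{\epsilon}} = n\omega_n + \tfrac{\epsilon\omega_n}{2}I_n(\xi) + O(\epsilon^2) = n\omega_n\Bigl(1 + \tfrac{\epsilon}{2n}I_n(\xi) + O(\epsilon^2)\Bigr),
\]
with $I_n$ exactly as defined in the statement. Raising to the power $2n/\epsilon$ produces $(n\omega_n)^{2n/\epsilon} e^{I_n(\xi)}(1+O(\epsilon))$; integrating over $B_1$ and then raising to the power $\epsilon/(2n)$ gives a factor
\[
n\omega_n\Bigl(1 + \tfrac{\epsilon}{2n}\ln\int_{B_1} e^{I_n(\xi)}d\xi + O(\epsilon^2)\Bigr).
\]
Combined with the prefactor $(n\omega_n)^{-(n+\alpha-2)/(2(n-1))} = (n\omega_n)^{-1}\bigl(1 + \tfrac{\epsilon}{2(n-1)}\ln(n\omega_n) + O(\epsilon^2)\bigr)$, this yields
\[
C_e(n,\alpha,p) = 1 + \epsilon\Bigl[\tfrac{\ln(n\omega_n)}{2(n-1)} + \tfrac{1}{2n}\ln\int_{B_1} e^{I_n}\Bigr] + O(\epsilon^2).
\]

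Matching the $O(\epsilon)$ coefficients in $\text{LHS} \le C_e\|F\|_p\|G\|_t$, dividing by $\epsilon$ and sending $\epsilon\to 0$, then multiplying through by the factor that converts $-\int\!\int GF\ln|\xi-\eta|$ into its scaled form, produces the first inequality in the corollary with $C_n = \tfrac{\ln(n\omega_n)}{n-1} + \tfrac{1}{n}\ln\int_{B_1} e^{I_n}$ identified exactly. The equivalent version \eqref{log-H-2} on the upper half space then follows by the conformal change of variables $y\mapsto\eta^{x_0,\lambda}$ used in the proof of Corollary \ref{ball-1}: under this pullback, the unit-mass normalizations on $\partial\mathbb{R}^n_+$ and $\mathbb{R}^n_+$ correspond to unit-mass densities on $\partial B_1$ and $B_1$, and the logarithm of $|x-y|$ picks up Jacobian terms that are absorbed into the log-densities and the additive constant. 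The main obstacle is controlling the remainders in the expansion of $C_e$: both operations (raising to the power $2n/\epsilon$ and then to $\epsilon/(2n)$) are singular in $\epsilon$, so one must justify that the $O(\epsilon^2)$ bounds on the integrand survive after these operations. This is the ``tedious computation'' acknowledged in the paper and the point at which the bounded continuous behavior of $I_n$ on the closed ball (and the finiteness of $\int_{B_1} e^{I_n}$) plays an essential role.
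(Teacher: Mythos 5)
Your strategy is exactly the one the paper has in mind (it gives no proof beyond ``standard limiting argument for $\alpha\to n^-$ \dots and tedious computations''), and the expansions you write down are correct: $p-1=\epsilon/(2(n-1))+O(\epsilon^2)$, $t-1=\epsilon/(2n)+O(\epsilon^2)$, the first-order expansion of the left-hand side of \eqref{HLSD-B}, the expansion $\|F\|_{L^p}=1+(p-1)\int F\ln F+O((p-1)^2)$ under the $L\ln L$ hypothesis, and the asymptotics
$C_e(n,\alpha,p)=1+\epsilon\bigl[\tfrac{\ln(n\omega_n)}{2(n-1)}+\tfrac1{2n}\ln\int_{B_1}e^{I_n}\bigr]+O(\epsilon^2)$,
whose derivation from Corollary \ref{ball-1} (using $\int_{\partial B_1}\ln|\xi-z|\,dS_z=-\tfrac{\omega_n}{2}I_n(\xi)$ and the boundedness of $I_n$ on $\overline{B_1}$) is right. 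The uniformity of the remainders that you flag is indeed the only analytic content of the ``tedious computation.''

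The one place the argument does not close as written is the final matching. Dividing by $\epsilon$ and letting $\epsilon\to0$ gives
$-\int_{B_1}\int_{\partial B_1}GF\ln|\xi-\eta|\le \tfrac1{2(n-1)}\int F\ln F+\tfrac1{2n}\int G\ln G+\tfrac12 C_n$,
so after multiplying by $2$ you obtain precisely the stated right-hand side but with $-2\int\!\int GF\ln|\xi-\eta|$, not $-2n\omega_n\int\!\int$, on the left. There is no further ``factor that converts the double integral into its scaled form'': multiplying both sides by $n\omega_n$ would also rescale $\tfrac1n\int G\ln G$, $\tfrac1{n-1}\int F\ln F$ and $C_n$, destroying the match. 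A consistency check at the expected extremal pair $F\equiv(n\omega_n)^{-1}$, $G=e^{I_n}/\int_{B_1}e^{I_n}$ shows that with the coefficient $2$ both sides equal $\tfrac1n\int GI_n$ (this is exactly the Gibbs variational identity $\int G\psi=\int G\ln G+\ln\int e^{\psi}$ at $G=e^{\psi}/\int e^{\psi}$), whereas with $2n\omega_n$ the left side becomes $\omega_n\int GI_n$ and equality fails. So either the prefactor $2n\omega_n$ in the corollary is a misprint for $2$ under the stated normalization, or your last step has a genuine gap; in either case you must carry out the matching explicitly rather than appeal to an unspecified rescaling. The transfer to the half-space version via the conformal change of variables of Corollary \ref{ball-1} is fine once the ball version is settled.
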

%

\subsection{Fractional Lapalacian operators}
Let
\begin{equation}\label{1-8_au}
u(x)=\frac 1{c(n,\alpha)} \int_{\partial \mathbb{R}^{n}_+}\frac{ f(y)}{|x-y|^{n-\alpha}}dy.
\end{equation}
Direct computation shows that $u(x)$  satisfies the following equation for $\alpha= 2m$.

\begin{prop} \label{prop1-1}  Assume that $f(y)\in C^\infty(\mathbb{R}^{n-1})$. If
$u(x)$ satisfies \eqref{1-8_au} for $\alpha=2m$ ($m\in \mathbb{Z})$, then
\begin{equation}\label{1-8}
\begin{cases}
(-\Delta)^{\frac{\alpha}2} u=0 ~&\mbox{for\ } x_n>0 ~ \mbox{and\ }
x' \in \mathbb{R}^{n-1},\\
\frac{\partial }{\partial x_n}[(-\Delta)^{k}u]=0,\frac{\partial}{\partial
x_n}[(-\Delta)^{m-1}u]=(-1)^{m}f(x'),~&x' \in \mathbb{R}^{n-1}, 
\end{cases}
\end{equation}where $k=0,1,2,3,\cdots,m-2.$
\end{prop}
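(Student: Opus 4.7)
\textbf{Proof plan for Proposition \ref{prop1-1}.}

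The key computational tool will be the elementary identity
\[
-\Delta_x\bigl(|x-y|^{-(n-\alpha)}\bigr)\;=\;(n-\alpha)(\alpha-2)\,|x-y|^{-(n-\alpha+2)}, \qquad x\ne y,
\]
obtained by writing the Laplacian of a radial function as $g''(r)+\tfrac{n-1}{r}g'(r)$ with $r=|x-y|$. I plan to apply this identity iteratively. With $\alpha=2m$, one produces
\[
(-\Delta)^k\,|x-y|^{-(n-2m)}\;=\;A_k\,|x-y|^{-(n-(2m-2k))},
\]
where $A_k=\prod_{j=0}^{k-1}(n-2m+2j)\,\prod_{j=1}^{k}(2m-2j)$. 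Since the factor $(2m-2m)=0$ appears when $k=m$, this immediately gives $(-\Delta)^m u=0$ for $x_n>0$ (the integrand against $f(y)$ being pointwise zero away from the boundary, and differentiation under the integral sign justified because $y\in\partial\mathbb{R}^n_+$ keeps us off the singular set). For $0\le k\le m-1$ we obtain the intermediate representation $(-\Delta)^k u(x) = \frac{A_k}{c(n,2m)} \int_{\partial\mathbb R^n_+} f(y)\,|x-y|^{-(n-(2m-2k))}\,dy$.

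Next I would address the boundary conditions. Differentiating the kernel in $x_n$ produces
\[
\frac{\partial}{\partial x_n}|x-y|^{-(n-\alpha)} \;=\; -(n-\alpha)\,x_n\,|x-y|^{-(n-\alpha+2)},
\]
so the normal derivative of $\int f(y)|x-y|^{-(n-\alpha)}dy$ at height $x_n$ is $-(n-\alpha)\, x_n\int f(y)(|x'-y|^2+x_n^2)^{-(n-\alpha+2)/2}dy$. For $0\le k\le m-2$ the relevant exponent of $|x-y|^{-1}$ is $n-(2m-2k)+2\le n-2<n$, so the integral converges uniformly as $x_n\to 0^+$, and the prefactor $x_n$ forces the limit to be zero. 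This yields $\tfrac{\partial}{\partial x_n}[(-\Delta)^k u]=0$ on $\partial\mathbb{R}^n_+$ for $k=0,1,\dots,m-2$.

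The only nontrivial boundary condition is at $k=m-1$, where $(-\Delta)^{m-1}u$ is a multiple of $\int f(y)|x-y|^{-(n-2)}dy$; its normal derivative involves the singular limit
\[
\lim_{x_n\to 0^+}\, x_n\int_{\partial\mathbb{R}^n_+} f(y)\,(|x'-y|^2+x_n^2)^{-n/2}\,dy \;=\; \frac{n\omega_n}{2}\,f(x'),
\]
which is the standard Poisson-kernel approximation of the identity (recall that $\int_{\mathbb{R}^{n-1}}(1+|z|^2)^{-n/2}dz=n\omega_n/2$). Combining the constants: one multiplies $A_{m-1}=\prod_{j=0}^{m-2}(n-2m+2j)\prod_{j=1}^{m-1}(2m-2j)$ by $-(n-2)\cdot\tfrac{n\omega_n}{2}$ and divides by $c(n,2m)=\pi^{n/2}2^{2m}\Gamma(m)/\Gamma((n-2m)/2)$. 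The product $\prod_{j=1}^{m-1}(2m-2j)=2^{m-1}(m-1)!$, and the telescoping $\prod_{j=0}^{m-2}(n-2m+2j)\cdot(n-2)=2^{m-1}\Gamma((n-2)/2+1)/\Gamma((n-2m)/2)\cdot(\text{adjustments})$, together with $n\omega_n=2\pi^{n/2}/\Gamma(n/2)$, should telescope into $(-1)^m$ after accounting for the sign $(-1)$ from $\partial_{x_n}$ and the signs accumulated through $(-\Delta)^{m-1}$.

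The main obstacle is thus purely bookkeeping: showing that the product of the iterative Laplacian constants $A_{m-1}$, the Poisson-kernel constant $n\omega_n/2$, the differentiation sign $-(n-2)$, and the Riesz normalization $1/c(n,2m)$ collapses to exactly $(-1)^m$. This reduces to a concrete identity among Gamma functions that can be verified by induction on $m$, using $\Gamma(z+1)=z\Gamma(z)$ and the duplication formula; aside from this computation, the structural claims (polyharmonicity and the intermediate Neumann conditions) are immediate from the two displayed identities above.
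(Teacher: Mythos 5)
Your strategy --- iterate the radial Laplacian identity on the kernel, observe that the $m$-th iterate kills it, check that the intermediate normal derivatives vanish because of the prefactor $x_n$, and evaluate the last one by a Poisson-kernel limit --- is exactly the ``direct computation'' the paper invokes without writing out, and its structural parts are all correct: your $A_k$ is right, $A_m=0$ gives the polyharmonicity, the uniform-convergence argument for $k\le m-2$ is sound, and $\lim_{x_n\to0^+}x_n\int f(y)|x-y|^{-n}\,dy=\tfrac{n\omega_n}{2}f(x')=\tfrac{\pi^{n/2}}{\Gamma(n/2)}f(x')$ is the correct normalization of the Poisson kernel.

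The step you defer as ``purely bookkeeping,'' however, does not close: the constants do not collapse to $(-1)^m$. From your formula,
\[
A_{m-1}=2^{m-1}\Gamma(m)\cdot 2^{m-1}\frac{\Gamma(\frac n2-1)}{\Gamma(\frac n2-m)}
=2^{2m-2}\,\Gamma(m)\,\frac{\Gamma(\frac n2-1)}{\Gamma(\frac n2-m)},
\]
so that, with $c(n,2m)=\pi^{n/2}2^{2m}\Gamma(m)/\Gamma(\frac n2-m)$,
\[
\frac{A_{m-1}}{c(n,2m)}=\frac{\Gamma(\frac n2-1)}{4\pi^{n/2}},
\]
and multiplying by $-(n-2)\cdot\frac{n\omega_n}{2}=-(n-2)\frac{\pi^{n/2}}{\Gamma(n/2)}$ and using $\Gamma(\tfrac n2)=(\tfrac n2-1)\Gamma(\tfrac n2-1)$ yields
\[
\frac{\partial}{\partial x_n}\big[(-\Delta)^{m-1}u\big](x',0)=-\tfrac12\,f(x'),
\]
independently of $m$. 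Two concrete consequences. First, every factor of $A_{m-1}$ is positive when $n>2m$, so no $m$-dependent sign can ``accumulate through $(-\Delta)^{m-1}$'' as you suggest; the only minus sign in the whole computation is the single one from $\partial_{x_n}$, so the parity $(-1)^m$ cannot be produced by this route (for $m=2$ the statement asks for $+1$, while your kernel computation necessarily gives a negative constant). Second, the proposed induction fails already at $m=1$, where one gets $-\tfrac12 f$ rather than $-f$; the missing $2$ is exactly the factor by which the whole-space Green's constant $c(n,2)$ differs from the half-space Neumann Green's constant. So either the normalization in \eqref{1-8_au} must be changed, or your argument proves \eqref{1-8} only with $-\tfrac12 f(x')$ in place of $(-1)^m f(x')$; as written, the proposal asserts a final identity that the computation it outlines does not deliver.
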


Note that
\begin{equation}\label{equivalent}
c(n-1,\alpha-1)^{-1}E_\alpha f(x',0) =I_\alpha f(x').
\end{equation}
As a simple consequence, we know that the global defined equation
$$
(-\Delta)^{\frac{\beta}2} u(x)=f(x),\quad x\in \mathbb{R}^{n-1}
$$
is equivalent to a pointwise defined equation \eqref{1-8}, where $\beta=\alpha-1.$ In particular,
for $\beta=1$ (i.e. $\alpha=2$), the above equivalent relation is a well known fact, and was pointed out in, e.g. Caffarelli and Silverster \cite{CS2007}, and Cabre and Tan
\cite{CT2010}, and others.

\subsection{Nonexistence of positive solutions to the integral system with subcritical powers}

As a byproduct of using  method of moving sphere, we will show system \eqref{sys-1} with subcritical powers:
\begin{equation}\label{subcritical}
1<\theta\le\frac{n+\alpha-2}{n-\alpha}, \, \, \, 1<\kappa\le\frac{n+\alpha}{n-\alpha},  \, \, \mbox{and} \, \, \, \theta+\kappa<\frac{2n+2\alpha-2}{n-\alpha} \end{equation}
has only a pair of trivial non-negative solutions.
Note that $\theta+\kappa=\frac{2n+2\alpha-2}{n-\alpha}$ is equivalent to the critical case
\[\frac{1}{ \theta+1 }+\frac{n }{(\kappa+1)( n-1)}=\frac{n-\alpha}{n-1},
\]
which is a necessary condition for inequality \eqref{HLSD-2}.
\begin{thm}\label{sys-nonexistence}
Let $1<\alpha<n.$ Suppose $(u,v)\in L_{loc}^{\frac{(n-1)(\theta-1)}{\alpha-1}}(\partial\mathbb{R}^{n}_+)\times L_{loc}^{\frac{n(\kappa-1)}{\alpha}}(\overline{\mathbb{R}^{n}_+})$ is a pair of
non-negative solutions to system \eqref{sys-1} with $\theta, \  \kappa$ satisfying (\ref{subcritical}). Then $u=v=0$.
\end{thm}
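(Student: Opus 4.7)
The plan is to extend the moving-sphere machinery of Section \ref{Section 3} to the subcritical range. The crucial observation is that the subcriticality assumption $\theta+\kappa<(2n+2\alpha-2)/(n-\alpha)$ is equivalent to $\tau_1+\tau_2>0$, where $\tau_1=n+\alpha-\kappa(n-\alpha)$ and $\tau_2=n+\alpha-2-\theta(n-\alpha)$ are the weight exponents from Lemma \ref{Kelvin formula}; under \eqref{subcritical}, both are $\ge 0$ and at least one is strictly positive. Because $\lambda\le|\eta-x|$ on $\Sigma_{x,\lambda}^n$ and $\lambda\le|\xi-x|$ on $\Sigma_{x,\lambda}^{n-1}$, the weight factors $(\lambda/|\cdot-x|)^{\tau_i}$ appearing in \eqref{K-3}--\eqref{K-4} never exceed $1$, so the ``start of moving sphere'' step (the analog of Lemma \ref{K-lm-1}) carries over: the mean value theorem together with \eqref{HLSD-2}, \eqref{HLSD-3} and a H\"older estimate placing $v^{\kappa-1}$ in $L^{n/\alpha}$ and $u^{\theta-1}$ in $L^{(n-1)/(\alpha-1)}$ yields self-improving inequalities of the form \eqref{K-7}--\eqref{K-8}, with the integrability norms of $v,u$ now taken in $L^{n(\kappa-1)/\alpha}(B_\lambda^+(x))$ and $L^{(n-1)(\theta-1)/(\alpha-1)}(B_\lambda^{n-1}(x))$. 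The hypothesized local integrability makes these norms vanish as $\lambda\to 0^+$, giving $u_{x,\lambda}\le u$ and $v_{x,\lambda}\le v$ on the outer regions for every small $\lambda$.

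Next I would prove that $\bar\lambda(x)=\infty$ at every $x\in\partial\mathbb{R}^n_+$. Suppose otherwise and set $\bar\lambda:=\bar\lambda(x_0)<\infty$. If $(u,v)$ is nontrivial, then $u,v>0$ everywhere in their domains by direct inspection of \eqref{sys-1}, and $v_{x_0,\bar\lambda}>0$ on $\Sigma_{x_0,\bar\lambda}^n$. Rewriting \eqref{K-3} at $\lambda=\bar\lambda$ in the form
\begin{eqnarray*}
u(\xi)-u_{x_0,\bar\lambda}(\xi)=\int_{\Sigma_{x_0,\bar\lambda}^n}P(x_0,\bar\lambda;\xi,\eta)\Big\{\bigl[1-(\bar\lambda/|\eta-x_0|)^{\tau_1}\bigr]v_{x_0,\bar\lambda}^\kappa(\eta)+\bigl[v^\kappa(\eta)-v_{x_0,\bar\lambda}^\kappa(\eta)\bigr]\Big\}d\eta,
\end{eqnarray*}
both bracketed terms are pointwise $\ge 0$. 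If $\tau_1>0$, the first bracket is strictly positive in the interior of $\Sigma_{x_0,\bar\lambda}^n$, forcing $u(\xi)>u_{x_0,\bar\lambda}(\xi)$ strictly on $\Sigma_{x_0,\bar\lambda}^{n-1}$; if instead $\tau_1=0$, then $\tau_2>0$, the symmetric identity \eqref{K-4} yields strict inequality for $v$ on $\Sigma_{x_0,\bar\lambda}^n$, and feeding this back through the second bracket above produces the same strict inequality for $u$. Either way, I obtain strict $u_{x_0,\bar\lambda}<u$ and $v_{x_0,\bar\lambda}<v$ on the outer regions, and the perturbation argument of Lemma \ref{K-lm-2}, combined with the smallness of the $L^{n(\kappa-1)/\alpha}$ and $L^{(n-1)(\theta-1)/(\alpha-1)}$ norms on a narrow transition collar, lets me push past $\bar\lambda$, contradicting its definition.

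With $\bar\lambda(x)=\infty$ for every $x\in\partial\mathbb{R}^n_+$, Lemma \ref{Li-1} applied on $\mathbb{R}^{n-1}$ with $\mu=n-\alpha$ forces $u\equiv C_0$, while Lemma \ref{LZ-2} forces $v(z',t)=v(0,t)$. Substituting $u\equiv C_0$ into the second equation of \eqref{sys-1} and passing to polar coordinates gives
\begin{eqnarray*}
v(0,t)=C_0^\theta\int_{\mathbb{R}^{n-1}}\frac{dy}{(|y|^2+t^2)^{(n-\alpha)/2}}=C\,C_0^\theta\,t^{\alpha-1}\int_0^\infty\frac{\rho^{n-2}\,d\rho}{(\rho^2+1)^{(n-\alpha)/2}},
\end{eqnarray*}
and since $\alpha>1$ the last integral diverges at infinity; this forces $C_0=0$, hence $u\equiv 0$ and then $v\equiv 0$ from its defining integral.

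The main obstacle I anticipate is making the passage from ``$\bar\lambda<\infty$'' to the strict comparison fully rigorous in the low-regularity setting $(u,v)\in L^p_{loc}$: one has to extract the strictness at the level of measures rather than pointwise values, and combine it with a Brezis--Lieb type decomposition on the transition collar, exactly as in the critical case of Lemma \ref{K-lm-2}, so that the gain supplied by $\tau_1+\tau_2>0$ is preserved through the extension step; a secondary bookkeeping issue is to verify that the Kelvin-transformed norm bounds are quantitative in $\lambda$ so that the $\varepsilon$-smallness condition from the starting step can be reused uniformly in the extension step.
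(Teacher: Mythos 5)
Your proposal follows essentially the same route as the paper: the same starting lemma for small $\lambda$ using $\tau_1,\tau_2\ge 0$, the mean value theorem, the HLS inequalities \eqref{HLSD-2}--\eqref{HLSD-3} with the $L^{n(\kappa-1)/\alpha}$ and $L^{(n-1)(\theta-1)/(\alpha-1)}$ norms; the same contradiction argument showing $\bar\lambda(x)=\infty$ via the strict inequality supplied by $\tau_1+\tau_2>0$; and the same conclusion via Lemmas \ref{Li-1} and \ref{LZ-2} and the divergence of $\int_{\mathbb{R}^{n-1}}(|y|^2+t^2)^{-(n-\alpha)/2}dy$ for $\alpha>1$. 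The argument is correct and matches the paper's proof (Lemmas \ref{K-lm-no-1} and \ref{K-lm-no-2}), with your treatment of the strictness step merely spelling out in more detail what the paper states tersely.
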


Note  (\ref{subcritical}) is equivalent to
\[
\frac{2(n-1)}{n+\alpha-2}\le p<2,  \, 2<q\le \frac{2n}{n-\alpha}, \, \, \mbox{and} \, \, \frac{1}{p}-\frac{n}{q (n-1)}<\frac{\alpha-1}n .\]
Theorem \ref{sys-nonexistence} yields the proof for Theorem \ref{Nonexistence}.

To prove the theorem, we first show

\begin{lm}\label{K-lm-no-1} Under the same  assumptions in Theorem \ref{sys-nonexistence},
if $(u,v)$ is  a pair of nonnegative solutions to system \eqref{sys-1}, then for any  $x\in\partial\mathbb{R}^{n}_+$, there exists
$\lambda_0(x)>0$ such that : $0<\lambda<\lambda_0(x),$
\begin{eqnarray*}
u_{x,\lambda}(\xi)&\le& u(\xi),\quad \forall\, \xi\in\Sigma_{x,\lambda}^{n-1},\\
v_{x,\lambda}(\eta)&\le& v(\eta),\quad\forall\,\eta\in\Sigma_{x,\lambda}^n.
\end{eqnarray*}
\end{lm}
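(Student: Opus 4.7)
The plan is to run the same argument as Lemma \ref{K-lm-1}, now exploiting the fact that in the subcritical regime $\tau_1 = n+\alpha-\kappa(n-\alpha)\ge 0$ and $\tau_2 = n+\alpha-2-\theta(n-\alpha)\ge 0$ (with at least one strictly positive by the assumption $\theta+\kappa<(2n+2\alpha-2)/(n-\alpha)$), so that the extra factors in identities \eqref{K-3} and \eqref{K-4} help rather than hurt. Since $|\eta-x|\ge\lambda$ on $\Sigma^n_{x,\lambda}$, one has $(\lambda/|\eta-x|)^{\tau_1}\le 1$, and likewise $(\lambda/|\xi-x|)^{\tau_2}\le 1$ on $\Sigma^{n-1}_{x,\lambda}$. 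Discarding the positive kernel $P$ and invoking the mean value inequality yields, on $\Sigma_{x,\lambda}^u$ and $\Sigma_{x,\lambda}^v$ respectively,
\begin{eqnarray*}
0\le u_{x,\lambda}(\xi)-u(\xi)&\le&\kappa\int_{\Sigma_{x,\lambda}^v}\frac{v_{x,\lambda}^{\kappa-1}(v_{x,\lambda}-v)}{|\xi-\eta|^{n-\alpha}}\,d\eta,\\
0\le v_{x,\lambda}(\eta)-v(\eta)&\le&\theta\int_{\Sigma_{x,\lambda}^u}\frac{u_{x,\lambda}^{\theta-1}(u_{x,\lambda}-u)}{|\eta-\xi|^{n-\alpha}}\,d\xi.
\end{eqnarray*}

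Next, I would apply the sharp HLS inequalities \eqref{HLSD-2} and \eqref{HLSD-3} followed by H\"older, choosing the H\"older factors so that $v_{x,\lambda}^{\kappa-1}$ is measured in $L^{n/\alpha}$ (equivalently $v_{x,\lambda}\in L^{n(\kappa-1)/\alpha}$) and $u_{x,\lambda}^{\theta-1}$ in $L^{(n-1)/(\alpha-1)}$ (equivalently $u_{x,\lambda}\in L^{(n-1)(\theta-1)/(\alpha-1)}$). These are precisely the exponents furnished by the local hypotheses on $u$ and $v$, and one checks that the HLS exponents close exactly as in the proof of Lemma \ref{K-lm-1}. Chaining the two bounds yields
\[
\|u_{x,\lambda}-u\|_{L^k(\Sigma_{x,\lambda}^u)}\le C\,\mathcal{N}(x,\lambda)\,\|u_{x,\lambda}-u\|_{L^k(\Sigma_{x,\lambda}^u)},
\]
and similarly for $\|v_{x,\lambda}-v\|_{L^s(\Sigma_{x,\lambda}^v)}$, with
\[
\mathcal{N}(x,\lambda)=\|v_{x,\lambda}\|^{\kappa-1}_{L^{n(\kappa-1)/\alpha}(\Sigma_{x,\lambda}^v)}\,\|u_{x,\lambda}\|^{\theta-1}_{L^{(n-1)(\theta-1)/(\alpha-1)}(\Sigma_{x,\lambda}^u)}.
\]

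The main obstacle is to show $\mathcal{N}(x,\lambda)\to 0$ as $\lambda\to 0^+$. In the critical case this is immediate from the conformal invariance of $L^{2n/(n-\alpha)}$ under the Kelvin transform. In the subcritical case the change of variables $\eta\mapsto\eta^{x,\lambda}$ instead gives
\[
\int_{\Sigma_{x,\lambda}^n}v_{x,\lambda}^{n(\kappa-1)/\alpha}\,d\eta=\int_{B_\lambda^+(x)}\Bigl(\frac{\lambda}{|y-x|}\Bigr)^{2n-n(\kappa-1)(n-\alpha)/\alpha}v^{n(\kappa-1)/\alpha}(y)\,dy,
\]
together with its boundary analogue, and the weight exponent is nonnegative (and singular at $y=x$) in the strict subcritical range, so a naive extension of the critical argument fails. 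I would handle this by a further H\"older split, pairing the singular factor $|y-x|^{-\gamma}$ (which lies in $L^{q'}_{loc}$ for $q'<n/\gamma$) against $v^{n(\kappa-1)/\alpha}$ in a higher local Lebesgue space supplied by a regularity bootstrap in the spirit of Theorem \ref{Regularity-2}. The resulting bound is of the form $C\lambda^{n/q'}$ times a local norm of $v$ on $B_\lambda^+(x)$, and similarly for $u$, both of which vanish as $\lambda\to 0^+$ by the absolute continuity of the integral. With this in place, choosing $\lambda_0(x)>0$ so that $C\,\mathcal{N}(x,\lambda)\le 1/2$ on $(0,\lambda_0(x))$ forces $\|u_{x,\lambda}-u\|_{L^k(\Sigma_{x,\lambda}^u)}=\|v_{x,\lambda}-v\|_{L^s(\Sigma_{x,\lambda}^v)}=0$, which is the conclusion of the lemma.
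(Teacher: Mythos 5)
Your proposal reproduces the paper's argument faithfully up to and including the contraction estimate: the observation that $\tau_1,\tau_2\ge 0$ so the Kelvin weights can be dropped, the mean value inequality, and the pairing of \eqref{HLSD-2}, \eqref{HLSD-3} with H\"older at the exponents $n(\kappa-1)/\alpha$ and $(n-1)(\theta-1)/(\alpha-1)$ are exactly the steps leading to \eqref{K-13}--\eqref{K-16}. The divergence is at the smallness of $\mathcal{N}(x,\lambda)$. The paper disposes of this in one line, replacing $\|v_{x,\lambda}\|_{L^{n(\kappa-1)/\alpha}(\Sigma^v_{x,\lambda})}$ by $\|v\|_{L^{n(\kappa-1)/\alpha}(B^+_\lambda(x))}$ (and likewise for $u$) and then invoking the local-integrability hypotheses of Theorem \ref{sys-nonexistence}; your change-of-variables computation showing that this replacement carries a weight $\big(\lambda/|y-x|\big)^{2n-(n-\alpha)n(\kappa-1)/\alpha}\ge 1$, singular at $y=x$ in the strictly subcritical range, is correct and points at a real step that the paper passes over without comment.

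The difficulty is that your proposed repair does not close. First, the H\"older split requires the weight exponent $\gamma=2n-(n-\alpha)\,n(\kappa-1)/\alpha$ to satisfy $\gamma<n$, i.e. $\kappa>n/(n-\alpha)$; the theorem allows any $\kappa\in(1,(n+\alpha)/(n-\alpha)]$, so for $\kappa\le n/(n-\alpha)$ there is no admissible $q'\ge 1$ with $|y-x|^{-\gamma}\in L^{q'}_{loc}$, and the analogous obstruction occurs in the boundary integral for small $\theta$. Second, the ``regularity bootstrap in the spirit of Theorem \ref{Regularity-2}'' is not available here: that theorem is proved under the critical balance \eqref{k-t} and the hypothesis $u\in L^{\theta+1}_{loc}$, whereas Theorem \ref{sys-nonexistence} assumes only $(u,v)\in L^{(n-1)(\theta-1)/(\alpha-1)}_{loc}\times L^{n(\kappa-1)/\alpha}_{loc}$ for a subcritical pair $(\theta,\kappa)$; you would need to establish the higher local integrability of $u$ and $v$ from these weaker data, and nothing in your sketch does so. As written, then, the final step is a genuine gap: you must either justify the passage to $\|v\|_{L^{n(\kappa-1)/\alpha}(B^+_\lambda(x))}$ and $\|u\|_{L^{(n-1)(\theta-1)/(\alpha-1)}(B^{n-1}_\lambda(x))}$ directly by estimating the weighted integrals over $B^+_\lambda(x)$ and $B^{n-1}_\lambda(x)$ under the stated hypotheses, or supply a complete alternative; the H\"older-plus-bootstrap route, as sketched, fails on part of the admissible parameter range and rests on unproved regularity.
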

\begin{proof}  The proof is similar to that of Lemma \ref{K-lm-1}.
 Since $\kappa\le\frac{n+\alpha}{n-\alpha}$, it is obvious that $\tau_1\ge0,$ and $\big(\frac{\lambda}{|\eta-x|}\big)^{\tau_1}\le1$ for $\eta\in\Sigma_{x,\lambda}^n.$ Thus,
for any $\xi\in\Sigma_{x,\lambda}^{u}$, using \eqref{K-3} and mean value theorem it has
\begin{eqnarray*}
0\le u_{x,\lambda}(\xi)-u(\xi)&=&\int_{\Sigma_{x,\lambda}^n}P(x,\lambda;\xi,\eta)
\big[\big(\frac{\lambda}{|\xi-x|}\big)^{\tau_1}v_{x,\lambda}^{\kappa}(\eta)-v^{\kappa}(\eta)\big]d\eta,\\
&\le&\int_{\Sigma_{x,\lambda}^n}\frac{v_{x,\lambda}^{\kappa}(\eta)-v^{\kappa}(\eta)}
{|\xi-\eta|^{n-\alpha}}d\eta,\\
&\le&\kappa\int_{\Sigma_{x,\lambda}^v}\frac{v_{x,\lambda}^{\kappa-1}(\eta)(v_{x,\lambda}(\eta)-v(\eta))
}{|\xi-\eta|^{n-\alpha}}d\eta.
\end{eqnarray*}
For any $t\in(1,\frac n\alpha),$ take $k=\frac{(n-1)t}{n-\alpha t}, s=\frac{nt}{n-\alpha t}$.  Similar to Lemma \ref{K-lm-1}, integrating on
$\partial\mathbb{R}^{n}_+$ and using inequality \eqref{HLSD-3} and H\"{o}lder inequality,  from the above inequality, we have
\begin{eqnarray}\label{K-13}
\|u_{x,\lambda}-u\|_{L^k(\Sigma_{x,\lambda}^{u})} &\le&c\|v_{x,\lambda}^{\kappa-1}(v_{x,\lambda}-v)\|_{L^t(\Sigma_{x,\lambda}^v)}\nonumber\\
&\le&c\|v_{x,\lambda}\|^{\kappa-1}_{L^{\frac{n(\kappa-1)}{\alpha}}(\Sigma_{x,\lambda}^v)}
\|v_{x,\lambda}-v\|_{L^s(\Sigma_{x,\lambda}^v)}.
\end{eqnarray}
On the other hand, since $\theta\le\frac{n+\alpha-2}{n-\alpha}, $ it holds $\tau_2\ge0,$ and
$\big(\frac{\lambda}{|\xi-x|}\big)^{\tau_2}\le1$ for $\xi\in\Sigma_{x,\lambda}^{n-1}.$   Thus, for any
$\eta\in\Sigma_{x,\lambda}^{v}$, it  follows from \eqref{K-4} that
\begin{eqnarray*}
v_{x,\lambda}(\eta)-v(\eta) &\le&\theta\int_{\Sigma_{x,\lambda}^u}\frac{u_{x,\lambda}^{\theta-1}(\xi)(u_{x,\lambda}(\xi)-u(\xi))
}{|\eta-\xi|^{n-\alpha}}d\xi,
\end{eqnarray*}
 Since $s>\frac{n}{n-\alpha}$ for any $t\in(1,\frac n\alpha)$, we can use HLS inequality \eqref{HLSD-2} and H\"{o}lder inequality to arrive at
 \begin{eqnarray}\label{K-14}
\|v_{x,\lambda}-v\|_{L^s(\Sigma_{x,\lambda}^v)}
&\le&c\|u_{x,\lambda}^{\theta-1}(u_{x,\lambda}-u)\|_{L^{\frac{(n-1)s}{n+(\alpha-1)s}}
(\Sigma_{x,\lambda}^u)}\nonumber\\
&\le&c\|u_{x,\lambda}\|^{\theta-1}_{L^{\frac{(n-1)(\theta-1)}{\alpha-1}} (\Sigma_{x,\lambda}^u)}\|u_{x,\lambda}-u\|_{L^{k}
(\Sigma_{x,\lambda}^u)}.
\end{eqnarray}
 Combining \eqref{K-13} with \eqref{K-14}, we obtain
\begin{eqnarray}
\|u_{x,\lambda}-u\|_{L^k(\Sigma_{x,\lambda}^{u})}
&\le&c\|v_{x,\lambda}\|^{\kappa-1}_{L^{\frac{n(\kappa-1)}{\alpha}}(\Sigma_{x,\lambda}^v)}
\|u_{x,\lambda}\|^{\theta-1}_{L^{\frac{(n-1)(\theta-1)}{\alpha-1}} (\Sigma_{x,\lambda}^u)}\|u_{x,\lambda}-u\|_{L^{k}
(\Sigma_{x,\lambda}^u)}\nonumber\\
&\le&c\|v\|^{\kappa-1}_{L^{\frac{n(\kappa-1)}{\alpha}}(B^+_\lambda(x))}
 \|u\|^{\theta-1}_{L^{\frac{(n-1)(\theta-1)}{\alpha-1}}
(B^{n-1}_\lambda(x))}\|u_{x,\lambda}-u\|_{L^{k} (\Sigma_{x,\lambda}^u)}.\nonumber\\ \label{K-15}
\end{eqnarray}
Similarly, we have
\begin{eqnarray}
\|v_{x,\lambda}-v\|_{L^s(\Sigma_{x,\lambda}^v)} &\le&c\|v \|^{\kappa-1}_{L^{\frac{n(\kappa-1)}{\alpha}}(B^+_\lambda(x))}
 \|u \|^{\theta-1}_{L^{\frac{(n-1)(\theta-1)}{\alpha-1}}
(B^{n-1}_\lambda(x))}\|v_{x,\lambda}-v\|_{L^s(\Sigma_{x,\lambda}^v)}.\nonumber\\
\label{K-16}
\end{eqnarray}
Since $u\in L_{loc}^{\frac{(n-1)(\theta-1)}{\alpha-1}}(\partial\mathbb{R}^{n}_+)$ and $v\in
L_{loc}^{\frac{n(\kappa-1)}{\alpha}}(\mathbb{R}^{n}_+),$ we can choose $\lambda_0$ small enough such that $0<\lambda<\lambda_0$, and
\[c\|v \|^{\kappa-1}_{L^{\frac{n(\kappa-1)}{\alpha}}(B^+_\lambda(x))}
 \|u \|^{\theta-1}_{L^{\frac{(n-1)(\theta-1)}{\alpha-1}}
(B^{n-1}_\lambda(x))}\le\frac12.
\]
Combining the above with \eqref{K-15} and \eqref{K-16}, we get
\begin{eqnarray*}
\|u_{x,\lambda}-u\|_{L^k(\Sigma_{x,\lambda}^{u})}&\le&\frac12\|u_{x,\lambda}-u\|_{L^k(\Sigma_{x,\lambda}^{u})},\\
\|v_{x,\lambda}-v\|_{L^s(\Sigma_{x,\lambda}^v)} &\le&\frac12\|v_{x,\lambda}-v\|_{L^s(\Sigma_{x,\lambda}^v)},
\end{eqnarray*}
which imply $\|u_{x,\lambda}-u\|_{L^k(\Sigma_{x,\lambda}^{u})}=\|v_{x,\lambda}-v\|_{L^s(\Sigma_{x,\lambda}^v)}=0$. Thus, both
$\Sigma_{x,\lambda}^{u}$ and $\Sigma_{x,\lambda}^v$ have measure zero.  We complete the proof of the lemma.
\end{proof}

Define
\[\bar{\lambda}(x)=sup\{\mu>0\,|\,u_{x,\lambda}(\xi)\le u (\xi), \mbox{and} \ v_{x,\lambda}(\eta)\le v
(\eta), \  \forall \lambda\in (0, \mu), \forall \xi\in\Sigma_{x,\lambda}^{n-1}, \forall \eta\in \Sigma_{x,\lambda}^{n}\}.
\]
We will show  the sphere will never stop.

\begin{lm}\label{K-lm-no-2} $\bar{\lambda}(x)=\infty$ for all $x\in\partial\mathbb{R}^{n}_+.$
\end{lm}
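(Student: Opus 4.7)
The plan is to argue by contradiction. Suppose $\bar\lambda := \bar\lambda(x_0) < \infty$ for some $x_0 \in \partial\mathbb{R}^n_+$; by translation I take $x_0 = 0$. The crucial feature of the subcritical regime is that the hypothesis $\theta + \kappa < (2n+2\alpha-2)/(n-\alpha)$ is equivalent to $\tau_1 + \tau_2 > 0$, so at least one of $\tau_1, \tau_2$ is strictly positive; assume $\tau_2 > 0$ (the case $\tau_1 > 0$ runs along the same lines). By the definition of $\bar\lambda$ and continuity in $\lambda$, I have $u_{0,\bar\lambda} \le u$ on $\Sigma^{n-1}_{0,\bar\lambda}$ and $v_{0,\bar\lambda} \le v$ on $\Sigma^n_{0,\bar\lambda}$. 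Inserting $u^\theta_{0,\bar\lambda} \le u^\theta$ into identity \eqref{K-4} and exploiting $(\bar\lambda/|\xi|)^{\tau_2} < 1$ strictly on $\Sigma^{n-1}_{0,\bar\lambda}$, together with $u > 0$ and $P > 0$ strictly on the relevant set, I would conclude $v_{0,\bar\lambda}(\eta) < v(\eta)$ strictly on $\Sigma^n_{0,\bar\lambda}$; feeding this back through \eqref{K-3} with $\tau_1 \ge 0$ then yields $u_{0,\bar\lambda}(\xi) < u(\xi)$ strictly on $\Sigma^{n-1}_{0,\bar\lambda}$ as well.

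With strict inequality at $\bar\lambda$ secured, I would mimic the perturbation scheme from the proof of Lemma \ref{K-lm-2}. Fix large $R$ and small $\delta > 0$; continuity of $u, v$ and of their Kelvin transforms at $\bar\lambda$ yields constants $c_1, c_2 > 0$ with $u - u_{0,\bar\lambda} \ge c_1$ on $\Sigma^{n-1}_{0,\bar\lambda+\delta} \cap B^{n-1}_R$ and $v - v_{0,\bar\lambda} \ge c_2$ on $\Sigma^n_{0,\bar\lambda+\delta} \cap B^+_R$. By continuity in $\lambda$ there exists $\varepsilon \in (0,\delta)$ such that the same inequalities persist for every $\lambda \in [\bar\lambda, \bar\lambda+\varepsilon)$ on these compact sets. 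Consequently, the exceptional sets $\Sigma^u_{0,\lambda}$ and $\Sigma^v_{0,\lambda}$ are confined to the union of the narrow annulus $\Sigma^{n-1}_{0,\lambda} \setminus \Sigma^{n-1}_{0,\bar\lambda+\delta}$ (resp.\ $\Sigma^n_{0,\lambda} \setminus \Sigma^n_{0,\bar\lambda+\delta}$) and the exterior of $B^{n-1}_R$ (resp.\ $B^+_R$).

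I would close the argument by repeating the HLS-plus-H\"older chain \eqref{K-15}--\eqref{K-16} of Lemma \ref{K-lm-no-1}, restricting the integrals on the right-hand sides to the Kelvin reflections of these exceptional sets. Such reflections sit inside $B^+_\lambda(0)$ and $B^{n-1}_\lambda(0)$ with Lebesgue measures that can be driven to zero by first choosing $R$ large and then $\delta, \varepsilon$ small. Combining this with the hypotheses $u \in L^{(n-1)(\theta-1)/(\alpha-1)}_{loc}(\partial\mathbb{R}^n_+)$ and $v \in L^{n(\kappa-1)/\alpha}_{loc}(\overline{\mathbb{R}^n_+})$ together with absolute continuity of the Lebesgue integral, the coupling product appearing in \eqref{K-15}--\eqref{K-16} is forced below $1/2$, yielding $\|u_{0,\lambda} - u\|_{L^k(\Sigma^u_{0,\lambda})} = \|v_{0,\lambda} - v\|_{L^s(\Sigma^v_{0,\lambda})} = 0$. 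Thus $u_{0,\lambda} \le u$ and $v_{0,\lambda} \le v$ hold almost everywhere for some $\lambda > \bar\lambda$, contradicting the maximality of $\bar\lambda$.

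The main obstacle I anticipate lies precisely in the final measure-shrinkage step: one must verify that the two pieces of the exceptional set — the Kelvin image of the thin annulus and the Kelvin image of the exterior of $B_R$ — simultaneously have controllable $L^p$ norms of $u$ and $v$, uniformly for $\lambda \in [\bar\lambda, \bar\lambda+\varepsilon)$, as one sends $R \to \infty$ followed by $\delta, \varepsilon \to 0$. What distinguishes this subcritical argument from the critical cousin in Lemma \ref{K-lm-2} is that here the strict inequality at $\bar\lambda$ is delivered essentially for free by the factor $(\bar\lambda/|\xi|)^{\tau_2} < 1$, whereas in the critical case $\tau_1 = \tau_2 = 0$ one must extract strict inequality from the positivity of $P$ by a rigidity argument. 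Once past this opening hurdle, the perturbation-and-contraction step is structurally identical to the one already carried out.
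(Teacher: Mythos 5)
Your proposal is correct and follows essentially the same route as the paper: argue by contradiction, obtain strict inequality at $\bar\lambda$ from the fact that at least one of $\tau_1,\tau_2$ is positive (so the factor $(\lambda/|\cdot|)^{\tau_i}<1$ does the work that the rigidity argument does in the critical case), then run the perturbation-and-confinement step of Lemma \ref{K-lm-2} together with the contraction estimates \eqref{K-15}--\eqref{K-16} to push the sphere slightly past $\bar\lambda$, contradicting maximality. The paper's own proof is a terse sketch of exactly this argument, so your more detailed write-up is a faithful elaboration rather than a different method.
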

\begin{proof} We prove it by contradiction argument. Suppose the contrary, there exists some $x_0\in\partial\mathbb{R}^{n}_+$ such that $\bar{\lambda}(x_0)<\infty.$
By the definition of $\bar{\lambda}$,
\begin{eqnarray*}
u_{x_0,\bar{\lambda}}(\xi)&\le&u(\xi)\quad
\text{for}~\xi\in\Sigma_{x_0,\bar{\lambda}}^{n-1},\\
v_{x_0,\bar{\lambda}}(\eta)&\le&v(\eta)\quad \text{for}~\eta\in \Sigma_{x_0,\bar{\lambda}}^{n}.
\end{eqnarray*}
From \eqref{K-3} and \eqref{K-4} with $x=x_0,\lambda=\bar{\lambda}$ and the fact that at least one of $\tau_1$ and $\tau_2$ is positive, we have
\begin{eqnarray*}
u_{x_0,\bar{\lambda}}(\xi)&<&u(\xi)\quad
\text{for}~\xi\in\Sigma_{x_0,\bar{\lambda}}^{n-1},\\
v_{x_0,\bar{\lambda}}(\eta)&<&v(\eta)\quad \text{for}~\eta\in \Sigma_{x_0,\bar{\lambda}}^{n}.
\end{eqnarray*}
Similar to proof process of Lemma \ref{K-lm-1}, for $
\lambda\in[\bar{\lambda},\bar{\lambda}+\varepsilon) $, we can conclude that
$\Sigma_{x_0,\lambda}^{u}$ and
 $\Sigma_{x_0,\lambda}^v$ must have measure zero. Thus we obtain that
\begin{eqnarray*}
u_{x_0, {\lambda}}(\xi)&\le&u(\xi)\quad
\text{for}~\xi\in\Sigma_{x_0, {\lambda}}^{n-1},\\
v_{x_0, {\lambda}}(\eta)&\le&v(\eta)\quad \text{for}~\eta\in \Sigma_{x_0, {\lambda}}^{n}
\end{eqnarray*}for $
\lambda\in[\bar{\lambda},\bar{\lambda}+\varepsilon) $,  which contradicts the definition of $\bar{\lambda}$.
\end{proof}

\textbf{Proof of Theorem \ref{sys-nonexistence}.}
According to
Lemma \ref{K-lm-no-2},  $\bar{\lambda}(x)=\infty$ for all
$x\in\partial \mathbb{R}^n_+$, that is, for all $\lambda>0$ and $x\in\partial \mathbb{R}^n_+$,
\begin{eqnarray*}
u_{x, {\lambda}}(\xi)&\le&u(\xi)\quad
\text{for}~\xi\in\Sigma_{x, {\lambda}}^{n-1},\\
v_{x, {\lambda}}(\eta)&\le&v(\eta)\quad \text{for}~\eta\in
\Sigma_{x, {\lambda}}^{n}.
\end{eqnarray*} As being shown in the proof of Theorem
\ref{sys-critical}, this is impossible. This
completes the proof. \hfill$\Box$

 \vskip 1cm
\noindent {\bf Acknowledgements}\\
\noindent 
M. Zhu would like to thank Y.Y. Li and E. H. Lieb for their interests in the paper and their comments related to Lemma 3.5 and 3.6. He also thanks R. Frank and P. Yang for valuable discussions that lead to the general extension of sharp HLS inequality.
This paper is written while J. Dou was visiting the Department of
Mathematics, The University of Oklahoma. He thanks the department
for its hospitality. The work of J.Dou is partially supported by the
National Natural Science Foundation of China (Grant No. 11101319) and
CSC project for visiting The University of Oklahoma.
\small

\end{document}